\theoremstyle{plain}
\newtheorem{theorem}{Theorem}[section]
\newtheorem{corollary}[theorem]{Corollary}
\newtheorem{lemma}[theorem]{Lemma}
\newtheorem{proposition}[theorem]{Proposition}
\theoremstyle{definition}
\newtheorem{defi}[theorem]{Definition}
\theoremstyle{remark}
\newtheorem{obs}[theorem]{Remark}
\newtheorem{ex}[theorem]{Example}
\newcommand{\cat}{\mathcal}
\newcommand{\C}{{\cat C}}
\newcommand{\D}{{\cat D}}
\newcommand{\M}{{\cat M}}
\newcommand{\Nmodel}{{\cat N}}
\newcommand{\calL}{{\mathcal L}}
\newcommand{\calS}{{\mathcal S}}
\newcommand{\DF}{{\mathrm{F}}}
\newcommand{\colim}{\mathop{\text{\rm colim}}}
\newcommand{\hocolim}{\mathop{\text{\rm hocolim}}}
\newcommand{\Ho}{{\text{\rm Ho}}}
\newcommand{\map}{{\text{\rm map}}}
\newcommand{\Map}{{\text{\rm Map}}}
\newcommand{\Hom}{{\text{\rm Hom}}}
\newcommand{\End}{{\text{\rm End}}}
\newcommand{\Ssets}{\text{\rm sSet}}
\newcommand{\Spectra}{\text{\rm Sp}}
\newcommand{\HoSp}{\text{\rm Ho({Sp})}}
\newcommand{\RMod}{\text{$R$-{\rm Mod}}}
\newcommand{\SMod}{\text{$S$-\rm Mod}}
\newcommand{\EMod}{\text{$E$-{\rm Mod}}}
\newcommand{\Gpd}{\text{\rm Gpd}}
\newcommand{\ZZ}{\mathbb{Z}}
\newcommand{\Ass}{A}
\newcommand{\Com}{E}
\newcommand{\FQ}{F}
\newcommand{\LH}{{\mathcal L}}
\numberwithin{equation}{section}
\begin{document}

\title{Comparing localizations across adjunctions}
\author{Carles Casacuberta}
\address{Institut de Matem\`atica, Facultat de Matem\`atiques i Inform\`atica, Universitat de Barcelona (UB),
Gran Via de les Corts Catalanes 585, 08007 Barcelona, Spain}
\email{carles.casacuberta@ub.edu}
\author{Oriol Ravent\'os}
\address{Fakult\"{a}t f\"{u}r Mathematik, Universit\"{a}t Regensburg,
93040 Regensburg, Germany}
\email{oriol.raventos-morera@ur.de}
\author{Andrew Tonks}
\address{Department of Mathematics, University of Leicester,
Leicester LE1 7RH, United Kingdom}
\email{apt12@leicester.ac.uk}
\date{}

\thanks{The authors were partially supported by the Spanish Ministry of Economy and Competitiveness under grants MTM2010-15831, MTM2013-42178-P, AEI/FEDER grant MTM2016-76453-C2-2-P, and grant MDM-2014-0445 awarded to the Barcelona Graduate School of Mathematics, as well as by Generalitat de Catalunya as members of the 2009~SGR~119, 2014~SGR~114, and 2017~SGR~585 research groups. The second-named author was supported by the project CZ.1.07/2.3.00/20.0003 of the Operational Programme Education for Competitiveness of the Ministry of Education, Youth and Sports of the Czech Republic and by grant SFB 1085 (\emph{Higher Invariants}) of the German Research Foundation.}
\subjclass[2010]{Primary 55P60, 18A40; Secondary 55P48}
\keywords{Homotopy, localization, cellularization, adjunction, monad, comonad, operad}

\begin{abstract}
We show that several apparently unrelated formulas involving left or right Bousfield localizations in homotopy theory are induced 
by comparison maps associated with pairs of adjoint functors. Such comparison maps are used in the article to discuss the existence of functorial liftings of homotopical localizations and cellularizations to categories of algebras over monads acting on model categories, with emphasis on the cases of module spectra and algebras over simplicial operads. Some of our results hold for algebras up to homotopy as well;
for example, if $T$ is the reduced monad associated with a simplicial operad and $f$ is any map of pointed simplicial sets, then $f$\nobreakdash-loc\-al\-iza\-tion coincides with $Tf$-localization on spaces underlying homotopy $T$-algebras, and similarly for cellularizations.
\end{abstract}

\maketitle

\vspace*{-0.3cm}

\section{Introduction}

Preservation of structures such as loop spaces, infinity loop spaces, or module spectra under homotopical localizations or cellularizations has been studied using Segal's theory of loop spaces \cite{Bo94,DF}, operads \cite{CGMV,Javier3,White,WY1,WY2}, algebraic theories \cite{Badzioch,Bergner}, or other methods \cite{Bo96,Bo99,CRT,Javier4}. Monads and their algebras lie behind many of these approaches. However, although the existence of liftings of localizations or colocalizations to categories of algebras over monads has been proved in various special cases, functoriality of such liftings has only been addressed recently in \cite{BW,GRSO,WY3} as well as in the present article.

This article emerged from the observation that some formulas involving localizations or cellularizations in homotopy theory share common patterns not previously revealed. The formulas that we consider ---some of which are well known while others are new--- contain pairs of adjoint functors in some way or another.

As a first example, it was shown by Farjoun in~\cite[Theorem~3.A.1]{DF} that for every pointed connected space $X$ and every basepoint-preserving map $f$ there is a weak equivalence
\begin{equation}
\label{LfOmega}
L_f \Omega X\simeq \Omega L_{\Sigma f} X,
\end{equation}
where $L_f$ denotes localization with respect to $f$ in the homotopy category of pointed spaces, $\Omega$ is the loop functor, and $\Sigma$ denotes suspension. Similarly, Bousfield proved in \cite[Theorem~2.10]{Bo96} that for every spectrum $X$ and every pointed connected space $A$ one has
\begin{equation}
\label{LfOmegainfty}
P_A \Omega^{\infty} X\simeq \Omega^{\infty} P_{\Sigma^{\infty}A}X,
\end{equation}
where $P_A$ is $A$-nullification (that is, localization with respect to the map $A\to *$) and $\Sigma^{\infty}$ is the canonical functor from the homotopy category of pointed spaces to the homotopy category of spectra, while $\Omega^{\infty}$ is its right adjoint.
In Section~\ref{homotopicalstructures} we prove that, as one would expect, the formula \eqref{LfOmegainfty} holds for all $f$-localizations, not only nullifications. 

In a different context, it was shown in \cite[Theorem~1.3]{CRT} that
\begin{equation}
\label{SPinf}
L_f X\simeq L_{SP^{\infty}f} X
\end{equation}
for every basepoint-preserving map $f$ and every commutative topological monoid~$X$, where $SP^{\infty}$ is the infinite symmetric product~\cite{DT}. The stable analogue of this fact is the statement that, if $H\ZZ$ denotes the spectrum that represents homology with integral coefficients, then
\begin{equation}
\label{HZeta}
L_fX\simeq L_{H\ZZ\wedge f}X
\end{equation}
for every $f$ and every spectrum $X$ splitting as a product of Eilenberg--Mac\,Lane spectra. 

Another seemingly unrelated fact was observed in~\cite[Corollary~4.3]{CGT}, namely that there is a natural homomorphism of groupoids
\begin{equation}
\label{piLf}
\pi L_f X\longrightarrow L_{\pi f} (\pi X)
\end{equation}
for all spaces $X$ and every map~$f$, where $\pi$ is the fundamental groupoid. This homomorphism is almost never an isomorphism, yet it becomes an isomorphism after applying $L_{\pi f}$ to~it.

Further details about these examples and many others are given in the article.
Our encompassing approach is based on a study of \emph{comparison maps} of type
\begin{equation}
\label{alphabeta}
\alpha\colon FL_f\longrightarrow L_{Ff}F \qquad \text{and} \qquad
\beta\colon L_fG\longrightarrow GL_{Ff}
\end{equation}
for each Quillen pair of adjoint functors $F$ and $G$ between model categories. In fact $\alpha$ and $\beta$ form a pair of \emph{mates} as in~\cite{Shulman}.
Such comparison maps arise very frequently in practice but are equivalences only in favorable cases. It is remarkable that $\alpha$ becomes an equivalence after applying $L_{Ff}$ to it,
as in~\eqref{piLf}, while $\beta$ does not share this feature.

The formulas \eqref{LfOmega}, \eqref{LfOmegainfty}, \eqref{SPinf} and \eqref{HZeta} are ``of $\beta$ type'' while \eqref{piLf} is ``of $\alpha$ type''. 

Among the new formulas that we obtain by means of comparison maps we emphasize the following ones. For a cofibrant ring spectrum $E$ and a map $f$ of spectra, if $f$-localization commutes with suspension or $E$ is connective, then there is a natural equivalence
\begin{equation}
\label{Elef}
L_fM\simeq L_{E\wedge f}M
\end{equation}
for every left $E$-module $M$. Similarly, for a given spectrum~$A$,
\begin{equation}
\label{CeA}
C_AM\simeq C_{E\wedge A}M
\end{equation}
for all left $E$-modules $M$ if $A$-cellularization commutes with suspension or if $E$ is connective. These results generalize and dualize~\eqref{HZeta}.

The content of the article may be summarized as follows.

\subsection{Comparison maps}

We start with a purely categorical study of natural transformations such as $\alpha$ and $\beta$ in~\eqref{alphabeta}, relating localizations or colocalizations defined in two distinct categories linked by pairs of adjoint functors. This is the core technique of the article, dating back to a preliminary draft written in 2006,
which was gradually adapted for its use in a homotopy-theoretical context. Our task was made feasible by results about transferred model structures on categories of algebras over monads (or algebras over operads) from articles that appeared in the meantime.

\subsection{Preservation of (co)algebras under (co)localizations}

Our main result in Section~\ref{cosection} provides necessary and sufficient conditions under which a localization on a category preserves algebras over a monad defined on the same category. This result uses comparison maps \eqref{alphabeta} associated with the Eilenberg--Moore adjunction of the given monad.

The central finding, inspired by results in~\cite{CRT}, is that, if a monad $T$ and a localization $L$ are defined on the same category, then \emph{$T$ preserves $L$-equivalences if and only if $L$ preserves $T$\nobreakdash-alg\-ebras}. This fact is made precise in Theorem~\ref{monad} and is reminiscent to results in~\cite{Beck}, where distributivity between monads was first discussed.

Theorem~\ref{monad} can be dualized in two different ways. One obvious way consists of passing to opposite categories ---this yields conditions under which colocalizations preserve coalgebras over comonads. The other way is achieved by means of orthogonality between objects and morphisms, leading to conditions under which colocalizations preserve algebras over monads or localizations preserve coalgebras over comonads.

\subsection{Lifting (co)localizations to model categories of algebras}

We subsequently address a homotopy-theoretical version of the results in Section~\ref{cosection}, hence providing necessary and sufficient conditions under which a localization or a colocalization on a model category $\M$ lifts to a category of algebras $\M^T$ over a monad $T$ acting on~$\M$, assuming that the category $\M^T$ admits a model structure transferred from~$\M$; that is, one for which the forgetful functor $U\colon\M^T\to\M$ creates weak equivalences and fibrations. Existence of such transferred model structures (or semi model structures) has been discussed in several articles, such as \cite{BB,BM2, PS,Spitzweck}. Our main results in this part of the article (Theorem~\ref{cofumfumfum} and its dual counterpart, Theorem~\ref{fumfumfum}) were obtained in collaboration with Javier Guti\'errez and David White, who arrived at similar conclusions in \cite{BW,GRSO,WY1}. 

It follows from results in Section~\ref{homotopicalstructures} below or from \cite{BW} that a homotopical $f$-localization lifts to a category of $T$-algebras admitting a transferred model structure if and only if the forgetful functor $U$ sends $Ff$-equivalences to $f$-equivalences, where $F$ is Quillen left adjoint to~$U$, assuming that $L_{Ff}$ exists ---in fact, in this case $L_{Ff}$ is a lift of $L_f$ to $T$-algebras. Similarly, $A$\nobreakdash-cell\-ulariza\-tion lifts to $T$-algebras if and only if the forgetful functor $U$ sends $FA$-cellular objects to $A$-cellular objects, and in this case $C_{FA}$ is a lift of $C_A$ to $T$-algebras, assuming the existence of~$C_{FA}$.

We apply Theorem~\ref{cofumfumfum} and Theorem~\ref{fumfumfum} to modules over a cofibrant ring spectrum $E$ and to algebras over simplicial operads. In both cases, transferred model structures are known to exist. However, there is a crucial distinction: in the case of $E$-module spectra the associated monad $TX=E\wedge X$ preserves all colimits, while in the case of algebras over a simplicial operad $P$ the corresponding monad (whose algebras are the $P$-algebras) only preserves sifted colimits. Therefore our treatment of the case of module spectra is easier.

\subsection{Localizations and cellularizations of module spectra}

We infer that, if $E$ is a cofibrant ring spectrum (in the category of symmetric spectra over simplicial sets), then $f$-localizations and $A$-cellularizations lift to the category of left $E$-modules assuming that they commute with suspension or that $E$ is connective. This was first shown in \cite{CGMV,Javier3} by viewing $E$-module spectra as algebras over a suitable two-colour operad.

Furthermore, if $L_f$ (or $C_A$) commutes with suspension or $E$ is connective, then for every left $E$-module $M$ there are natural equivalences
\begin{align}
\label{formula1}
L_fUM & \simeq L_{U(E\wedge f)}UM\simeq UL_{E\wedge f}M \\[0.1cm]
\label{formula2}
C_{A}UM &\simeq C_{U(E\wedge A)}UM\simeq UC_{E\wedge A}M
\end{align}
that explain and refine \eqref{Elef} and~\eqref{CeA}.

Statements and proofs of claims about cellularizations turn out to be formally analogous to those about localizations, since passage from $L_f$ to $C_A$ involves exchanging objects with morphisms or conversely. However, as our results are formulated in terms of right-induced model structures on categories of algebras over monads, cellularizations tend to behave better than localizations. For example, right Bousfield localizations with respect to objects commute with forming Eilenberg--Moore categories of monads (Theorem~\ref{MATMTFA} below) while left Bousfield localizations with respect to morphisms need not do so (Example~\ref{shocking}).

\subsection{Algebras over simplicial operads}

In Section~\ref{operads} we recover and extend results of Bousfield and Farjoun about interaction of localizations or cellularizations with loop spaces and infinite loop spaces, such as \eqref{LfOmega} and~\eqref{LfOmegainfty}. For this, we prove that 
Theorem~\ref{cofumfumfum} and Theorem~\ref{fumfumfum} apply to model categories of algebras over simplicial operads acting on pointed simplicial sets.
Let us emphasize that our main result in this section (Theorem~\ref{maybe}) not only states that localizations and cellularizations preserve algebras over simplicial operads, but they do it in a functorial way, that is, yielding localization functors and cellularization functors on algebras. In this respect, our approach shares insight with~\cite{GRSO}.

\newpage

\subsection{Algebras up to homotopy}

Examples hinted that some of the above conclusions did not really require the existence of transferred model structures on categories of algebras over monads, but held equally well for algebras up to homotopy, i.e., algebras over the derived monad in the homotopy category. 

For example, the equivalences $L_fM\simeq L_{E\wedge f}M$ and $C_{A}M\simeq C_{E\wedge A}M$ still hold when $M$ is a \emph{homotopy} $E$-module. More generally, we found that
\[
L_fX\simeq L_{Tf}X \qquad \text{and} \qquad C_{A}X\simeq C_{TA}X
\]
if $X$ underlies a homotopy $T$-algebra, provided that $T$ preserves $f$-equivalences and $Tf$\nobreakdash-equiv\-alences, or, correspondingly, $A$-cellular spaces and $TA$-cellular spaces. These assumptions on $T$ are automatically satisfied if $T$ is the reduced monad associated with a simplicial operad acting on pointed simplicial sets. Our proof of this fact relies on arguments used by Farjoun in related parts of~\cite{DF}.

As consequences we infer that
\[
L_f X\simeq L_{\Omega\Sigma f} X \qquad \text{and} \qquad
L_f X\simeq L_{\Omega^{\infty}\Sigma^{\infty}f} X,
\]
assuming in each case that $X$ is a homotopy algebra over the corresponding monad, namely $\Omega\Sigma$ in the first case (whose algebras are the homotopy associative $H$-spaces) and $\Omega^{\infty}\Sigma^{\infty}$ in the second case (whose algebras are called \emph{$H_{\infty}$-spaces}). The second one can be generalized~as
\[
L_f X\simeq L_{\Omega^{\infty}(E\wedge\Sigma^{\infty}f)} X
\]
for algebras over the monad $TX=\Omega^{\infty}(E\wedge\Sigma^{\infty}X)$, where $E$ is any connective ring spectrum. This is an unstable analogue of the equivalence $L_fX\simeq L_{E\wedge f}X$ for homotopy $E$-modules, and if $E=H\ZZ$ then one recovers the formula~\eqref{SPinf}, since $\Omega^{\infty}(H\ZZ\wedge\Sigma^{\infty}X)\simeq SP^{\infty}X$.
 
All these formulas remain valid if $L_f$ is replaced with cellularization $C_A$ with respect to some pointed connected space~$A$.
Most of our results are true, more generally, for localizations with respect to collec\-tions of maps (possibly proper classes) and for colocalizations with respect to collections of objects, whenever these exist. 

Our results in Section~\ref{homotopicalstructures} are conceptually related with the content of \cite[\S\,4]{BCL}, where it is shown that the derived localization of a differential graded algebra $A$ at a set of homogeneous homology classes coincides with the derived localization of $A$ as a left $A$-module. 

Equivalences such as \eqref{formula1} and \eqref{formula2} probably hold, with suitable assumptions on~$E$, for other stable model categories, for example motivic symmetric spectra over any base scheme~$B$, assuming the existence of Bousfield localizations ---this is the case when $B$ is Noetherian and of finite Krull dimension~\cite{DRO}. In this context, \eqref{formula2} may be relevant in the study of the derived category ${\rm DM}(k)$ of motives over a field $k$ of characteristic zero, which is equivalent, according to \cite{RO}, to the homotopy category of modules over the spectrum $M\ZZ$ that represents motivic cohomology for the given field~$k$. 
Note that \eqref{formula2} with $A=S$ (the motivic sphere spectrum) implies that $E$ is $S$-cellular, which is a strong restriction~\cite{DI}. 
Hence, $S$\nobreakdash-cell\-ularity of $E$ is necessary for \eqref{formula2} to hold for all~$A$, perhaps also sufficient.

\vspace*{0.2cm}

\subsection*{Acknowledgements}

We appreciate many conversations with Javier Guti\'er\-rez and David White on the subject of this paper. Some of our results in Section~\ref{homotopicalstructures} have been obtained through exchanges of ideas with them.
The first-named author also benefited from remarks by Boris Chorny, Fernando Muro and Jos\'e Luis Rodr\'{\i}guez, and the second-named author wants to acknowledge useful discussions with Ilias Amrani, John Bourke, George Raptis, and Alexandru Stanculescu. The hospitality of the Max-Planck-Institut f\"ur Mathematik in Bonn and the Institut Mittag-Leffler, where parts of this article were written, is also gratefully acknowledged.

\newpage

\section{Adjunctions, monads and comonads}
\label{Admocomo}

This section contains standard terminology and basic facts that will be used in the article.
More information can be found in \cite[Chapters IV and~VI]{MacLane}.
If $\C$ and $\D$ are categories, we denote by
$F : {\C} \rightleftarrows {\D} : G$
a pair of adjoint functors, with $F$ left adjoint and $G$ right adjoint,
meaning that there are natural bijections of morphism sets
\begin{equation}
\label{adjunction}
{\D}(FX,Y)\cong{\C}(X,GY)
\end{equation}
for $X$ in $\C$ and $Y$ in~$\D$.
We denote by $\varphi^{\rm t}\colon X\to GY$ the adjunct of a morphism
\mbox{$\varphi\colon FX\to Y$} under~\eqref{adjunction} and, similarly,
$\psi^{\rm t}\colon FX\to Y$ denotes the adjunct of \mbox{$\psi\colon X\to GY$}. Adjuncts of identities yield natural transformations
$\eta\colon {\rm Id}\to GF$ (called \emph{unit}) and \mbox{$\varepsilon\colon FG\to {\rm Id}$}
(called \emph{counit}), which, in turn, determine the adjunction
by $\psi^{\rm t}=\varepsilon_Y\circ F\psi$ and $\varphi^{\rm t}=G\varphi\circ\eta_X$.

Passage to opposite categories transforms an adjunction $F : {\C} \rightleftarrows {\D} : G$ into another adjunction $G : {\D}^{\rm op} \rightleftarrows {\C}^{\rm op} : F$, where $G$ is now the left adjoint and the former unit becomes the counit, and conversely,
since $\C(X,GFX)=\C^{\rm op}(GFX,X)$ for all~$X$.

If $F:\C\rightleftarrows \D:G$ is a pair of adjoint functors,
then $F$ is a retract of $FGF$ and $G$ is a retract of~$GFG$.
This follows from the \emph{triangle identities}
\begin{equation}
\label{retraction}
\varepsilon_{FX}\circ F\eta_X=(\eta_X)^{\rm t}={\rm id}_{FX} \quad \text{and} \quad
G\varepsilon_Y\circ\eta_{GY}=(\varepsilon_Y)^{\rm t}={\rm id}_{GY}
\end{equation}
for all objects $X$ and $Y$.
Moreover, if we consider the full subcategories
\[
\C_{\eta} =\{X\in\C\mid\eta_X\colon X\cong GFX\}, \quad
\D_{\varepsilon} =\{Y\in\D\mid\varepsilon_Y\colon FGY\cong Y\},
\]
then $F$ and $G$ restrict to an equivalence of categories
\begin{equation}
\label{Leinster}
F:\C_{\eta}\rightleftarrows \D_{\varepsilon}:G.
\end{equation}

A \emph{monad} on a category $\C$ is a triple $(T,\eta,\mu)$ where $T\colon\C\to\C$ is a functor and $\eta\colon{\rm Id}\to T$ (the \emph{unit}) and $\mu\colon TT\to T$ (the \emph{multiplication}) are natural transformations such that
\[
\mu\circ T\mu=\mu\circ\mu T \quad\text{and}\quad \mu\circ T\eta=\mu\circ\eta T={\rm Id}_T.
\]
A~monad $(T,\eta,\mu)$ is called \emph{idempotent} if $\mu$ is an isomorphism,
which we then omit from the notation.
If $F:{\C}\rightleftarrows{\D}:G$ is an adjunction with unit $\eta$ and counit $\varepsilon$, then $(GF,\eta,G\varepsilon F)$ is a monad. In fact, all monads are of this form, in a non-unique way.

If $(T,\eta,\mu)$ is a monad on a category~$\C$, then a \emph{$T$\nobreakdash-algebra} is a pair $(X,a)$ with $a\colon TX\to X$ such that
$a\circ Ta=a\circ\mu_X$ and
$a\circ\eta_X={\rm id}_X$.
A~morphism of $T$\nobreakdash-algebras $(X,a)\to (Y,b)$
is a morphism $\varphi\colon X\to Y$ in $\C$
such that $\varphi\circ a=b\circ T\varphi$.
Thus, the $T$\nobreakdash-algebras form a category, denoted by
${\C}^T$ and called the \emph{Eilenberg--Moore category} of~$T$, which fits into an adjunction
\begin{equation}
\label{EilenbergMoore}
F:\C\rightleftarrows{\C}^T:U
\end{equation}
where $FX=(TX,\mu_X)$
and $U(X,a)=X$.
This adjunction is terminal among all adjunctions whose associated monad is~$T$.

A \emph{comonad} on a category $\C$ is a monad on the opposite category $\C^{\rm op}$. We denote a comonad by $(T,\varepsilon,\delta)$ where $\varepsilon\colon T\to{\rm Id}$ is the \emph{counit} and $\delta\colon T\to TT$ is the \emph{comultiplication}. A \emph{coalgebra} over $(T,\varepsilon,\delta)$ is a pair $(X,a)$ with $a\colon X\to TX$ such that $Ta\circ a=\delta_X\circ a$ and $\varepsilon_X\circ a={\rm id}_X$. If $F:{\D}\rightleftarrows{\C}:G$ is an adjunction with unit $\eta$ and counit $\varepsilon$, then $(FG,\varepsilon,F\eta G)$ is a comonad. The category $\C_T$ of $T$\nobreakdash-coalgebras provides an initial adjunction
$U:{\C}_T\rightleftarrows\C:G$ among those yielding~$T$, where $GX=(TX,\delta_X)$ for all~$X$; cf.\ \cite[\S\,2]{HS}.

A full subcategory $\calS$ of a category $\C$ is \emph{reflective} if
the inclusion $J$ is part of an adjunction
$K : {\C} \rightleftarrows {\calS} : J$. In this case, the counit $KJ\to {\rm Id}$ is an isomorphism and the functor $L=JK$ is called a \emph{reflector} or a \emph{localization} on~$\C$. If we denote the unit by $l\colon {\rm Id}\to L$, then $(L,l)$ is an idempotent
monad. An object of $\C$ is called \emph{$L$\nobreakdash-local} if it is isomorphic to an object in the subcategory~$\calS$; hence, $X$ is $L$\nobreakdash-local if and only if $l_X\colon X\to LX$ is an isomorphism. A~morphism $g\colon U\to V$ is an \emph{$L$\nobreakdash-equivalence} if $Lg$ is an isomorphism, or, equivalently, if
for all $L$\nobreakdash-local objects $X$ composition with $g$ induces a bijection
\begin{equation}
\label{orthogonality}
{\C}(V,X)\cong{\C}(U,X).
\end{equation}
Conversely, the $L$\nobreakdash-local objects are precisely those $X$ for
which \eqref{orthogonality} holds for all $L$\nobreakdash-equiva\-lences $g\colon U\to V$; see \cite{Adams1} for further details.

Dually, a full subcategory $\calS$ of a category $\D$ is \emph{coreflective} if
the inclusion $J$ has a right adjoint
$J : {\calS} \rightleftarrows {\D} : K$. Then the
functor $C=JK$ is called a \emph{coreflector} or a \emph{colocalization} on~$\D$, and in this case the unit ${\rm Id}\to KJ$ is an isomorphism. If we denote the counit by $c\colon C\to {\rm Id}$, then $(C,c)$ is an idempotent comonad. An object of $\D$ is called \emph{$C$\nobreakdash-colocal} if it is isomorphic to an object in~$\calS$; thus, $X$ is $C$\nobreakdash-colocal if and only if $c_X\colon CX\to X$ is an isomorphism. A~morphism $g\colon U\to V$ is a \emph{$C$\nobreakdash-equivalence} if $Cg$ is an isomorphism, or, equivalently, if for all $C$\nobreakdash-colocal objects $X$ composition with $g$ induces a bijection
\begin{equation}
\label{coorthogonality}
{\D}(X,U)\cong{\D}(X,V).
\end{equation}
The $C$\nobreakdash-colocal objects are precisely those $X$ for which \eqref{coorthogonality} holds for all $C$\nobreakdash-equiv\-alences $g\colon U\to V$. In fact, colocalizations are localizations on the opposite category.

\section{Comparison morphisms}

Suppose given a localization $L_1$ on a category~$\C_1$ and a localization $L_2$ on a category~$\C_2$.
We say that a functor $F\colon {\C}_1\to{\C}_2$ \emph{preserves local objects} if $FX$ is $L_2$\nobreakdash-local for every $L_1$\nobreakdash-local object~$X$, and we say that $F$ \emph{preserves equivalences} if $Ff$ is an $L_2$\nobreakdash-equivalence whenever $f$ is an $L_1$\nobreakdash-equivalence.
We also say that a functor $F\colon{\C}_1\to{\C}_2$ \emph{reflects local objects} if, for an object $X$ of~$\C_1$, the assertion that $FX$ is $L_2$\nobreakdash-local implies that $X$ is $L_1$\nobreakdash-local. Similarly, $F$ \emph{reflects equivalences} if $f$ is an $L_1$\nobreakdash-equivalence whenever $Ff$ is an $L_2$\nobreakdash-equivalence.
The same terminology will be used for colocalizations.

\begin{lemma}
\label{adjprop}
Let $F : {\C}_1 \rightleftarrows {\C}_2 : G$ be a pair of adjoint functors.
\begin{itemize}
\item[(a)]
If localizations on $\C_1$ and $\C_2$ are given, then $G$ preserves local objects if and only if $F$ preserves equivalences.
\item[(b)]
If colocalizations on $\C_1$ and $\C_2$ are given, then $F$ preserves colocal objects if and only if $G$ preserves equivalences.
\end{itemize}
\end{lemma}

\begin{proof}
This follows from the definitions, in view of the commutative diagram
\[
\xymatrix{
\C_2(FB,X)\ar[r] \ar[d]_\cong & \C_2(FA,X) \ar[d]^\cong \\
\C_1(B,GX)\ar[r] & \C_1(A,GX)}
\]
for a morphism $A\to B$ in $\C_1$ and an object $X$ in~$\C_2$,
where the vertical bijections are given by the adjunction. Part~(b) is deduced from part~(a) by passing to the opposite categories.
\end{proof}

\begin{theorem}
\label{alfabeta}
Let $F\colon {\C}_1\to {\C}_2$ be a functor.
Let $L_1$ be a localization on $\C_1$ with unit~$l_1$
and $L_2$ a localization on $\C_2$ with unit~$l_2$.
Then the following hold:
\begin{itemize}
\item[{\rm (i)}]
$F$ preserves equivalences if and only if there is a natural transformation
\[ \alpha\colon FL_1\longrightarrow L_2F \]
such that $\alpha\circ Fl_1=l_2F$.
If this holds, then $\alpha$ is unique and
 $\alpha_X$ is an $L_2$\nobreakdash-equiv\-al\-ence for all~$X$. Moreover, $\alpha$ is an isomorphism if and only if $F$ preserves local objects.
\item[{\rm (ii)}]
$F$ preserves local objects if and only if there is a natural transformation
\[ \beta\colon L_2F\longrightarrow FL_1 \]
such that $\beta\circ l_2F=Fl_1$.
If this holds, then
$\beta$ is unique, and it is an isomorphism if and only if $F$ preserves equivalences.
\end{itemize}
\end{theorem}

\begin{proof}
For every $X$ in ${\C}_2$, the morphism $(l_1)_X\colon X\to L_1X$
is an $L_1$\nobreakdash-equivalence. Therefore, if $F$ preserves equivalences,
then $F(l_1)_X$ is an $L_2$\nobreakdash-equivalence.
Hence it induces a natural bijection
\[
{\C}_2(FL_1X, L_2FX)\cong {\C}_2(FX ,L_2FX)
\]
and $\alpha_X$ is uniquely defined by the equality $\alpha_X\circ F(l_1)_X=(l_2)_{FX}$.
Since $(l_2)_{FX}$ and $F(l_1)_X$ are both $L_2$\nobreakdash-equivalences,
$\alpha_X$ is also an $L_2$\nobreakdash-equivalence.
In order to prove that $\alpha$ is a natural transformation, we need to check
that $\alpha_Y\circ FL_1f$ is equal to $L_2Ff\circ\alpha_X$ for every $f\colon X\to Y$.
This follows from the equality
\[
L_2Ff\circ\alpha_X\circ F(l_1)_X=\alpha_Y\circ FL_1f\circ F(l_1)_X,
\]
using the fact that $F(l_1)_X$ is an $L_2$\nobreakdash-equivalence and $L_2FY$ is $L_2$\nobreakdash-local.

Next we assume that there is a natural transformation $\alpha$ such that $\alpha\circ Fl_1=l_2F$ and infer that $F$ preserves equivalences. For this purpose, let $f\colon X\to Y$ be an $L_1$-equivalence. Since $(l_2)_{FY}$ is an $L_2$-equivalence and $L_2FX$ is $L_2$-local, there is a unique $s\colon L_2FY\to L_2FX$ such that $s\circ (l_2)_{FY}=\alpha_X\circ(FL_1f)^{-1}\circ F(l_1)_Y$. In order to prove that $Ff$ is an $L_2$-equiv\-alence it suffices to check that $s$ is an inverse of $L_2Ff$. This is deduced from the equalities
\[
s\circ L_2Ff\circ(l_2)_{FX}=(l_2)_{FX} \quad \text{and} \quad
L_2Ff\circ s \circ(l_2)_{FY}=(l_2)_{FY}.
\]

If $F$ preserves equivalences, then the equality $\alpha_X\circ F(l_1)_X=(l_2)_{FX}$ implies that $\alpha_X$ is an $L_2$\nobreakdash-equivalence, and therefore $\alpha_X$ is an isomorphism if and only if $FL_1X$ is $L_2$\nobreakdash-local. Hence $\alpha$ is an isomorphism if and only if $F$ sends $L_1$-local objects to $L_2$-local objects. This completes the proof of part~(i).
The proof of part~(ii) is similar.
\end{proof}

\begin{corollary}
\label{LUUL}
Let $F\colon {\C}_1\to {\C}_2$ be a functor.
For a localization $L_1$ on $\C_1$ and a localization $L_2$ on~$\C_2$,
the functors $FL_1$ and $L_2F$ are naturally isomorphic if and only if
$F$ preserves local objects and equivalences. In this case,
$\alpha\colon FL_1\to L_2F$ and $\beta\colon L_2F\to FL_1$ are mutually inverse isomorphisms.
\end{corollary}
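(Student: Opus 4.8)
The plan is to deduce everything from Theorem~\ref{theorem1} by a simple logical manipulation of the equivalences established there. First I would prove the ``if'' direction: assume $F$ preserves both local objects and equivalences. Since $F$ preserves equivalences, part~(i) of Theorem~\ref{theorem1} gives a natural transformation $\alpha\colon FL_1\to L_2F$ with $\alpha\circ Fl_1=l_2F$; since $F$ also preserves local objects, the last clause of part~(i) tells us that $\alpha$ is an isomorphism. This already shows $FL_1$ and $L_2F$ are naturally isomorphic. Symmetrically, part~(ii) provides a natural transformation $\beta\colon L_2F\to FL_1$ with $\beta\circ l_2F=Fl_1$, which is an isomorphism because $F$ preserves equivalences. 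For the ``only if'' direction, suppose $L_2F$ and $FL_1$ are naturally isomorphic by some isomorphism $\theta$. I would first check that $F$ preserves equivalences: if $f\colon U\to V$ is an $L_1$-equivalence then $L_1f$ is an isomorphism, hence $FL_1f$ is an isomorphism, hence $L_2Ff$ is an isomorphism by transporting along $\theta$, so $Ff$ is an $L_2$-equivalence. Then, $F$ preserving equivalences, part~(i) yields the canonical $\alpha$, and since $FL_1\cong L_2F$, every $FL_1X$ is isomorphic to the $L_2$-local object $L_2FX$, so $\alpha$ is an isomorphism; by the final clause of part~(i) this forces $F$ to preserve local objects.

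It remains to identify $\alpha$ and $\beta$ as mutually inverse. Here I would argue that $\beta\circ\alpha$ and the identity of $FL_1$ are two natural endomorphisms of $FL_1$ that agree after precomposition with $Fl_1$: indeed $\beta\circ\alpha\circ Fl_1=\beta\circ l_2F=Fl_1=\mathrm{id}\circ Fl_1$. Since $Fl_1=F(l_1)_X$ is an $L_2$-equivalence for every $X$ (this was established inside the proof of Theorem~\ref{theorem1}) and $FL_1X\cong L_2FX$ is $L_2$-local, precomposition with $(Fl_1)_X$ is injective on maps into $FL_1X$; hence $\beta_X\circ\alpha_X=\mathrm{id}_{FL_1X}$. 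The reverse composite $\alpha\circ\beta=\mathrm{id}_{L_2F}$ then follows because $\alpha$ (or $\beta$) is already known to be an isomorphism, so a one-sided inverse is automatically two-sided.

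The only genuinely delicate point is making sure the uniqueness clauses are invoked correctly, so that the ``some isomorphism $\theta$'' hypothesis in the ``only if'' direction can be upgraded to the statement that the \emph{canonical} maps $\alpha$ and $\beta$ are isomorphisms; but this is exactly what the last sentences of parts~(i) and~(ii) of Theorem~\ref{theorem1} provide, once we know $F$ preserves equivalences and local objects respectively. Everything else is formal, so I do not expect any real obstacle; the proof is essentially a bookkeeping exercise combining the two halves of Theorem~\ref{theorem1}.
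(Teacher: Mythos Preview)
Your proposal is correct and follows essentially the same approach as the paper: both arguments reduce everything to Theorem~\ref{theorem1}, use the natural isomorphism $FL_1\cong L_2F$ to read off preservation of local objects and equivalences in the ``only if'' direction, and verify that $\alpha$ and $\beta$ are mutual inverses by precomposing with a unit map and invoking orthogonality. The only cosmetic differences are that the paper checks $\alpha\circ\beta=\mathrm{id}$ via $l_2F$ (rather than $\beta\circ\alpha=\mathrm{id}$ via $Fl_1$) and deduces preservation of local objects before preservation of equivalences, but these are interchangeable.
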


\begin{proof}
The ``if'' part follows from Theorem~\ref{alfabeta}.
For the converse, note that if $FL_1\cong L_2F$ then $F$ preserves
local objects, and the naturality of the isomorphism adds the fact
that $F$ preserves equivalences, since, for a morphism $f$,
we have that $L_2Ff$ is an isomorphism if and only if $FL_1f$ is an isomorphism.
Furthermore, if $F$ preserves local objects and equivalences,
then the equality $\alpha\circ\beta\circ l_2F=l_2F$ implies that $\alpha\circ\beta={\rm id}$.
\end{proof}

\begin{corollary}
\label{creating}
Suppose that a functor $U\colon {\C}_1\to {\C}_2$ reflects isomorphisms. For a localization $L_1$ with unit $l_1$ on $\C_1$ and a localization $L_2$ with unit $l_2$ on~$\C_2$, if $UL_1$ and $L_2U$ are naturally isomorphic then the following hold:
\begin{itemize}
\item[{\rm (i)}] $U$ preserves and reflects local objects and equivalences.
\item[{\rm (ii)}] There is a natural isomorphism $\alpha\colon UL_1\to L_2U$ such that $\alpha\circ Ul_1=l_2U$, and a natural isomorphism $\beta\colon L_2U\to UL_1$ such that $\beta\circ l_2U=Ul_1$.
\end{itemize}
Moreover, the isomorphisms in {\rm (ii)} are unique and inverse to each other.
\end{corollary}

\begin{proof}
Corollary~\ref{LUUL} tells us that $U$ preserves local objects and equivalences, and that $\alpha\colon UL_1\to L_2U$ and $\beta\colon L_2U\to UL_1$, as given by Theorem~\ref{alfabeta}, are inverse isomorphisms.
To prove that $U$ reflects local objects, suppose that $UX$ is $L_2$\nobreakdash-local. Then $(l_2)_{UX}$ is an isomorphism. Since $\alpha\circ Ul_1=l_2U$ and $\alpha_X$ is an isomorphism, we infer that $U(l_1)_X$ is an~iso\-morphism and therefore so is $(l_1)_X$ since $U$ reflects isomorphisms. Hence, $X$ is $L_1$\nobreakdash-local, as needed.
The fact that $U$ reflects equivalences follows from the equality
\[
UL_1g\circ\beta_X =\beta_Y\circ L_2Ug
\]
for every $g\colon X\to Y$, together with the fact that $U$ reflects isomorphisms.
\end{proof}

\begin{ex}
For a cocomplete category $\C$ and a small category $I$, choose $F$ to be the colimit functor
$
\colim_I\colon \C^I\to \C,
$
where $\C^I$ denotes the category of functors $I\to\C$.
Let $L$ be a localization on $\C$ and extend it objectwise over $\C^I$; that is, for each $X\colon I\to \C$, define $(LX)_i=L(X_i)$ for all $i\in I$. Then the diagonal functor $\C\to\C^I$ preserves local objects and equivalences. Since it preserves local objects, the colimit functor preserves equivalences by Lemma~\ref{adjprop}. Therefore Theorem~\ref{alfabeta} yields a natural morphism
\begin{equation}
\label{colim}
\alpha\colon \colim_{i\in I} LX_i \longrightarrow L\Big(\colim_{i\in I} X_i\Big)
\end{equation}
which is an $L$-equivalence, that is, it induces an isomorphism
\[
L\Big(\colim_{i\in I} LX_i\Big)\cong L\Big(\colim_{i\in I}X_i\Big).
\]
This is an instance of the well-known fact that left adjoints preserve colimits.

Since the diagonal functor $\C\to\C^I$ also preserves equivalences, if $\C$ is complete then the limit functor preserves local objects. This yields a natural morphism
\[
\beta\colon L\Big(\lim_{i\in I} X_i\Big)\longrightarrow \lim_{i\in I} LX_i
\]
which is rarely an equivalence of any type.
However, for every colocalization $C$ there is an isomorphism
\[
C\Big(\lim_{i\in I}X_i\Big)\cong C\Big(\lim_{i\in I} CX_i\Big)
\]
for each $X\colon I\to \C$, as a special case of the next result, which follows from Theorem~\ref{alfabeta} by passing to the opposite categories.
\end{ex}

\begin{theorem}
\label{coalfabeta}
Let $G\colon {\C_2}\to {\C_1}$ be a functor.
Let $C_1$ be a colocalization on $\C_1$ with counit~$c_1$
and $C_2$ a colocalization on $\C_2$ with counit~$c_2$.
Then the following hold:
\begin{itemize}
\item[{\rm (i)}]
$G$ preserves equivalences if and only if there is a natural transformation
\[ \alpha\colon C_1G\longrightarrow GC_2 \]
such that $Gc_2\circ\alpha =c_1G$.
If this holds, then $\alpha$ is unique and $\alpha_X$ is a $C_1$\nobreakdash-equivalence for all~$X$. Moreover, $\alpha$ is an isomorphism if and only if $G$ preserves colocal objects.
\item[{\rm (ii)}]
$G$ preserves colocal objects if and only if there is a natural transformation
\[ \beta\colon GC_2\longrightarrow C_1G \]
such that $c_1 G\circ\beta=G c_2$.
If this holds, then $\beta$ is unique, and it
is an isomorphism if and only if $G$ preserves equivalences.
\end{itemize}
\end{theorem}

The duals of Corollary~\ref{LUUL} and Corollary~\ref{creating} hold as well.
As a special case of Corollary~\ref{LUUL} and its dual, we obtain conditions for commutativity of a localization and a colocalization.

\begin{corollary}
\label{LC}
If $L$ is a localization and $C$ is a colocalization on the same category, then the following statements are equivalent:
\begin{itemize}
\item[{\rm (a)}]
There is a natural isomorphism $LC\cong CL$.
\item[{\rm (b)}]
$L$ preserves $C$-colocal objects and $C$-equivalences.
\item[{\rm (c)}]
$C$ preserves $L$-local objects and $L$-equivalences.
\end{itemize}
\end{corollary}

\section{Induced (co)localizations on (co)algebras}
\label{cosection}

If often happens that a localization or a colocalization preserves a certain subcategory.
For example, every localization on the category of groups preserves abelian groups \cite{Kortrijk}.
As we next point out, in such cases the restriction is also a localization or a colocalization.
The claim that an endofunctor $F$ on a category $\C$ \emph{preserves} a subcategory $\calS$ means that $FX$ is in $\calS$ for every object $X$ of $\calS$ and $Ff$ is in $\calS$ for every morphism $f$ of~$\calS$. If $\calS$ is full, then the second condition is implied by the first one.

\begin{proposition}
\label{restriction}
Let $\calS$ be a full subcategory of a category $\C$.
\begin{itemize}
\item[(a)]
If a localization $L$ on~$\C$ preserves $\calS$ then $L$ restricts to a localization on~$\calS$, and the inclusion $\calS\hookrightarrow\C$ preserves and reflects local objects and equivalences.
\item[(b)]
If a colocalization $C$ on~$\C$ preserves $\calS$ then $C$ restricts to a colocalization on~$\calS$, and the inclusion $\calS\hookrightarrow\C$ preserves and reflects colocal objects and equivalences.
\end{itemize}
\end{proposition}

\begin{proof}
For (a), consider the full subcategory $\calL$ of $\calS$ consisting of all $L$\nobreakdash-local objects of $\C$ that are in~$\calS$. Then, for each object $X$ in $\calS$, the morphism $l_X\colon X\to LX$
is in $\calS$ by assumption, and, for each $Y$ in~$\calL$, it induces a bijection
\[
\calS(LX,Y)=\C(LX,Y)\cong\C(X,Y)=\calS(X,Y),
\]
so $L$ restricts indeed to a reflection of $\calS$ onto~$\calL$ such that the inclusion
preserves and reflects local objects. If $f\colon X\to Y$ is a morphism in~$\calS$,
then $Lf$ is an isomorphism in $\calS$ if and only if it is an isomorphism in~$\C$,
since the inclusion reflects isomorphisms. Hence, the inclusion also preserves and reflects equivalences. Passage to the opposite category yields~(b).
\end{proof}

In a similar spirit, the following result enhances \cite[Theorem~1.2]{CRT} by adding the remark that, if a localization sends algebras over a monad to algebras over the same monad, then it yields in fact a localization on the category of such algebras. The noteworthy principle contained in this theorem is that, for a monad $T$ and a localization $L$ on the same category, \emph{$T$ preserves $L$-equivalences if and only if $L$ preserves $T$-algebras}.

\begin{theorem}
\label{monad}
Let $(T,\eta,\mu)$ be a monad on a category $\C$ and let $L$ be a localization on $\C$ with unit $l$.
Let $U$ be the forgetful functor from the category $\C^T$ of $T$-algebras to~$\C$. Then the following
statements are equivalent:
\begin{itemize}
\item[{\rm (a)}] $T$ preserves $L$\nobreakdash-equivalences.
\item[{\rm (b)}] For every $T$\nobreakdash-algebra
$(X,a)$ there is a $T$\nobreakdash-algebra structure $\tilde a\colon TLX\to LX$ on $LX$ such that $l_X\colon (X,a)\to (LX,\tilde a)$ is a morphism of $T$\nobreakdash-algebras, and if $g\colon (X,a)\to (Y,b)$ is any morphism of $T$\nobreakdash-algebras then so is~$Lg\colon (LX,\tilde a)\to (LY,\tilde b)$.
\item[{\rm (c)}] There is a localization $L^T$ on $\C^T$ with unit $l^T$ such that $LU=UL^T$ and $lU=Ul^T$.
\item[{\rm (d)}] There is a localization $L^T$ on $\C^T$ together with a natural isomorphism $LU\cong UL^T$.
\end{itemize}

If these statements hold, then the $T$-algebra structures $\tilde a\colon TLX\to LX$ given in part~{\rm (b)} are unique and the localization $L^T$ in~{\rm (c)} is also unique. Moreover, $U$ preserves and reflects local objects and equivalences.
\end{theorem}

\begin{proof}
We first show that (a) $\Rightarrow$ (b).
To obtain $\tilde a\colon TLX\to LX$ with $\tilde a\circ Tl_X=l_X\circ a$, we use the fact that $LX$ is $L$\nobreakdash-local and $Tl_X$ is an $L$\nobreakdash-equivalence by assumption, and note that $\tilde a$ is then unique. The relation $\tilde a\circ T\tilde a=\tilde a \circ\mu_{LX}$ follows, as in the proof of \cite[Theorem~1.2]{CRT}, from the equality
\begin{align*}
\tilde a\circ T\tilde a\circ TTl_X & =
\tilde a\circ Tl_X\circ Ta =
l_X\circ a \circ Ta \\ & =
l_X\circ a\circ \mu_X =
\tilde a\circ Tl_X\circ \mu_X=
\tilde a\circ\mu_{LX}\circ TTl_X,
\end{align*}
since $TTl_X$ is an $L$-equivalence. Similarly, we infer that $\tilde a\circ\eta_{LX}={\rm id}_{LX}$ from the equality $\tilde a\circ\eta_{LX}\circ l_X=l_X$.
Given a morphism of $T$-algebras $g\colon (X,a)\to (Y,b)$, we find that
\[
Lg\circ\tilde a\circ Tl_X = Lg\circ l_X\circ a = l_Y \circ g \circ a = l_Y\circ b\circ Tg =
\tilde b\circ Tl_Y\circ Tg = \tilde b\circ TLg\circ Tl_X,
\]
which implies that $Lg\circ\tilde a=\tilde b\circ TLg$, as claimed.

Next we prove that (b) $\Rightarrow$ (c).
For each $T$\nobreakdash-algebra $(X,a)$, let us define $L^T(X,a)=(LX,\tilde a)$, where $\tilde a$ is given by assumption. Thus,
\[
UL^T(X,a)=LX=LU(X,a)
\]
for every $X$ and all~$a\colon TX\to X$. Moreover, we set $l^T=l$, so we have indeed $lU=Ul^T$.
For a morphism of $T$-algebras $g\colon (X,a)\to (Y,b)$, we define $L^Tg=Lg$, which is a morphism of $T$-algebras by assumption. Hence $UL^Tg=LUg$, and $L^T$ is a functor because so is~$L$.

To check that $L^T$ is a localization, suppose given any morphism $g\colon (X,a)\to (Y,b)$ of $T$\nobreakdash-algebras, and suppose that $Y$ is $L$\nobreakdash-local. Then there is a unique
$g'\colon LX\to Y$ in $\C$ such that $g'\circ l_X=g$, and the equality $l_Y\circ g'\circ l_X=Lg\circ l_X$ implies that $l_Y\circ g'=Lg$ as well. We need to prove that $g'$ is also a morphism of $T$\nobreakdash-algebras, that is, that $g'\circ\tilde a$ is equal to $b\circ Tg'$.
For this, we use the fact that $Lg$ is a morphism of $T$-algebras by assumption to infer that
\[
l_Y\circ g'\circ\tilde a=
Lg\circ\tilde a=\tilde b\circ TLg=\tilde b\circ Tl_Y\circ Tg'=
l_Y\circ b\circ Tg',
\]
and then we use the fact that $l_Y$ is an isomorphism since $Y$ is $L$-local.

The implication (c) $\Rightarrow$ (d) is trivial. Finally, we prove that (d) $\Rightarrow$ (a). Suppose that a natural isomorphism $LU\cong UL^T$ is given. Then Corollary~\ref{creating} tells us that $U$ preserves and reflects local objects and equivalences. Now write $T=UF$ where $FX=(TX,\mu_X)$, and note that $F$ preserves equivalences by Lemma~\ref{adjprop}. Since $U$ also preserves equivalences, so does~$T$, hence yielding~(a). 
\end{proof}

When condition (c) of Theorem~\ref{monad} is satisfied, we say that $L^T$ is a \emph{lifting} of~$L$ to the category $\C^T$ of $T$-algebras. This notion was already considered by Beck in~\cite{Beck}, where it was shown that the existence of a lifting of $L$ to $\C^T$ is in fact equivalent to the existence of a \emph{distributive law of $T$ over~$L$}, that is, a natural transformation $\lambda\colon TL\to LT$ subject to the conditions $\lambda\circ \eta L=L\eta$, $\lambda\circ T l =lT$, and $\lambda\circ \mu L =L\mu \circ \lambda T\circ T \lambda$ (the fourth condition in \cite{Beck} is automatic in our case since $L$ is idempotent). Under these conditions, $LT$ becomes a monad by means of~$\lambda$. Indeed, it follows from part~(i) of Theorem~\ref{alfabeta} that, if $L^T$ is a lifting of $L$ to $\C^T$, then there is a unique natural transformation
\[
\alpha\colon FL\longrightarrow L^TF
\]
under~$F$, where $FX=(TX,\mu_X)$, and thus $\lambda=U\alpha$ is a distributive law of $T$ over~$L$.

The condition that $T$ preserves $L$\nobreakdash-equivalences holds for all localizations $L$ whenever $\C(T-,-)=\Phi\circ\C(-,-)$ for some functor $\Phi$, as in the next example.

\begin{ex}
\label{ex3}
If $\varphi\colon R\to S$ is a central homomorphism of unital rings and $S$ is viewed as an $(R,R)$-bimodule via~$\varphi$, then there is a natural isomorphism
\begin{equation}
\label{rs}
\Hom_R(S\otimes_R M,N)\cong \Hom_R(S,\Hom_R(M,N))
\end{equation}
for any two left $R$-modules $M$ and $N$. Here $TM=S\otimes_R M$ defines a monad on the category $\RMod$ of left $R$-modules, whose algebras are the left $S$-modules. If $L$ is any localization on~$\RMod$, then $T$ preserves $L$-equivalences, since \eqref{rs} yields
\[
\Hom_R(Tf,N)\cong\Hom_R(S,\Hom_R(f,N))
\]
for every $L$-local $R$-module $N$ and every $L$-equivalence $f$ between $R$-modules. Therefore, $L$ lifts to the category $\SMod$ of left $S$-modules. This result generalizes \cite[Theorem~4.3]{CRT}.
\end{ex}

\begin{ex}
\label{AE}
Let $\HoSp$ be the homotopy category of spectra and let $E$ be a homotopy ring spectrum, that is, a monoid in~$\HoSp$. Then $TX=E\wedge X$ defines a monad on~$\HoSp$, whose algebras are left homotopy $E$-module spectra. 

If $A$ is any spectrum, then an \emph{$A_*$-equivalence} is a map of spectra $f\colon X\to Y$ such that $A\wedge f\colon A\wedge X\to A\wedge Y$ is an isomorphism in~$\HoSp$. It was proved in~\cite{BoLoc} that  there is an \emph{$A_*$\nobreakdash-loc\-alization} functor $(-)_A$ on $\HoSp$ whose class of equivalences is precisely the class of $A_*$-equivalences. Since $T$ preserves $A_*$-equivalences, it follows from Theorem~\ref{monad} that $(-)_A$ lifts to the category of $E$-module spectra, as already pointed out in \cite{Bo99,CG}. 
The resulting functor annihilates precisely the $A_*$-acyclic $E$-modules, that is, those $E$-modules $M$ such that $A\wedge M=0$. But the $A_*$-acyclic $E$-modules coincide with the $(E\wedge A)_*$-acyclic ones, since $A\wedge M=0$ implies $E\wedge A\wedge M=0$ and, conversely, if $M$ is an $E$-module with structure map $m\colon E\wedge M\to M$ and we assume that $E\wedge A\wedge M=0$, then the composite
\[
\xymatrix@+0.5cm{A\wedge M \ar[r]^-{A\wedge\eta_M} & A\wedge E\wedge M \ar[r]^-{A\wedge m} & A\wedge M}
\]
is zero and, since it is also the identity map, we conclude that $A\wedge M=0$. This argument yields that
\begin{equation}
\label{MAMEA}
M_A\cong M_{E\wedge A}
\end{equation}
for every left homotopy $E$-module $M$ and every spectrum~$A$. This result was first proved in \cite[Proposition~3.2]{Javier2}, where it was used to obtain a complete description of homological localizations of stable GEMs by choosing $E=H\ZZ$.
\end{ex}

If the monad $(T,\eta,\mu)$ is idempotent, as in the next example, then the conditions of Theorem~\ref{monad} are, in their turn, equivalent to the condition that $L$ preserves the full subcategory of $T$\nobreakdash-local objects, and in this case the lifting $L^T$ is just the restriction of $L$ to this subcategory.

\begin{ex}
If $L$ is any localization on the category of groups, then, as explained in \cite[Theorem~2.2]{Kortrijk},
$L$ preserves abelian groups and hence restricts to the full subcategory of these. Therefore there is a natural group homomorphism
\begin{equation}
\label{abelianization}
\alpha_G\colon (LG)_{\rm ab}\longrightarrow L(G_{\rm ab})
\end{equation}
and a natural isomorphism
$
L((LG)_{\rm ab})\cong L(G_{\rm ab})
$
for all groups $G$ and every localization~$L$. We note, however, that (\ref{abelianization}) is far from being an isomorphism in general. For instance, let $P$ be a set of primes and let $P'$ denote its complement. A~group $G$ is \emph{uniquely $P'$-divisible} if the map $x\mapsto x^q$ is bijective in $G$ for every $q\in P'$. If $l_G\colon G\to G_P$ denotes a universal homomorphism from $G$ into a uniquely $P'$-divisible group, then $(-)_P$ is a localization on the category of groups whose restriction to abelian groups is tensoring with $\ZZ_P$. As shown in~\cite{Baumslag}, if $F$ is a free group of rank~$n\ge 2$ then $(F_P)_{\rm ab}\cong (\ZZ_P)^n\oplus T$ where $T$ is a nonzero $P'$\nobreakdash-torsion group. Thus the group $(F_P)_{\rm ab}$ is not uniquely $P'$-divisible, although $((F_P)_{\rm ab})_P\cong (F_{\rm ab})_P$.
\end{ex}

As next shown, Theorem~\ref{monad} also holds for coalgebras over a comonad. The proof is totally analogous, but the roles of local objects and equivalences are exchanged. Thus, the motto is now that \emph{a comonad $T$ preserves $L$\nobreakdash-local objects if and only if $L$ preserves $T$\nobreakdash-coalgebras}.

\begin{theorem}
\label{comonad}
Let $(T,\varepsilon,\delta)$ be a comonad on a category $\C$ and let $L$ be a localization on $\C$ with unit $l$. Let $U$ be the forgetful functor from the category $\C_T$ of $T$-coalgebras to~$\C$. Then the following statements are equivalent:
\begin{itemize}
\item[{\rm (a)}] $T$ preserves $L$\nobreakdash-local objects.
\item[{\rm (b)}] For every $T$\nobreakdash-coalgebra
$(X,a)$ there is a $T$\nobreakdash-coalgebra structure $\tilde a\colon LX\to TLX$ on $LX$ such that $l_X\colon (X,a)\to (LX,\tilde a)$ is a morphism of $T$\nobreakdash-coalgebras, and if $g\colon (X,a)\to (Y,b)$ is any morphism of $T$\nobreakdash-coalgebras then so is $Lg\colon (LX,\tilde a)\to (LY,\tilde b)$.
\item[{\rm (c)}] There is a localization $L_T$ on $\C_T$ with unit $l_T$ such that $LU=UL_T$ and $lU=Ul_T$.
\item[{\rm (d)}] There is a localization $L_T$ on $\C_T$ together with a natural isomorphism $LU\cong UL_T$.
\end{itemize}

If these statements hold, then the $T$-coalgebra structures $\tilde a\colon LX\to TLX$ given in part~{\rm (b)} are unique, and the localization $L_T$ in~{\rm (c)} is also unique. Moreover, $U$ preserves and reflects local objects and equivalences.
\end{theorem}

\begin{proof}
Suppose that $T$ preserves $L$-local objects.
In order to obtain $\tilde a\colon LX\to TLX$ with $\tilde a\circ l_X=Tl_X\circ a$, use that $TLX$ is $L$\nobreakdash-local by assumption and $l_X$ is an $L$\nobreakdash-equiv\-al\-ence, and note that $\tilde a$ is then unique. The relations $T\tilde a\circ\tilde a=\delta_{LX}\circ\tilde a$ and $\varepsilon_{LX}\circ\tilde a={\rm id}_{LX}$ are consequences of the equalities $T\tilde a\circ\tilde a\circ l_X=\delta_{LX}\circ\tilde a\circ l_X$ and $\varepsilon_{LX}\circ\tilde a\circ l_X=l_X$.
The localization $L_T$ is defined by $L_T(X,a)=(LX,\tilde a)$ and the rest of the proof follows the same steps as the proof of Theorem~\ref{monad}. The implication (d)~$\Rightarrow$~(a) follows from Lemma~\ref{adjprop}, as the right adjoint $FX=(TX,\varepsilon_X)$ preserves local objects if and only if $U$ preserves equivalences.
\end{proof}

By passing to opposite categories, Theorem~\ref{comonad} yields a result about preservation of algebras over a monad under the effect of a colocalization, which we next state for later reference, and Theorem~\ref{monad} dualizes into a result relating colocalizations with coalgebras over a comonad, which we omit.
Distributivity between comonads was previously studied in~\cite{Barr}. Distributive laws of monads over comonads have been considered in \cite[\S\,2.3]{HKRS}.

\begin{theorem}
\label{comonaddual}
Let $(T,\eta,\mu)$ be a monad on a category $\C$ and let $C$ be a colocalization on $\C$ with counit $c$. Let $U$ be the forgetful functor from the category $\C^T$ of $T$-algebras to~$\C$. Then the following statements are equivalent:
\begin{itemize}
\item[{\rm (a)}] $T$ preserves $C$\nobreakdash-colocal objects.
\item[{\rm (b)}] For every $T$\nobreakdash-algebra
$(X,a)$ there is a $T$\nobreakdash-algebra structure $\tilde a\colon TCX\to CX$ on $CX$ such that $c_X\colon (CX,\tilde a)\to (X,a)$ is a morphism of $T$\nobreakdash-algebras, and if $g\colon (X,a)\to (Y,b)$ is any morphism of $T$\nobreakdash-algebras then so is $Cg\colon (CX,\tilde a)\to (CY,\tilde b)$.
\item[{\rm (c)}] There is a colocalization $C^T$ on $\C^T$ with unit $c^T$ such that $CU=UC^T$ and \hbox{$cU=Uc^T$}.
\item[{\rm (d)}] There is a colocalization $C^T$ on $\C^T$ together with a natural isomorphism $CU\cong UC^T$.
\end{itemize}

If these statements hold, then the $T$-algebra structures $\tilde a\colon TCX\to CX$ given in part~{\rm (b)} are unique, and the colocalization $C^T$ in {\rm (c)} is also unique. Moreover, $U$ preserves and reflects colocal objects and equivalences.
\end{theorem}

\begin{proof}
The given monad $T$ defines a comonad on $\C^{\rm op}$ whose coalgebras are the $T$-algebras, and $C$ defines a localization on $\C^{\rm op}$ whose local objects are the $C$-colocal ones. Thus all the statements follow from those in Theorem~\ref{comonad}.
\end{proof}

\begin{ex}
As in Example~\ref{ex3}, if $\varphi\colon R\to S$ is a central ring homomorphism then we may consider the monad defined by $TM=S\otimes_R M$ on the category $\RMod$ of left $R$-modules. If $C$ is any colocalization on~$\RMod$, then $T$ preserves $C$-colocal objects, since \eqref{rs} yields
\begin{equation}
\label{trick}
\Hom_R(TN,f)\cong\Hom_R(S,\Hom_R(N,f))
\end{equation}
for every $C$-colocal $R$-module $N$ and every $C$-equivalence $f$ between $R$-modules. Therefore, $C$ lifts to the category $\SMod$ of left $S$-modules, as first shown in \cite[Proposition~2.1]{GRS}.
\end{ex}

\begin{ex}
If $C$ is any colocalization on the category of groups, then, according to \cite{DFGS}, $C$~preserves nilpotent groups of any nilpotency class $k$ and hence restricts to the full subcategory of these. This yields a natural group homomorphism
\begin{equation}
\label{nilpotent}
\beta_G\colon CG/\Gamma_k CG \longrightarrow C(G/\Gamma_k G)
\end{equation}
for every group $G$ and all~$k$, where $\Gamma_k$ denotes the $k$th term of the lower central series. For $k=1$, the homomorphism \eqref{nilpotent} takes the form
$(CG)_{\rm ab}\to C(G_{\rm ab})$, which is not an isomorphism in general, not even a $C$\nobreakdash-equivalence. For example, the $3$-torsion subgroup of the symmetric group $\Sigma_3$ is cyclic of order~$3$ while the abelianization of $\Sigma_3$ has no $3$-torsion.
\end{ex}

\section{Inverting morphisms and building from objects}
\label{invertingonemorphism}

A collection of morphisms ${\mathcal F}$ and a collection of objects ${\mathcal X}$ in a category~$\C$ are called \emph{orthogonal} if for every morphism $f\colon A\to B$ in ${\mathcal F}$ and every object $X$ in ${\mathcal X}$ the induced function
\begin{equation}
\label{orthogonal}
{\C}(f,X)\colon {\C}(B,X)\longrightarrow {\C}(A,X)
\end{equation}
is a bijection of sets. The objects orthogonal to ${\mathcal F}$ are called \emph{${\mathcal F}$\nobreakdash-local} and the morphisms orthogonal to the collection of all ${\mathcal F}$\nobreakdash-local objects are called \emph{${\mathcal F}$\nobreakdash-equivalences}.

An \emph{${\mathcal F}$\nobreakdash-localization} of an object $X$ is an ${\mathcal F}$\nobreakdash-equivalence $l_X\colon X\to L_{\mathcal F}X$ into an ${\mathcal F}$\nobreakdash-local object.
If an ${\mathcal F}$\nobreakdash-localization exists for all objects, then $L_{\mathcal F}$ is indeed a localization on~$\C$.
Note that, if a localization functor $L$ is given, then $L=L_{\mathcal F}$ where ${\mathcal F}$ is the collection of all $L$\nobreakdash-equivalences.

As shown in \cite[Theorem~1.39]{AR}, if a category $\C$ is locally presentable, then
$L_{\mathcal F}$ exists whenever ${\mathcal F}$ is a set, while if $\mathcal F$ is a proper class then the existence of $L_{\mathcal F}$ can be proved if one assumes the existence of sufficiently large cardinals \cite[Chapter~6]{AR}.

Dually, a collection of morphisms ${\mathcal F}$ and a collection of objects ${\mathcal A}$ in a category~$\C$ are \emph{co-orthogonal} if for every morphism $f\colon X\to Y$ in ${\mathcal F}$ and every object $A$ in ${\mathcal A}$ the induced function
\begin{equation}
\label{coorthogonal}
{\C}(A,f)\colon {\C}(A,X)\longrightarrow {\C}(A,Y)
\end{equation}
is a bijection of sets. The morphisms co-orthogonal to ${\mathcal A}$ are called \emph{${\mathcal A}$\nobreakdash-equivalences} and the objects co-orthogonal to the collection of all ${\mathcal A}$\nobreakdash-equivalences are called \emph{${\mathcal A}$\nobreakdash-colocal}. 

An \emph{${\mathcal A}$\nobreakdash-colocalization} of an object $X$ is
an ${\mathcal A}$\nobreakdash-equivalence $c_X\colon C_{\mathcal A} X\to X$ from an ${\mathcal A}$\nobreakdash-colocal object into~$X$.
If an ${\mathcal A}$\nobreakdash-colocalization exists for every object, then $C_{\mathcal A}$ is a colocalization on~$\C$. If a colocalization $C$ is given, then $C=C_{\mathcal A}$ where $\mathcal A$ is the collection of all $C$\nobreakdash-colocal objects.

If $\C$ is a locally presentable category, then the existence of $C_{\mathcal A}$ is ensured for every set ${\mathcal A}$ by the dual of the Special Adjoint Functor Theorem \cite[\S\,0.7]{AR}, since the full subcategory of $\mathcal A$\nobreakdash-colocal objects is closed under colimits and has a generating set (namely $\mathcal A$), and $\C$ is cowellpowered according to \cite[Theorem~1.58]{AR}. If $\mathcal A$ is a proper class, then the existence of $\C_{\mathcal A}$ also follows from convenient large-cardinal axioms \cite[Theorem~6.28]{AR}.

Since the main source of motivation of the present article is the study of localizations of the form $L_f$ for a single morphism~$f$ and colocalizations of the form $C_A$ for a single object~$A$, we will restrict statements of results to those cases, also for the sake of simplicity. However, most of our conclusions hold equally well for collections of morphisms instead of a single morphism, and for collections of objects instead of a single object, provided that the corresponding localizations or colocalizations exist.

\begin{proposition}
\label{FGf}
Let $F: \C_1\rightleftarrows \C_2 :G$ be a pair of adjoint functors and let
$f\colon A\to B$ be a morphism in~$\C_1$.
\begin{itemize}
\item[{\rm (i)}]
An object $Y$ in $\C_2$ is $Ff$\nobreakdash-local if and only if $GY$ is $f$\nobreakdash-local.
\item[{\rm (ii)}]
$F$ sends $f$-equivalences to $Ff$-equivalences.
\item[{\rm (iii)}]
If $L_f$ and $L_{Ff}$ exist, then there is a unique natural transformation
\[
\alpha\colon F{L_f}\longrightarrow L_{Ff}F
\]
such that $\alpha_X\circ Fl_X=l_{FX}$ for all~$X$,
and $\alpha_X$ is an $Ff$-equivalence for all~$X$. There is also a unique natural transformation
\[
\beta\colon L_f G\longrightarrow G L_{Ff}
\]
such that $\beta_Y\circ l_{GY}=Gl_Y$ for all~$Y$.
Furthermore, $\alpha$ is an isomorphism if and only if $F$ preserves local objects, and $\beta$ is an isomorphism if and only if $G$ preserves equivalences.
\end{itemize}
\end{proposition}

\begin{proof}
For every object $Y$ in~$\C_2$, consider the commutative diagram
\[\xymatrix{
\C_2(FB,Y)\ar[r]
\ar[d]_\cong & \C_2(FA,Y) \ar[d]^\cong \\
\C_1(B,GY)\ar[r]
& \C_1(A,GY)\rlap{,} }
\]
where the vertical bijections are given by the adjunction and the horizontal arrows are induced by $Ff$ and~$f$. It follows that $GY$ is $f$\nobreakdash-local if and only if $Y$ is
$Ff$\nobreakdash-local, as claimed. Part~(ii) is a consequence of~(i),
and all the claims in~(iii) follow from Theorem~\ref{alfabeta}.
\end{proof}

The natural transformations $\alpha$ and $\beta$ in part~(iii) of Proposition~\ref{FGf} are \emph{mates}; that is, each of them determines the other one as follows
(see \cite[p.\,5]{Shulman}):
\[
\beta = GL_f\varepsilon \circ G\alpha G\circ \eta L_fG, \qquad
\alpha = \varepsilon L_{Ff} F \circ F\beta F \circ FL_f\eta,
\]
where $\eta$ is the unit of the adjunction and $\varepsilon$ is the counit.

The analogue of Proposition~\ref{FGf} for colocalizations reads as follows.

\begin{proposition}
\label{coFGf}
Let $F: \C_1\rightleftarrows \C_2 :G$ be a pair of adjoint functors and $A$ an object in~$\C_1$.
\begin{itemize}
\item[{\rm (i)}]
A morphism $g$ in $\C_2$ is an $FA$\nobreakdash-equivalence if and only if $Gg$ is an $A$\nobreakdash-equiv\-al\-ence.
\item[{\rm (ii)}]
$F$ sends $A$-colocal objects to $FA$-colocal objects.
\item[{\rm (iii)}]
If $C_A$ and $C_{FA}$ exist, then there is a unique natural transformation
\[
\alpha\colon C_AG \longrightarrow G{C_{FA}}
\]
such that $Gc_Y\circ\alpha_Y=c_{GY}$ for all~$Y$, and $\alpha_Y$ is an $A$-equivalence for all~$Y$.
There is also a unique natural transformation
\[
\beta\colon F C_A\longrightarrow C_{FA} F
\]
such that $c_{FX}\circ\beta_X=Fc_X$ for all $X$.
Furthermore, $\alpha$ is an isomorphism if and only if $G$ preserves colocal objects, and $\beta$ is an isomorphism if and only if $F$ preserves equivalences.
\end{itemize}
\end{proposition}

\begin{proof}
Parts (i) and (ii) are proved using the commutative diagram
\[\xymatrix{
\C_2(FA,X)\ar[r]
\ar[d]_\cong & \C_2(FA,Y) \ar[d]^\cong \\
\C_1(A,GX)\ar[r]
& \C_1(A,GY)}
\]
for every $g\colon X\to Y$ in $\C_2$,
where the vertical bijections are given by the adjunction and the horizontal arrows are induced by $g$ and $Gg$. Part (iii) comes from Theorem~\ref{coalfabeta}.
\end{proof}

\begin{proposition}
\label{greattheorem}
Let $F:\C\rightleftarrows\C^T:U$ be the Eilenberg--Moore factorization
of a monad $T$ on a category~$\C$.
Let $f$ be a morphism in $\C$ such that $L_f$ exists.
Then the following statements are equivalent:
\begin{itemize}
\item[{\rm (i)}]
$T$ preserves $f$\nobreakdash-equivalences.
\item[{\rm (ii)}]
$L_{Ff}$ exists and there is a natural isomorphism $L_fU\cong UL_{Ff}$.
\end{itemize}

Moreover, if $T$ preserves both $f$\nobreakdash-equivalences and $Tf$-equivalences and $L_{Tf}$ exists, then there is a natural isomorphism
$L_fU\cong L_{Tf}U$.
\end{proposition}

\begin{proof}
Suppose first that $T$ preserves $f$\nobreakdash-equivalences. Theorem~\ref{monad}
tells us that there is a localization $L^T$ on $\C^T$ equipped with a natural isomorphism $L_fU\cong UL^T$, for which $U$ preserves and reflects local objects and equivalences. Hence the $L^T$\nobreakdash-local objects are those $(X,a)$ in $\C^T$ such that $X$ is $f$\nobreakdash-local. But part~(i) of Proposition~\ref{FGf} tells us that $X$ is $f$\nobreakdash-local if and only if $(X,a)$ is $Ff$\nobreakdash-local. Hence $L^T$ is indeed an $Ff$\nobreakdash-localization (which therefore exists).
Conversely, a natural isomorphism $L_fU\cong UL_{Ff}$ implies that $T$ preserves $f$\nobreakdash-equivalences, according to Theorem~\ref{monad}. This proves that (i)~$\Leftrightarrow$~(ii).

Now assume that $T$ preserves $f$\nobreakdash-equivalences and $Tf$-equivalences, and that $L_{Tf}$ exists. Then the implication (i) $\Rightarrow$ (ii) yields natural isomorphisms
\begin{equation}
\label{here}
L_fU\cong UL_{Ff} \quad \text{and} \quad L_{Tf}U\cong UL_{FTf}.
\end{equation}

Since $T$ preserves $f$\nobreakdash-equivalences, $Tf$ is an $f$\nobreakdash-equivalence. Hence all $f$\nobreakdash-local objects are $Tf$\nobreakdash-local. If a $T$-algebra $(X,a)$ is $Ff$-local, then $X$ is $f$-local by part~(i) of Proposition~\ref{FGf}, and hence $X$ is $Tf$-local. But this implies that $(X,a)$ is $FTf$-local, again by part~(i) of Proposition~\ref{FGf}. Conversely, since $T=UF$, the fact that $F$ is a retract of $FUF$ by \eqref{retraction} implies that $Ff$ is an $FTf$\nobreakdash-equivalence, and consequently every $FTf$\nobreakdash-local $T$-algebra is $Ff$\nobreakdash-local. This allows us to conclude that the classes of $Ff$-local $T$-algebras and $FTf$-local $T$-algebras coincide, so $L_{Ff}\cong L_{FTf}$.
Therefore $L_fU\cong L_{Tf}U$ by~\eqref{here}, as claimed.
\end{proof}

If $T$ is an idempotent monad on a category~$\C$ and we denote
by $J\colon\calS\to\C$ the inclusion of the full subcategory of $T$\nobreakdash-local objects and by $K\colon\C\to\calS$ its left adjoint, then, as a special case of Proposition~\ref{greattheorem}, we infer, for a morphism $f$ in~$\C$, the following facts:
\begin{itemize}
\item[{\rm (i)}]
If $L_f$ preserves~$\calS$, then
there is a natural isomorphism $L_fJ\cong JL_{Kf}$.
\item[{\rm (ii)}]
If $L_{Tf}$ also preserves~$\calS$,
there is a natural isomorphism $L_fJ\cong L_{Tf}J$.
\end{itemize}
In fact, the proof is easier, since the counit $\varepsilon$ of the adjunction
is in this case an isomorphism and hence $K\cong KJK$.

\begin{ex}
As an example, let $T$ be abelianization on the category of groups. Since all localizations on groups preserve abelian groups, part~(ii) tells us that
\[
L_fA\cong L_{f_{\rm ab}}A
\]
for every group homomorphism $f$ and all abelian groups~$A$.
This fact was used in~\cite{CRT}.
\end{ex}

As in Section~\ref{cosection}, there is an analogous version of Proposition~\ref{greattheorem} for colocalizations:

\begin{proposition}
\label{cogreattheorem}
Let $F:\C\rightleftarrows\C^T:U$ be the Eilenberg--Moore factorization of a monad $T$ on a category~$\C$. Let $A$ be an object in $\C$ such that $C_A$ exists.
Then the following statements are equivalent:
\begin{itemize}
\item[{\rm (i)}]
$T$ preserves $A$-colocal objects.
\item[{\rm (ii)}]
$C_{FA}$ exists and there is a natural isomorphism $C_AU\cong UC_{FA}$.
\end{itemize}

Moreover, if $T$ preserves both $A$\nobreakdash-colocal objects and $TA$\nobreakdash-colocal objects and $C_{TA}$ exists, then there is a natural isomorphism
$C_AU\cong C_{TA}U$.
\end{proposition}

\begin{proof}
The assumption that $T$ preserves $A$-colocal objects implies, by Theorem~\ref{comonaddual}, that $C_A$ induces a colocalization $C^T$ on $\C^T$ such that $C_AU\cong UC^T$ naturally, and $U$ preserves and reflects local objects and equivalences.
Hence the $C^T$\nobreakdash-colocal
objects are those $(X,a)$ in $\C^T$ such that $X$ is $A$\nobreakdash-colocal. From
Proposition~\ref{coFGf} we then infer that $X$ is $A$\nobreakdash-colocal if and only if $(X,a)$ is $FA$\nobreakdash-colocal. Hence $C^T$ is an $FA$\nobreakdash-colocalization, as claimed. The remaining steps are analogous to those in the proof of Proposition~\ref{greattheorem}.
\end{proof}

\begin{ex}
As observed in Example~\ref{ex3},
if $\C(T-,-)$ depends functorially on $\C(-,-)$, then $T$ preserves
$f$\nobreakdash-equivalences for \emph{every}~$f$, and in such cases the assumptions that $T$
preserves $f$\nobreakdash-equivalences and $Tf$\nobreakdash-equiv\-al\-en\-ces in Proposition~\ref{greattheorem} are automatically fulfilled. This happens, for instance, if $\varphi\colon R\to S$ is a central ring homomorphism and $TA=S\otimes_R A$, where $\C$ is the category of left $R$-modules.
Thus we infer from Proposition~\ref{greattheorem} that there is a natural isomorphism
\begin{equation}
\label{Sotimesf}
L_fM\cong L_{S\otimes_R f}M
\end{equation}
for every left $S$\nobreakdash-module $M$ and every homomorphism $f$ of left $R$-modules.
Proposition~\ref{greattheorem} also tells us that there is no ambiguity in the right-hand term of (\ref{Sotimesf}), as it may indistinctly mean the underlying $R$-module of the localization of $M$ with respect to $S\otimes_R f$ in the category of left $S$\nobreakdash-modules or the localization of the $R$-module underlying $M$ with respect to the $R$-module homomorphism underlying $S\otimes_R f$, that is,
\[
L_fUM\cong UL_{S\otimes_R f}M \cong L_{U(S\otimes_R f)}UM.
\]

Similarly, $T$ preserves $A$-colocal objects for every left $R$-module $A$, and therefore we infer from Proposition~\ref{cogreattheorem} that there is a natural isomorphism
\begin{equation}
\label{coSotimesf}
C_AM\cong C_{S\otimes_R A}M
\end{equation}
for every left $S$\nobreakdash-module $M$ and every left $R$-module $A$.
In fact, \eqref{coSotimesf} really means that
\[
C_AUM\cong UC_{S\otimes_R A}M \cong C_{U(S\otimes_R A)}UM.
\]
\end{ex}

Both Proposition~\ref{greattheorem} and Proposition~\ref{cogreattheorem} have duals.
If $L_f$ is a localization on~$\C$ for some morphism~$f$, then $L_f$ may be viewed as a colocalization on the opposite category $\C^{\rm op}$ whose colocal objects are those that are $f$-local in $\C$, that is, those objects that are orthogonal to $f$ in $\C$ and hence co-orthogonal to $f$ in $\C^{\rm op}$. Although it is actually the same functor~$L_f$, this colocalization could be denoted by~$C_f$ for consistency. 
Similarly, if $C_A$ is a colocalization on~$\C$ for some object~$A$, then it is a localization on~$\C^{\rm op}$ whose equivalences are those morphisms that are $A$\nobreakdash-equiv\-alences in~$\C$, and we denote this localization by~$L_A$. 
Accordingly, we call \emph{$A$\nobreakdash-local} those objects in a category that are $A$-colocal in the opposite category. In other words, the class of $A$-local objects in a category is the closure of $A$ under double passage to the orthogonal complement. This notation and terminology is not of common use, so we confine it to the statement of the next result.
The assumption that $L_{A}$ exists is really restrictive, since the opposite of a locally presentable category is not locally presentable. 

\begin{corollary}
\label{cocogreattheorem}
Let $U:\C_T\rightleftarrows\C:F$ be the Eilenberg--Moore factorization of a comonad $T$ on a category~$\C$. Let $A$ be an object in $\C$ such that $L_{A}$ exists.
Then the following statements are equivalent:
\begin{itemize}
\item[{\rm (i)}]
$T$ preserves $A$-local objects.
\item[{\rm (ii)}]
$L_{FA}$ exists and there is a natural isomorphism $L_{A}U\cong UL_{FA}$.
\end{itemize}

Moreover, if $T$ preserves both $A$\nobreakdash-local objects and $TA$\nobreakdash-local objects and $L_{TA}$ exists, then there is a natural isomorphism
$L_A U\cong L_{TA}U$.
\end{corollary}

\section{Homotopical localizations and cellularizations}

In the remaining sections we discuss localizations and colocalizations
in a homotopical context, using the formalism of Quillen model categories~\cite{Quillen}, which we assume equipped with functorial factorizations.
Every model category $\M$ can be endowed with \emph{homotopy function complexes} as described in \cite[\S\,17.5]{Hirschhorn} or \cite[Chapter~5]{Hovey}. We denote by $\map_{\M}(-,-)$ any choice of such which is functorial in both variables.

An adjunction between model categories
$F: \M \rightleftarrows \Nmodel : G$
is a \emph{Quillen adjunction} if $F$ preserves cofibrations and trivial cofibrations or, equivalently, if $G$ preserves fibrations and trivial fibrations.
If this is the case, then they give rise to a \emph{derived} adjunction
\begin{equation}
\label{derivedadjunction}
FQ: \Ho(\M) \rightleftarrows \Ho(\Nmodel) : GR
\end{equation}
between the corresponding homotopy categories,
after having chosen a cofibrant replacement functor $Q$ on $\M$ and a fibrant replacement functor $R$ on~$\Nmodel$.
It then follows, as explained in \cite[Proposition~17.4.16]{Hirschhorn},
that the induced map
\begin{equation}
\label{enrichedadjunction}
\map_{\Nmodel}(FQ X,Y)\longrightarrow\map_{\M}(X,GR Y)
\end{equation}
is a natural weak equivalence of simplicial sets
for $X$ in $\M$ and $Y$ in $\Nmodel$, whose value at $\pi_0$
coincides with the bijection given by the derived adjunction~(\ref{derivedadjunction}).

Unless otherwise specified, in the next sections we will consider this kind of simplicially enriched orthogonality between objects and maps in model categories. Thus, an object $X$ and a map $f\colon V\to W$ in a model category $\M$ will be called \emph{simplicially orthogonal} (or, as in \cite[\S\,17.8]{Hirschhorn}, \emph{homotopy orthogonal})~if
\begin{equation}
\label{enrichedorthogonal}
\map_{\M}(f,X)\colon \map_{\M}(W,X)\longrightarrow \map_{\M}(V,X)
\end{equation}
is a weak equivalence of simplicial sets.

The objects that are simplicially orthogonal to every map in a given collection $\mathcal F$ are called \emph{$\mathcal F$\nobreakdash-local}, and the maps that are simplicially orthogonal to the collection of all $\mathcal F$\nobreakdash-local objects are called \emph{$\mathcal F$\nobreakdash-equivalences}. For convenience we omit in this article the standard assumption that $\mathcal F$-local objects have to be fibrant (thus, our convention is that an object weakly equivalent to an $\mathcal F$-local object is $\mathcal F$-local). Every $\mathcal F$-equivalence between $\mathcal F$-local objects is a weak equivalence. 

If $\M$ satisfies suitable assumptions ---for instance, if it is cofibrantly generated and left proper, and its underlying category is locally presentable, as for pointed or unpointed simplicial sets, Bousfield--Friedlander spectra \cite{BF}, symmetric spectra over simplicial sets \cite{HSS}, groupoids \cite{CGT}, and many other cases---
then for every set of maps $\mathcal F$ there exists a model category $\M_{\mathcal F}$, called \emph{left Bousfield localization} of $\M$ with respect to~$\mathcal F$, with the same underlying category as $\M$ and the same cofibrations, in which the weak equivalences are the $\mathcal F$\nobreakdash-equivalences and the fibrant objects are those that are fibrant in $\M$ and $\mathcal F$\nobreakdash-local; see \cite[\S 3.3]{Hirschhorn} for details. Left Bousfield localizations still exist if $\mathcal F$ is a proper class, provided that a suitable large-cardinal axiom holds~\cite{RT}.

An \emph{$\mathcal F$\nobreakdash-localization} of an object $X$ of $\M$ is
a trivial cofibration $l_X\colon X\to L_{\mathcal F}X$ in $\M_{\mathcal F}$ with $L_{\mathcal F}X$ fibrant in~$\M_{\mathcal F}$. Thus if $\M_{\mathcal F}$ exists then $L_{\mathcal F}$ is a fibrant replacement functor on~$\M_{\mathcal F}$. As such, $(L_{\mathcal F},l)$ is a monad on $\M$ that is idempotent on the homotopy category~$\Ho(\M)$. Therefore the classes of $\mathcal F$\nobreakdash-equivalences and $\mathcal F$\nobreakdash-local objects determine each other by ordinary orthogonality in~$\Ho(\M)$.

In what follows, a \emph{homotopical localization} on a model category $\M$ will mean an $\mathcal F$\nobreakdash-local\-ization $L_{\mathcal F}$ for some collection of maps~$\mathcal F$.
This terminology is consistent with previous articles such as \cite{Chorny, CGMV}.
Although our results hold indeed for arbitrary collections of maps, we will state them for a single map $f$ for simplicity of notation, since most motivating cases involve one map only.

The next result is a homotopical version of Proposition~\ref{FGf}.
As explained in \cite[\S\,3.1.11]{Hirschhorn}, we need to impose suitable fibrancy and cofibrancy assumptions due to the fact that $F$ preserves weak equivalences between cofibrant objects but not all weak equivalences in general, and $G$ preserves weak equivalences between fibrant objects.

\begin{proposition}
\label{HoFGf}
Let $F: \M \rightleftarrows \Nmodel :G$ be a Quillen adjunction between model categories.
For a map $f\colon V\to W$ in~$\M$ between cofibrant objects, the following assertions hold:
\begin{itemize}
\item[{\rm (i)}]
A fibrant object $Y$ in $\Nmodel$ is $\FQ f$\nobreakdash-local if and only if $G Y$ is $f$\nobreakdash-local.
\item[{\rm (ii)}]
$\FQ$ sends $f$\nobreakdash-equivalences between cofibrant objects to $\FQ f$\nobreakdash-equivalences.
\item[{\rm (iii)}]
If the left Bousfield localizations $\M_f$ and $\Nmodel_{\FQ f}$ exist, then for every cofibrant object $X$ in $\M$ there is a homotopy unique and homotopy natural $\FQ f$-equivalence
\[
\alpha_X\colon \FQ{L_f}X \longrightarrow L_{\FQ f} \FQ X
\]
such that $\alpha_X\circ \FQ l_X= l_{\FQ X}$, and for every object $Y$ in $\Nmodel$ there is a homotopy unique and homotopy natural map
\[
\beta_Y\colon L_f GY \longrightarrow GL_{\FQ f}Y
\]
such that $\beta_Y\circ l_{GY}= Gl_Y$.
Moreover, $\alpha_X$ is a weak equivalence if and only if $\FQ L_fX$ is $\FQ f$-local, and $\beta_Y$ is a weak equivalence if and only if $Gl_Y$ is an $f$\nobreakdash-equivalence.
\end{itemize}

If $F$ preserves all weak equivalences then the cofibrancy assumptions are not necessary, and if $G$ preserves all weak equivalences then the fibrancy assumption in part~{\rm (i)} can be omitted.
\end{proposition}

\begin{proof}
For a fibrant object $Y$ in ${\Nmodel}$, consider the commutative diagram
\[\xymatrix
{
\map_{\Nmodel}(\FQ W,Y)\ar[r] 
\ar[d]_\simeq & \map_{\Nmodel}(\FQ V,Y) \ar[d]^\simeq
\\
\map_{\M}(W,G Y)\ar[r]
& \map_{\M}(V,G Y) , }
\]
where the vertical weak equivalences are given by \eqref{enrichedadjunction} since $F$ preserves cofibrant objects and $G$ preserves fibrant ones, and the horizontal arrows are induced by $Ff$ and~$f$. It follows that, if $Y$ is fibrant, then $GY$ is $f$\nobreakdash-local if and only if $Y$ is $\FQ f$\nobreakdash-local, as claimed in part~(i).

Therefore, $G$ sends fibrant $\FQ f$\nobreakdash-local objects to $f$\nobreakdash-local objects, and this implies that $\FQ$ sends $f$\nobreakdash-equivalences between cofibrant objects to $\FQ f$\nobreakdash-equivalences.
This proves~(ii).

Now (ii) implies that, for every cofibrant $X$ in~$\M$, the map $\FQ l_X$ is an $\FQ f$\nobreakdash-equiv\-al\-ence and hence a trivial cofibration in~$\Nmodel_{Ff}$.
Since $L_{\FQ f}\FQ X$ is fibrant in~$\Nmodel_{Ff}$, there is a map
\[
\alpha_X\colon \FQ L_fX\longrightarrow L_{\FQ f}\FQ X
\]
such that $\alpha_X\circ \FQ l_X= l_{\FQ X}$, and $\alpha_X$ is unique up to homotopy with this property. It also follows that $\alpha_X$ is an $\FQ f$\nobreakdash-equivalence since $\FQ l_X$ and $l_{\FQ X}$ are $\FQ f$\nobreakdash-equivalences, and $\alpha_X$ is a weak equivalence if and only if $\FQ L_fX$ is $\FQ f$\nobreakdash-local (although it need not be fibrant). Naturality of $\alpha$ after passing to the homotopy categories is proved as in Theorem~\ref{alfabeta}.

Since the map $l_{GY}$ is a trivial cofibration in the model category $\M_f$ and $GL_{\FQ f}Y$ is fibrant in $\M_f$ by part~(i), there is a map
\[
\beta_Y\colon L_fGY\longrightarrow GL_{\FQ f}Y
\]
such that $\beta_Y\circ l_{GY}=Gl_Y$. It is unique up to homotopy since $GL_{\FQ f}Y$ is $f$-local and $l_{GY}$ is an $f$-equivalence, and it is natural up to homotopy for similar reasons.
Moreover, $\beta_Y$ is a weak equivalence if and only if it is an $f$-equivalence, which happens if and only if $Gl_Y$ is an $f$-equivalence.

If $F$ preserves weak equivalences then for every object $X$ we may choose a cofibrant approximation $\tilde X\to X$ and we have $F\tilde X\simeq FX$, so the previous arguments hold with $\tilde X$ in the place of $X$. Similarly, if $G$ preserves weak equivalences, then for each $Y$ we may choose a fibrant approximation $Y\to\hat Y$ and use $\hat Y$ instead of~$Y$.
\end{proof}

\begin{ex}
The standard model structure on the category $\Gpd$ of groupoids \cite{CGT} has a
simplicial enrichment given by $N{\rm Hom}(-,-)$, where ${\rm Hom}(G,H)$
denotes the groupoid of functors $G\to H$ and $N$ denotes the nerve. Since all groupoids are fibrant and cofibrant, $N{\rm Hom}(G,H)$ is a homotopy function complex from $G$ to $H$ in~$\Gpd$.

Fundamental groupoid and nerve form a Quillen adjunction
\[
\pi : \Ssets\rightleftarrows \Gpd : N,
\]
where $\Ssets$ denotes the category of simplicial sets,
and Proposition~\ref{HoFGf} yields a morphism
\begin{equation}
\label{cgt}
\alpha_X\colon \pi L_f X\longrightarrow L_{\pi f}(\pi X)
\end{equation}
for all $X$ and all~$f$, which is the natural $\pi f$\nobreakdash-equivalence studied in~\cite{CGT}. From the fact that the morphism \eqref{cgt} is a $\pi f$-equivalence it follows that, if $X$ is $1$-connected, then $L_{\pi f}(\pi L_fX)$ is trivial. It is a long-standing open problem to decide if $L_fX$ is in fact $1$-connected for every map $f$ when $X$ is $1$-connected.

It should be possible to extend (\ref{cgt}) to higher dimensions by using
suitable categories of algebraic models for $n$\nobreakdash-types, in which
homotopical localizations can be effectively computed,
as done in \cite{CGT} for the model category of groupoids.
This might yield relevant information
on the $n$\nobreakdash-type of~$L_fX$, which is usually difficult to relate
with the $n$\nobreakdash-type of~$X$.

Let us however emphasize that $L_{\pi f}(\pi X)$ is very different from the space $L_{P_1 f}(P_1 X)$, where $P_1$ denotes the first Postnikov section, that is, $P_1X=K(\pi_1(X,x_0),1)$. The spaces
$P_1L_fX$ and $L_{P_1f}(P_1X)$ are not $P_1f$-equivalent in general. For example, let $f$ be a map between wedges of circles such that $L_fX$
is the localization of $X$ at the prime~$3$; thus $P_1f=f$.
If $X=K(\Sigma_3,1)$ where $\Sigma_3$ is the symmetric group on three letters, then, as shown in \cite[Example~8.2]{CP}, the space $L_fX=K(\Sigma_3,1)_{(3)}$
is $1$-connected since $\Sigma_3$ is generated by elements of order~$2$,
yet it is not contractible since $\Sigma_3$ has nonzero mod~$3$ homology.
Therefore, $P_1L_fX$ is contractible while $L_{P_1f}(P_1X)=L_fX$ is not.
Although $P_1$ is left adjoint to the inclusion
of the full subcategory of $1$\nobreakdash-coconnected spaces, the functor
$L_{P_1f}$ does not restrict to this subcategory; that is, $L_{P_1f}(P_1X)$ need not be a $K(G,1)$.
\end{ex}

If $\mathcal A$ is any collection of objects in a model category~$\M$, then a map $g\colon X\to Y$ will be called an \emph{$\mathcal A$\nobreakdash-equiv\-alence} if
\[
\map_{\M}(A,g)\colon \map_{\M}(A,X)\longrightarrow\map_{\M}(A,Y)
\]
is a weak equivalence of simplicial sets for every $A\in{\mathcal A}$. Correspondingly, an object $B$ (for the purposes of this article, not necessarily cofibrant) is called \emph{$\mathcal A$\nobreakdash-co\-local} (or, more commonly, \emph{$\mathcal A$\nobreakdash-cellular}) if $\map_{\M}(B,g)$ is a weak equivalence for every $\mathcal A$\nobreakdash-equivalence $g$. 

As shown in \cite[\S\,5.1]{Hirschhorn}, if suitable assumptions are imposed on $\M$
then there exists a model category $\M_{\mathcal A}$ for every set of objects $\mathcal A$, called \emph{right Bousfield localization} of $\M$ with respect to~$\mathcal A$, with the same underlying category as $\M$ and the same fibrations, in which the weak equivalences are the $\mathcal A$-equivalences and the cofibrant objects are those that are cofibrant in $\M$ and $\mathcal A$\nobreakdash-colocal.
In order to ensure the existence of $\M_{\mathcal A}$ it is sufficient to assume that $\M$ is right proper and cofibrantly generated~\cite[\S\,5]{Barwick}. In fact the latter condition can be weakened, as done in~\cite[\S\,2]{CI}.

An \emph{$\mathcal A$\nobreakdash-co\-loc\-alization} (or \emph{$\mathcal A$\nobreakdash-cell\-ular\-ization}) of an object $X$ of $\M$ is a trivial fibration $c_X\colon C_{\mathcal A}X\to X$ in $\M_{\mathcal A}$ with $C_{\mathcal A}X$ cofibrant in~$\M_{\mathcal A}$. 
Thus if $\M_{\mathcal A}$ exists then $C_{\mathcal A}$ is a cofibrant replacement functor on~$\M_{\mathcal A}$. As such, $(C_{\mathcal A},c)$ is a comonad on $\M$ that is idempotent on the homotopy category~$\Ho(\M)$.

A \emph{homotopical colocalization} on a model category $\M$ will mean an $\mathcal A$\nobreakdash-cellular\-ization $C_{\mathcal A}$ for some collection of objects~$\mathcal A$.
We will state our results for a single object $A$ in the rest of the article for consistency with the examples, although most of our conclusions remain valid for collections of objects.

\newpage

\begin{proposition}
\label{coHoFGf}
Let $F: \M \rightleftarrows \Nmodel :G$ be a Quillen adjunction between model categories. For a cofibrant object $A$ in~$\M$, the following assertions hold:
\begin{itemize}
\item[{\rm (i)}]
A map $g$ in $\Nmodel$ between fibrant objects is an $FA$\nobreakdash-equivalence if and only if $Gg$ is an $A$\nobreakdash-equiv\-al\-ence.
\item[{\rm (ii)}]
$F$ sends cofibrant $A$\nobreakdash-cellular objects to $FA$\nobreakdash-cellular objects.
\item[{\rm (iii)}]
If the right Bousfield localizations $\M_A$ and $\Nmodel_{FA}$ exist, then for every fibrant object $Y$ in $\Nmodel$ there is a homotopy unique and homotopy natural $A$\nobreakdash-equivalence
\[
\alpha_Y\colon C_{A}G Y \longrightarrow G{C_{FA}} Y
\]
such that $Gc_Y\circ\alpha_Y=c_{GY}$, and for every object $X$ in $\M$ there is a homotopy unique and homotopy natural map
\[
\beta_X\colon F C_A X\longrightarrow C_{FA} F X
\]
such that $c_{FX}\circ\beta_X=Fc_X$. Furthermore, $\alpha_Y$ is a weak equivalence if and only if $GC_{FA}Y$ is $A$\nobreakdash-cellular,
and $\beta_X$ is a weak equivalence if and only if $Fc_X$ is an $FA$\nobreakdash-equivalence.
\end{itemize}

If $F$ preserves all weak equivalences then the cofibrancy assumptions are not necessary, and if $G$ preserves all weak equivalences then the fibrancy assumptions can be omitted.
\end{proposition}

\begin{proof}
This is proved in the same way as Proposition~\ref{HoFGf}.
\end{proof}

\begin{obs}
\label{cellular}
If an object $C$ is $B$-cellular and $B$ is $A$-cellular, then $C$ is $A$-cellular. To check this, let $g\colon X\to Y$ be any $A$-equivalence. Then $\map_{\M}(B,g)$ is a weak equivalence. Hence $g$ is a $B$-equivalence, and this implies that $\map_{\M}(C,g)$ is a weak equivalence, as needed.

A similar argument in the case of localizations shows that if a map $h$ is a $g$-equivalence and $g$ is an $f$-equivalence, then $h$ is an $f$-equivalence.
\end{obs}

\begin{proposition}
\label{FAFTAFfFTf}
Let $F: \M \rightleftarrows \Nmodel :G$ be a Quillen adjunction between model categories and let $T=GF$ be the associated monad.
\begin{itemize}
\item[{\rm (a)}]
Let $A$ be a cofibrant object of $\M$. If $TA$ is cofibrant and $A$\nobreakdash-cell\-ular, then the classes of $FA$-cellular objects and $FTA$-cell\-ular objects coincide.
\item[{\rm (b)}]
Let $f$ be a map in $\M$ between cofibrant objects.
If $Tf$ is an $f$\nobreakdash-equivalence between cofibrant objects, then the classes of $Ff$-equivalences and $FTf$\nobreakdash-equiv\-alences coincide.
\end{itemize}
\end{proposition}

\begin{proof}
In part (a), since $TA$ is cofibrant and $A$\nobreakdash-cellular by assumption, we infer from part~(ii) of Proposition~\ref{coHoFGf} that $FTA$ is $FA$\nobreakdash-cellular. Conversely, $FA$ is $FTA$\nobreakdash-cellular since $F$ is a retract of~$FUF=FT$ by~\eqref{retraction}. Hence, by Remark~\eqref{cellular}, the classes of $FA$-cellular objects and $FTA$-cellular objects are equal.

Part (b) is proved with the same argument, using part~(ii) of Proposition~\ref{HoFGf}.
\end{proof}

We conclude this section with a result relating certain localizations with cellularizations, followed by examples that will be relevant in Section~\ref{operads}. In a pointed model category, localization with respect to a map $A\to *$ is called \emph{$A$-nullification} and denoted by~$P_A$. A~motivating example is $A=S^{n+1}$ for $n\ge 0$ in the category of pointed simplicial sets, for which $P_A$ is the $n$th Postnikov section. In this example, $C_A$ is the $n$-connected cover, so there is a homotopy fibre sequence
\begin{equation}
\label{CAPA}
C_AX\longrightarrow X\longrightarrow P_AX
\end{equation}
for every space~$X$ if $A=S^{n+1}$. An analogous sequence exists for spectra. In fact, \eqref{CAPA} is a homotopy fibre sequence of spectra if and only if a certain condition stated in \cite[Theorem~3.6]{Javier4} holds, as is the case if $A$ is any suspension of the sphere spectrum, and also whenever $C_A$ (and hence also $P_A$) commutes with suspension. The extent to which \eqref{CAPA} fails to be a homotopy fibre sequence for spaces in general was discussed in~\cite{Chacholski}.

\begin{theorem}
\label{LfCA}
Let $A$ be a cofibrant object in a pointed model category $\M$ such that $C_A$ and $P_A$ exist and for every object $X$ the natural sequence
\[
C_AX\longrightarrow X\longrightarrow P_AX
\]
is a homotopy fibre sequence. Let $f\colon V\to W$ be a map between $A$-cellular objects such that $L_f$ exists. Then there is a natural equivalence $L_fC_A\simeq C_AL_f$.
\end{theorem}

\begin{proof}
Since $V$ and $W$ are $A$-cellular, if $Z$ is $f$-local then there are weak equivalences
\[
\map_{\M}(W,C_AZ)\simeq\map_{\M}(W,Z)\simeq\map_{\M}(V,Z)\simeq\map_{\M}(V,C_AZ)
\]
showing that $C_AZ$ is also $f$-local. Thus $C_A$ preserves $f$-local objects, so our result will follow by applying Corollary~\ref{LC} within $\Ho(\M)$ if we prove that $C_A$ preserves $f$-equivalences. 

With this purpose, let $g\colon X\to Y$ be an $f$-equivalence, and consider the commutative diagram
\begin{equation}
\label{CAPAXY}
\xymatrix{C_AX \ar[r] \ar[d]_{C_A(g)} & X \ar[r] \ar[d]^g & P_AX \ar[d]^{P_A(g)} \\
C_AY \ar[r] & Y \ar[r] & P_AY\rlap{.}}
\end{equation}
Since $P_AV\simeq *$ and $P_AW\simeq *$, the map $f$ is a $P_A$-equivalence. Therefore, by Remark~\ref{cellular}, every $f$-equivalence is a $P_A$-equivalence, so in particular $P_A(g)$ is a weak equivalence. Now, if $Z$ is any $f$-local object, then \eqref{CAPAXY} yields by \cite[Corollary~6.4.2(c)]{Hovey} a commutative diagram whose rows are homotopy fibre sequences of simplicial sets:
\begin{equation}
\label{mapsonCAPAXY}
\xymatrix{\map_{\M}(P_AY,Z) \ar[r] \ar[d]_{\simeq} & \map_{\M}(Y,Z) \ar[r] \ar[d]^{\simeq} & \map_{\M}(C_AY,Z) \ar[d] \\
\map_{\M}(P_AX,Z) \ar[r] & \map_{\M}(X,Z) \ar[r] & \map_{\M}(C_AX,Z)\rlap{.}}
\end{equation}
This diagram shows that $C_A(g)\colon C_AX\to C_AY$ is an $f$-equivalence, as needed.
\end{proof}

\begin{ex}
\label{connective}
In the category of pointed simplicial sets, if $f$ is a map between connected spaces, then a space $X$ is $f$-local if and only if its basepoint component $X_0$ is $f$-local. Since $X_0\simeq C_{A}X$ for $A=S^1$, Theorem~\ref{LfCA} yields a natural equivalence $L_fX_0\simeq (L_fX)_0$ for all~$X$.

More generally, if $f$ is any map between $n$-connected spaces for $n\ge 0$, and $X\langle n\rangle$ denotes the $n$-connected cover of a space~$X$, then $X$ is $f$-local if and only if $X\langle n\rangle$ is $f$-local, so Theorem~\ref{alfabeta} yields a natural map
\[
\beta\colon L_fX\langle n\rangle\longrightarrow (L_fX)\langle n\rangle.
\]
which is weak equivalence by Theorem~\ref{LfCA}. This fact was first found in \cite[Theorem~5.2]{CRT}. It does no longer hold if $f$ is not a map between $n$-connected spaces; for example, it is well-known that localization with respect to $K$-theory lowers connectivity~\cite{Mislin}.

Theorem~\ref{LfCA} also implies that, if $f$ is any map between $n$-connected spectra, then $L_fX\langle n\rangle\simeq (L_fX)
\langle n\rangle$ for every spectrum $X$ and all~$n\in\ZZ$. As a special case, 
\begin{equation}
\label{loccoloc}
L_{\Sigma^{\infty}f}X^c\simeq (L_{\Sigma^{\infty}f}X)^c
\end{equation}
for every map $f$ of spaces and every spectrum $X$, where the superscript denotes the connective cover. By \emph{connective} we mean $(-1)$-connected, that is, $X^c\simeq C_SX$, where $S$ is the sphere spectrum.
\end{ex}

\section{Preservation of algebras over monads in model categories}
\label{homotopicalstructures}

In what follows we consider monads on model categories arising from Quillen adjunctions.
For a monad $T$ acting on a model category $\M$, we denote by $\M^T$ the category of $T$-algebras, as in the previous sections. 

A model structure on $\M^T$ is called \emph{transferred} or \emph{right-induced} from the model structure on $\M$ if the forgetful functor $U$ creates (that is, preserves and reflects) weak equivalences and fibrations in the Eilenberg--Moore factorization of $T$,
\begin{equation}
\label{FU}
F:\M\rightleftarrows\M^T:U.
\end{equation}
Transferred model structures on categories of algebras over monads were considered by Batanin--Berger \cite{BB}, Batanin--White \cite{BW}, Elmendorf--Kriz--Mandell--May \cite[Chapter~VII, \S\,4]{EKMM}, Johnson--Noel \cite[\S\,3]{JN}, Schwede--Shipley \cite{SS}, and others.
The list of references would be much longer if the extensive work made on transferred model structures since \cite[Theorem~3.3]{Crans} was to be summarized. Sufficient conditions can be found in \cite[Theorem~11.3.2]{Hirschhorn} for a transferred model structure to exist assuming that $\M$ is cofibrantly generated, and specific conditions for the case of categories of algebras over monads were given in \cite[Theorem~2.11]{BB} and in \cite[Lemma~2.3]{SS}.

An instance where a transferred model structure on algebras over a monad does \emph{not} exist will be described in Example~\ref{shocking} below.

If $\M^T$ admits a transferred model structure, then the forgetful functor $U$ is right Quillen and hence \eqref{FU} is a Quillen adjunction.
Therefore $T$ preserves weak equivalences between cofibrant objects, since $T=UF$ and $F$ preserves weak equivalences between cofibrant objects while $U$ preserves all weak equivalences.
Moreover, there is a derived adjunction
\begin{equation}
\label{factorT}
FQ:\Ho(\M)\rightleftarrows\Ho(\M^T):U,
\end{equation}
where $Q$ is a cofibrant replacement functor on $\M$, and fibrant replacement on $\M^T$ is omitted since $U$ preserves all weak equivalences. 

The adjunction \eqref{factorT} is \emph{not} equivalent, in general, to the Eilenberg--Moore adjunction of the derived monad $UFQ=TQ$ on $\Ho(\M)$.
From now on, for simplicity, we will omit the cofibrant replacement functor $Q$ from the notation, thus writing $\Ho(\M)^{T}$ instead of $\Ho(\M)^{TQ}$.
Counterexamples showing that the categories $\Ho(\M^T)$ and $\Ho(\M)^{T}$ need not be equivalent can be found in \cite{Javier1} and~\cite{JN}. 
In order to emphasize the distinction, we will call \emph{$T$-algebras up to homotopy} the objects of~$\Ho(\M)^{T}$, while we will keep calling $T$-algebras those of~$\Ho(\M^T)$.

The question of when an $f$-localization or an $A$-cellularization on $\M$ induces respectively a localization or a colocalization on $T$-algebras up to homotopy can be quickly answered using results from Section~\ref{cosection}, as follows.

\begin{theorem}
\label{easy}
For a monad $T$ acting on a model category $\M$, suppose that the category~$\M^T$ of $T$-algebras has a transferred model structure.
\begin{itemize}
\item[{\rm (a)}]
Let $f$ be a map in $\M$ such that $L_f$ exists. Then 
$L_f$ lifts to $\Ho(\M)^{T}$ if and only if $T$ sends $f$-equivalences between cofibrant objects to $f$-equivalences.
\item[{\rm (b)}]
Let $A$ be an object in $\M$ such that $C_A$ exists. Then 
$C_A$ lifts to $\Ho(\M)^{T}$ if and only if $T$ sends cofibrant $A$-cellular objects to $A$-cellular objects.
\end{itemize}
\end{theorem}

\begin{proof}
Let $Q$ be a cofibrant replacement functor on~$\M$.
Theorem~\ref{monad} tells us that $L_f$ lifts to the Eilenberg--Moore category $\Ho(\M)^{T}$ if and only if $TQ$ preserves $f$-equivalences in $\Ho(\M)$, and this happens if and only if $T$ sends  $f$\nobreakdash-equivalences between cofibrant objects in~$\M$ to $f$-equivalences. Indeed, if $g\colon X\to Y$ is an $f$-equivalence then so is the map $Qg\colon QX\to QY$, and if $T$ sends $f$-equivalences between cofibrant objects to $f$-equivalences then $TQg$ is an $f$-equivalence. Conversely, suppose that $TQ$ preserves $f$-equivalences. If $g\colon X\to Y$ is an $f$-equivalence where $X$ and $Y$ are cofibrant, then $TQg$ is an $f$-equivalence by assumption. Moreover $TQX\simeq TX$ and $TQY\simeq TY$ since $T$ preserves weak equivalences between cofibrant objects, so $Tg$ is an $f$-equivalence.

In the case of $A$-cellularizations we use Theorem~\ref{comonaddual} to infer that $C_A$ lifts to $\Ho(\M)^{T}$ if and only if $TQ$ preserves $A$-cellular objects in $\Ho(\M)$. The assertion (b) follows since $TQX\simeq TX$ for every cofibrant object~$X$.
\end{proof}

We next address the existence of liftings of $f$-localizations to the homotopy category of $T$-algebras $\Ho(\M^T)$. This lifting problem and the corresponding one for cellularizations have also been discussed in \cite{BW,White,WY1,WY2,WY3} and in~\cite{Javier3,Javier4,GRSO,GV}.

We start by formalizing the notion that a homotopical localization or a homotopical colocalization on $\M$ lift to $\Ho(\M^T)$.

\begin{defi}
\label{preserving}
Let $T$ be a monad on a model category $\M$ such that the category $\M^T$ of $T$-algebras has a transferred model structure. 
\begin{itemize}
\item[(a)]
We say that a homotopical localization $(L,l)$ on $\M$ \emph{lifts to $\Ho(\M^T)$}
if there is an endofunctor $L^T$ on $\Ho(\M^T)$ equipped with a natural transformation $l^T\colon {\rm Id}\to L^T$ and a natural isomorphism $h\colon LU\to UL^T$ such that $h\circ lU = Ul^T$ in~$\Ho(\M)$.
\item[(b)]
We say that a homotopical colocalization $(C,c)$ on $\M$ \emph{lifts to $\Ho(\M^T)$}
if there is an endofunctor $C^T$ on $\Ho(\M^T)$ equipped with a natural transformation $c^T\colon C^T\to {\rm Id}$ and a natural isomorphism $h\colon CU\to UC^T$ such that $Uc^T\circ h = cU$ in~$\Ho(\M)$.
\end{itemize}
\end{defi}

As next shown, it follows automatically that $(L^T,l^T)$ is then a localization and, correspondingly, $(C^T,c^T)$ is a colocalization.

\begin{lemma}
\label{elaprima}
If a homotopical localization $(L,l)$ lifts to $\Ho(\M^T)$ then $(L^T,l^T)$ is a localization on $\Ho(\M^T)$ and the forgetful functor $U$ preserves and reflects local objects and equivalences.
Similarly, if a homotopical colocalization $(C,c)$ lifts to $\Ho(\M^T)$ then $(C^T,c^T)$ is a colocalization on $\Ho(\M^T)$ and the forgetful functor $U$ preserves and reflects colocal objects and equivalences.
\end{lemma}

\begin{proof}
In order to prove that $(L^T,l^T)$ is an idempotent monad, it suffices to check
that $l^TL^T$ and $L^Tl^T$ are isomorphisms on $\Ho(\M^T)$. On one hand,
$h L^T\circ lUL^T = Ul^TL^T$. Moreover, $Lh\circ lLU = lUL^T\circ h$ and $lL$ is an isomorphism in $\Ho(\M)$, so $Ul^TL^T$ is also an isomorphism. Since $U$ reflects isomorphisms, $l^TL^T$ is an isomorphism.
On the other hand, by the naturality of~$h$, we have
$h L^T\circ LUl^T = UL^Tl^T\circ h$. Here $LUl^T = Lh\circ LlU$, and, since $h$ and $Ll$ are isomorphisms, it follows that $LUl^T$ is an isomorphism and hence so is $UL^Tl^T$. Again, since $U$ reflects isomorphisms, we conclude that $L^Tl^T$ is an isomorphism.

The fact that $U$ preserves and reflects local objects and equivalences follows from the isomorphism $LU\cong UL^T$ as detailed in Corollary~\ref{creating},
and the argument for colocalizations is analogous.
\end{proof}

In what follows we treat first the case of colocalizations for simplicity. For an object $A$ in a model category $\M$, we indistinctly denote by $C_A$ the $A$-cellularization functor on $\M$ and the induced colocalization on $\Ho(\M)$, although it is important not to confuse them.

\begin{theorem}
\label{cofumfumfum}
Let $T=UF$ be the Eilenberg--Moore factorization of a monad $T$ on a model category $\M$ such that the category $\M^T$ of $T$-algebras has a transferred model structure. Let $A$ be a cofibrant object in $\M$ such that the right Bousfield localizations $\M_A$ and $(\M^T)_{FA}$ exist. Then the following statements are equivalent:
\begin{itemize}
\item[{\rm (i)}]
$C_A$ lifts to $\Ho(\M^T)$.
\item[{\rm (ii)}]
$U$ sends $FA$-cellular objects to $A$-cellular objects.
\item[{\rm (iii)}]
The comparison map $\alpha_Y\colon C_AUY\to UC_{FA}Y$ is a weak equivalence for all~$Y$.
\item[{\rm (iv)}]
$C_{FA}$ is a lift of $C_A$ to $\Ho(\M^T)$.
\end{itemize}
\end{theorem}

\begin{proof}
If $(C_A,c)$ lifts to $\Ho(\M^T)$, then there is a colocalization functor
\[
C^T\colon \Ho(\M^T)\longrightarrow\Ho(\M^T)
\]
with a natural transformation $c^T\colon C^T\to {\rm Id}$ and a natural isomorphism $h\colon C_A U\to U C^T$ in $\Ho(\M)$ such that $Uc^T\circ h = cU$. 
By Lemma~\ref{elaprima}, a map $g$ in $\M^T$ is a $C^T$-equivalence if and only if $Ug$ is an $A$-equivalence, and part~(i) of Proposition~\ref{coHoFGf} tells us that $Ug$ is an $A$-equivalence if and only if $g$ is an $FA$-equivalence. Hence $C^T$ and $C_{FA}$ are colocalizations with the same class of equivalences, so there is a natural isomorphism $g\colon C^T\to C_{FA}$ in $\Ho(\M^T)$ with $c\circ g=c^T$.
On the other hand, according to Proposition~\ref{coHoFGf}, for every $T$-algebra $Y$ there is a homotopy unique and homotopy natural $A$-equivalence
\[
\alpha_Y\colon C_AUY\longrightarrow UC_{FA}Y
\] 
such that $Uc_Y\circ\alpha_Y=c_{UY}$, where we need not assume that $Y$ is fibrant since $U$ preserves all weak equivalences.
By its uniqueness up to homotopy, $\alpha_Y$ coincides with $Ug_Y\circ h_Y$ in $\Ho(\M)$ and therefore $\alpha_Y$ is an isomorphism for every $T$-algebra $Y$, thus yielding~(iii).

We next prove that (ii) and (iii) are equivalent. Indeed, $\alpha_Y$ is a weak equivalence if and only if $UC_{FA}Y$ is $A$\nobreakdash-cell\-ular. Hence $\alpha_Y$ is a weak equivalence for all $Y$ if and only if $U$ sends $FA$\nobreakdash-cell\-ular objects to $A$-cellular objects, as claimed.

Finally, (iii) implies (iv) by definition, and (iv) implies~(i).
\end{proof}

As one might have expected, the necessary and sufficient condition (ii) for the existence of a lifting of a cellularization $C_A$ to $\Ho(\M^T)$ given in Theorem~\ref{cofumfumfum} is stronger than the one given in Theorem~\ref{easy} for the existence of a lifting of $C_A$ to $\Ho(\M)^T$. 

\begin{corollary}
\label{cotwolifts}
If $C_A$ lifts to $\Ho(\M^T)$ then $C_A$ also lifts to $\Ho(\M)^{T}$.
\end{corollary}

\begin{proof}
If $C_A$ lifts to $\Ho(\M^T)$ then, by Theorem~\ref{cofumfumfum}, $U$ sends $FA$-cellular objects to $A$\nobreakdash-cell\-ular objects. 
According to part~(ii) of Proposition~\ref{coHoFGf}, $F$ sends cofibrant $A$-cellular objects to $FA$\nobreakdash-cell\-ular objects. Therefore, $T=UF$ sends cofibrant $A$-cellular objects to $A$-cellular objects, and consequently $C_A$ lifts to $\Ho(\M)^{T}$ by part~(b) of Theorem~\ref{easy}.
\end{proof}

\begin{corollary}
\label{CACTA}
If $A$ and $TA$ are cofibrant and both $C_A$ and $C_{TA}$ lift to $\Ho(\M^T)$, then there is a natural isomorphism
\[
C_A UY\cong C_{TA} UY
\]
in $\Ho(\M)$ for all $T$-algebras $Y$.
\end{corollary}

\begin{proof}
According to Theorem~\ref{cofumfumfum}, our assumptions yield natural isomorphisms
\[
C_AUY\cong UC_{FA}Y, \qquad C_{TA}UY\cong UC_{FTA}Y
\]
in $\Ho(\M)$ for all $T$-algebras $Y$. The fact that $C_A$ lifts to $\Ho(\M^T)$ implies, by Theorem~\ref{cofumfumfum}, that $U$ sends $FA$-cellular objects to $A$-cellular objects. Hence $TA=UFA$ is $A$-cellular, and Proposition~\ref{FAFTAFfFTf} implies then that the classes of $FA$-cellular objects and $FTA$-cellular objects coincide. This means that $C_{FA}\cong C_{FTA}$, hence completing the argument.
\end{proof}

A favorable situation for the use of Corollary~\ref{CACTA} is when the forgetful functor $U$ has a right adjoint. An important case when this happens will be discussed in the next section.

\begin{proposition}
\label{uf}
Suppose that the forgetful functor $U\colon\M^T\to\M$ has a Quillen right adjoint.
If $TA$ is $A$-cellular, then $C_A$ and $C_{TA}$ lift to $\Ho(\M^T)$ and there is a natural iso\-morphism
\[
C_A UY\cong C_{TA} UY
\]
in $\Ho(\M)$ for all $T$-algebras $Y$.
\end{proposition}

\begin{proof}
Since both $F$ and $U$ are Quillen left adjoints and $A$ is assumed to be cofibrant, $FA$ and $TA=UFA$ are cofibrant. Moreover, the fact that $U$ is a Quillen left adjoint implies, by part~(ii) of Proposition~\ref{coHoFGf}, that $U$ sends cofibrant $FA$-cellular objects to $TA$\nobreakdash-cell\-ular objects, and, since $U$ preserves weak equivalences, $U$ sends all $FA$-cellular objects to $TA$\nobreakdash-cellular ones. Now the assumption that $TA$ is $A$-cellular implies, by Remark~\ref{cellular}, that $TA$-cellular objects are $A$-cellular and therefore $U$ sends $FA$-cellular objects to $A$-cellular objects.
Therefore $C_A$ lifts to $\Ho(\M^T)$ by Theorem~\ref{cofumfumfum}. Moreover, according to part~(a) of Proposition~\ref{FAFTAFfFTf}, the classes of $FA$-cellular objects and $FTA$-cellular objects coincide. Hence $U$ sends $FTA$\nobreakdash-cellular objects to $TA$-cellular objects, and this implies that $C_{TA}$ also lifts to $\Ho(\M^T)$, so Corollary~\ref{CACTA} applies.
\end{proof}

We next discuss the case of homotopical localizations, which is largely similar.

\begin{theorem}
\label{fumfumfum}
Let $T=UF$ be the Eilenberg--Moore factorization of a monad $T$ on a model category $\M$ such that the category $\M^T$ of $T$-algebras has a transferred model structure. Let $f$ be a map in $\M$ between cofibrant objects such that the left Bousfield localizations $\M_f$ and $(\M^T)_{Ff}$ exist. Then the following statements are equivalent:
\begin{itemize}
\item[{\rm (i)}]
$L_f$ lifts to $\Ho(\M^T)$.
\item[{\rm (ii)}]
$U$ sends $Ff$-equivalences to $f$-equivalences.
\item[{\rm (iii)}]
The comparison map
$\beta_Y\colon L_fUY\to UL_{Ff}Y$
is a weak equivalence for all~$Y$.
\item[{\rm (iv)}]
$L_{Ff}$ is a lift of $L_f$ to $\Ho(\M^T)$.
\end{itemize}
\end{theorem}

\begin{proof}
Recall from Proposition~\ref{HoFGf} that for every $T$-algebra $Y$ there is a homotopy unique and homotopy natural map
\[
\beta_Y\colon L_fUY\longrightarrow UL_{Ff}Y
\] 
such that $\beta_Y\circ l_{UY}=Ul_Y$, and $\beta_Y$ is a weak equivalence if and only if  $Ul_Y\colon UY\to UL_{Ff}Y$ is an $f$-equivalence. Hence $\beta_Y$ is a weak equivalence for all $Y$ if and only if $U$ sends all $Ff$\nobreakdash-loc\-alization maps to $f$-equivalences, and this happens if and only if $U$ sends $Ff$\nobreakdash-equiv\-alences to $f$-equivalences. 
This proves that (ii) $\Leftrightarrow$ (iii).

The claim (i) means that there is a localization functor
\[
L^T\colon \Ho(\M^T)\longrightarrow\Ho(\M^T)
\]
with a natural transformation $l^T\colon {\rm Id}\to L^T$ and a natural isomorphism $h\colon L_f U\to U L^T$ in $\Ho(\M)$ such that $h\circ lU = U l^T$.
By Lemma~\ref{elaprima}, a $T$-algebra $Y$ is $L^T$-local if and only if $UY$ is $f$-local, and part~(i) of Proposition~\ref{HoFGf} implies that $UY$ is $f$-local if and only if $Y$ is $Ff$\nobreakdash-local (no fibrancy assumption on $Y$ is necessary, since $U$ preserves all weak equivalences). Hence $L^T$ and $L_{Ff}$ are localizations with the same essential image class, so 
there is a natural isomorphism $g\colon L^T\to L_{Ff}$ in $\Ho(\M^T)$ with $g\circ l^T=l$.
By its uniqueness up to homotopy, $\beta_Y$ coincides with $Ug_Y\circ h_Y$ in $\Ho(\M)$ and hence $\beta_Y$ is an isomorphism for every $T$-algebra $Y$, which proves~(iii).
By definition (iii) implies (iv), and (iv) implies (i).
\end{proof}

\begin{obs}
\label{toostrong}
The assumption that the model structure $(\M^T)_{Ff}$ exists is unnecessarily strong in the statement of Theorem~\ref{fumfumfum}. The arguments given in the proof remain valid to conclude that (i) and (ii) are equivalent if we only assume that $L_{Ff}$ exists on $\Ho(\M^T)$, since the existence of $\beta_Y$ is inferred in Proposition~\ref{HoFGf} by means of the model structure~$\M_f$.
\end{obs}

In the next statements we assume that left Bousfield localizations of $\M$ and $\M^T$ exist (with the weakening mentioned in Remark~\ref{toostrong} when pertinent). Proofs are omitted since they are similar to the ones given in the case of colocalizations.

\begin{corollary}
\label{twolifts}
If $L_f$ lifts to $\Ho(\M^T)$ then $L_f$ also lifts to $\Ho(\M)^{T}$.
\end{corollary}

\begin{corollary}
\label{coCACTA}
Suppose that the domains and codomains of $f$ and $Tf$ are cofibrant. 
If both $L_f$ and $L_{Tf}$ lift to $\Ho(\M^T)$, then there is a natural isomorphism
\[
L_f UY\cong L_{Tf} UY
\]
in $\Ho(\M)$ for all $T$-algebras $Y$.
\end{corollary}

\begin{proposition}
\label{couf}
Suppose that the forgetful functor $U\colon\M^T\to\M$ has a Quillen right adjoint.
If $Tf$ is an $f$-equivalence, then $L_f$ and $L_{Tf}$ lift to $\Ho(\M^T)$ and there is a natural isomorphism
\[
L_f UY\cong L_{Tf} UY
\]
in $\Ho(\M)$ for all $T$-algebras $Y$.
\end{proposition}

\begin{proof}
The domain and codomain of $Tf$ are cofibrant since $U$ preserves cofibrations and hence so does~$T$. By part~(b) of Proposition~\ref{FAFTAFfFTf}, the class of $FTf$-equivalences coincides with the class of $Ff$-equivalences. Hence, in order to infer that $L_{Tf}$ lifts to $\Ho(\M^T)$, we need to prove that $U$ sends $Ff$-equivalences between cofibrant objects to $Tf$\nobreakdash-equiv\-alences. This follows from part~(ii) of Proposition~\ref{HoFGf} since $U$ is, by assumption, a Quillen left adjoint. Hence our claim is implied by Corollary~\ref{coCACTA}.
\end{proof}

There is, however, a relevant distinction between right Bousfield localizations and left Bousfield localizations of categories of $T$-algebras equipped with transferred model structures, due to the fact that in a right-induced model structure one asks the forgetful functor to create fibrations but not cofibrations. Indeed, as shown in Theorem~\ref{MATMTFA} below, the equality of model categories
\begin{equation}
\label{nice}
(\M_A)^T=(\M^T)_{FA}
\end{equation}
always holds, while the analogous equality for $f$-localizations is not necessarily true, and in fact if it holds then $L_f$ lifts to $T$-algebras.

The ideas behind the next theorem were found by Guti\'errez--R\"ondigs--Spitzweck--{\O}stv{\ae}r in \cite{GRSO} for algebras over coloured operads, and also by Batanin--White in~\cite[Theorem~3.4]{BW} and White--Yau in \cite[Corollary~2.8]{WY3} by weakening the assumption that $\M^T$ has a transferred model structure.

\begin{theorem}
\label{MATMTFA}
Let $T=UF$ be the Eilenberg--Moore factorization of a monad $T$ on a model category $\M$ such that $\M^T$ has a transferred model structure. 
\begin{itemize}
\item[{\rm (a)}]
Let $A$ be a cofibrant object in $\M$ such that the right Bousfield localizations $\M_A$ and $(\M^T)_{FA}$ exist. Then $(\M^T)_{FA}$ is a transferred model structure for $(\M_A)^T$.
\item[{\rm (b)}]
Let $f$ be a map in $\M$ between cofibrant objects such that the left Bousfield localizations $\M_f$ and $(\M^T)_{Ff}$ exist. 
If a transferred model structure for $(\M_f)^T$ exists, then this model structure is equal to $(\M^T)_{Ff}$ and in this case $L_f$ lifts to $\Ho(\M^T)$.
\end{itemize}
\end{theorem}

\begin{proof}
For (a), part~(i) of Proposition~\ref{coHoFGf} states that $U\colon (\M^T)_{FA}\to\M_A$ preserves and reflects weak equivalences. Since the fibrations of $(\M^T)_{FA}$ are those of $\M^T$ and the fibrations of $\M_A$ are those of~$\M$, we have that $U\colon (\M^T)_{FA}\to\M_A$ also preserves and reflects fibrations, and this means that $(\M^T)_{FA}$ is a transferred model structure for~$(\M_A)^T$.

As for (b), in the adjunction
\begin{equation}
\label{badone}
F:\M_f\rightleftarrows(\M^T)_{Ff}:U
\end{equation}
the cofibrations and trivial fibrations in $\M_f$ are those of $\M$ and the cofibrations and trivial fibrations in $(\M^T)_{Ff}$ are those of $\M^T$; hence $F$ preserves cofibrations and $U$ preserves and reflects trivial fibrations. Moreover, part~(i) of Proposition~\ref{HoFGf} says that $U$ preserves and reflects fibrant objects, while part~(ii) tells us that $F$ preserves weak equivalences between cofibrant objects. 

If a transferred model structure exists for $(\M_f)^T$, then it has the same trivial fibrations and the same fibrant objects as $(\M^T)_{Ff}$, since these are created by $U$ in both cases. Hence they also have the same cofibrations ---and thus the same cofibrant objects--- and the same weak equivalences between cofibrant objects, since these are determined by the fibrant objects. Therefore they have the same weak equivalences (since cofibrant approximations are trivial fibrations) and this proves that, indeed, $(\M_f)^T=(\M^T)_{Ff}$ as model categories.
From this fact it follows that $U\colon\M^T\to\M$ sends $Ff$-equivalences to $f$-equivalences, which, according to Theorem~\ref{fumfumfum}, implies that $L_f$ lifts to $\Ho(\M^T)$. 
\end{proof}

Consequently, in order to display a case where a transferred model structure for $(\M_f)^T$ does not exist, it is enough to exhibit an $f$-localization that does not lift to $T$\nobreakdash-algebras. 
This is done at the end of the next section.

\section{Module spectra}
\label{modules}

The category $\Spectra$ of symmetric spectra over simplicial sets has a closed symmetric monoidal structure with internal function spectrum $\Hom_S(-,-)$ defined as in \cite[Definition~2.2.9]{HSS}. We consider the stable model structure \cite[Theorem~3.4.4]{HSS} on~$\Spectra$, which, according to \cite[Theorem~4.2.5]{HSS}, is Quillen equivalent to the Bousfield--Friedlander model structure on the category of ordinary spectra. If $Q$ is a cofibrant replacement functor and $R$ is a fibrant replacement functor on $\Spectra$, we call $\DF(X,Y)=\Hom_S(QX,RY)$ a \emph{derived function spectrum}.

At the same time, $\Spectra$ is a simplicial category with enrichment defined as 
\[
\Map(X,Y)_n=\Spectra(X\wedge\Delta[n]_+,Y)
\] 
for all~$n$, where $\Delta[n]_+$ denotes the standard $n$-simplex with a disjoint basepoint. Thus, $\Map(QX,RY)$ is a convenient choice of a homotopy function complex from $X$ to $Y$ in~$\Spectra$. Moreover, it follows from \cite[Lemma~6.1.2]{Hovey} or \cite[Corollary~2.2.11]{HSS} that the homotopy groups of the simplicial set $\Map(QX,RY)$ are isomorphic to those of the spectrum $\DF(X,Y)$ in non-negative dimensions, hence to those of the connective cover $\DF^c(X,Y)$ in all dimensions. As a consequence of this fact, homotopy orthogonality \eqref{enrichedorthogonal} in $\Spectra$ can be formulated in terms of $\DF^c(-,-)$, as done in \cite{Bo96,Bo99,CG,Javier3} and in many other articles.

For any ring spectrum~$E$, the category $\EMod$ of left $E$\nobreakdash-module spectra admits, by \cite[Corollary~5.4.2]{HSS}, a transferred model structure for the Eilenberg--Moore factorization
\[
F : \Spectra \rightleftarrows \EMod : U
\]
of the monad $TX=E\wedge X$ on~$\Spectra$. Here $T$ preserves cofibrant objects if we impose that $E$ be cofibrant as a spectrum, and $T$ preserves colimits since it is left adjoint to $\Hom_S(E,-)$ as $\Spectra$ is closed symmetric monoidal.

Furthermore, if $E$ is cofibrant, then $\Hom_S(E,-)$ with values in $\EMod$ is Quillen right adjoint to the forgetful functor~$U$, since for every left $E$-module $M$ and every spectrum~$Y$ there are natural bijections
\[
\Spectra(UM,Y)\cong\Spectra(E\wedge_EM,Y)\cong\EMod(M,\Hom_S(E,Y)).
\]
This appears in \cite[Lemma~III.6.5(ii)]{EKMM} and it also follows from \cite[Theorem~1.5(iii)]{Schwede}, since $E\wedge -$ preserves level cofibrations. Hence $U$ also preserves colimits.

The model category $\EMod$ of left $E$-modules is cofibrantly generated by \cite[Theorem~4.1]{SS}, locally presentable by \cite[\S\,2.78]{AR}, and proper since $U$ is both a Quillen left adjoint and a Quillen right adjoint. Hence, left and right Bousfield localizations exist on $\EMod$ as well as on~$\Spectra$.

We say that a homotopy functor on spectra is \emph{triangulated} if it preserves cofibre sequences. For a localization or a cellularization, this condition is equivalent to commuting with suspension, as shown in \cite[Theorem~2.7]{CG} and \cite[Theorem~2.9]{Javier4}.
Localizations and cellularizations need not be triangulated. However, $C_A$ is triangulated if $A=\bigvee_{i=0}^{\infty} \Sigma^{-i} W$ for some spectrum~$W$ ---this condition was also considered in~\cite[Lemma~5.7]{BR}---, and $L_f$ is triangulated if $f=\bigvee_{i=0}^{\infty} \Sigma^{i}g$ for some map~$g$. Localizations with respect to homology theories \cite{BoLoc} are triangulated.

\begin{theorem}
\label{mainone}
Let $A$ and $E$ be cofibrant symmetric spectra and suppose that $E$ is a ring spectrum. Let $U$ be the forgetful functor from left $E$-modules to spectra. If $E$ is connective or $C_A$ is triangulated, then $C_A$ lifts to left $E$-modules and
\[
C_AUY\simeq C_{U(E\wedge A)}UY\simeq UC_{E\wedge A}Y
\]
for every left $E$-module $Y$.
\end{theorem}

\begin{proof}
Since $U$ is a Quillen left adjoint, we are ready to apply Proposition~\ref{uf}, and it will suffice to show that $U(E\wedge A)$ is $A$-cellular.
For this we need to assume, as in \cite[\S\,4]{Javier2} and \cite[Theorem~4.1]{Javier4}, that $E$ is connective or $C_A$ is triangulated. If $E$ is connective, then we may use the connective cover $\DF^c(-,-)$ of the derived function spectrum instead of a homotopy function complex to infer that every $A$-equivalence $g\colon X\to Y$ induces
\begin{align*}
\DF^c(E\wedge A,X) & \simeq \DF^c(E,\DF(A,X)) \simeq \DF^c(E,\DF^c(A,X)) \\ & \simeq \DF^c(E,\DF^c(A,Y))\simeq \DF^c(E,\DF(A,Y))\simeq \DF^c(E\wedge A,Y),
\end{align*}
since $g$ induces $\DF^c(A,X)\simeq \DF^c(A,Y)$. 
If $E$ is not necessarily connective but $C_A$ is triangulated, then each $A$-equivalence $g\colon X\to Y$ induces a weak equivalence $\DF(A,X)\simeq \DF(A,Y)$ by \cite[Theorem~2.9]{Javier4}, and therefore
\[
\DF(E\wedge A,X)\simeq \DF(E,\DF(A,X))\simeq \DF(E,\DF(A,Y))\simeq \DF(E\wedge A,Y),
\]
as needed.
In conclusion, if $E$ is connective or $C_A$ is triangulated, then $C_A$ lifts to $\Ho(\EMod)$ and Proposition~\ref{uf} tells us that
$C_AUY\cong C_{U(E\wedge A)}UY$
for every cofibrant spectrum $A$ and every left $E$-module $Y$. Moreover Theorem~\ref{cofumfumfum} implies that $C_{E\wedge A}$ is a lift of $C_A$ and hence
$C_AUY\simeq UC_{E\wedge A}Y$ 
for every left $E$-module $Y$, as claimed.
\end{proof}

\begin{ex}
As an easy example, pick $A=S$ (the sphere spectrum), so that the $S$-cellular spectra are precisely the connective ones. It then follows from Theorem~\ref{mainone} that, if $E$ is a connective ring spectrum, then $C_EX\simeq C_SX\simeq X^c$ for every spectrum $X$ underlying a left $E$-module. Hence, connective $E$-modules are $E$-cellular.
\end{ex}

\begin{theorem}
\label{maintwo}
Let $E$ be a cofibrant symmetric spectrum and let $f$ be a map between cofibrant spectra. Suppose that $E$ is a ring spectrum, and let $U$ be the forgetful functor from left $E$\nobreakdash-mod\-ules to spectra. If $E$ is connective or $L_f$ is triangulated, then $L_f$ lifts to left $E$-modules and
\[
L_fUY\simeq L_{U(E\wedge f)}UY\simeq UL_{E\wedge f}Y
\]
for every left $E$-module $Y$.
\end{theorem}

\begin{proof}
As in Theorem~\ref{mainone}, we use the fact that the forgetful functor $U$ has a Quillen right adjoint, namely $\Hom_S(E,-)$.
In view of Proposition~\ref{couf}, it is enough to prove that $U(E\wedge f)$ is an $f$-equivalence.
This happens provided that $E$ is connective or $L_f$ is triangulated, by a similar argument as in the proof of Theorem~\ref{mainone}; see also \cite[Theorem~2.7]{CG}. Hence, under these assumptions, $L_f$ lifts to $\Ho(\EMod)$ and Proposition~\ref{couf} together with Theorem~\ref{fumfumfum} imply that
\[
L_fUY\simeq L_{U(E\wedge f)}UY\simeq UL_{E\wedge f}Y
\]
for every left $E$-module $Y$, which is the corresponding homotopical version of~\eqref{Sotimesf}.
\end{proof}

The next counterexample is based on \cite[Example~4.4]{CG} and shows that the hypotheses made in Theorem~\ref{mainone} and Theorem~\ref{maintwo} are necessary.

\begin{ex}
\label{shocking}
Let $K(n)$ be the $n$th Morava $K$-theory spectrum at a prime $p$ for any $n\ge 1$, and let $k(n)$ be its connective cover. Then $H\ZZ/p\wedge K(n)=0$ while $H\ZZ/p\wedge k(n)\ne 0$, as proved in \cite[Theorem~2.1]{Ravenel}. This implies that $k(n)$ is not a homotopy retract of $K(n)\wedge k(n)$ and therefore $k(n)$ cannot be a left $K(n)$-module. Here $k(n)\simeq C_SK(n)$ where $S$ denotes the sphere spectrum; consequently, $C_S$ does not lift to $K(n)$-modules.
This counterexample shows that a cellularization $C_A$ of spectra need not lift to $E$-modules if the assumptions that $E$ is connective or $C_A$ is triangulated in Theorem~\ref{mainone} both fail to hold. Note that $C_S$ is not triangulated since it converts the cofibre sequence $\Sigma^{-1}S\to 0\to S$ into $0\to 0\to S$.

Similarly, if one removes the assumption that $E$ is connective or $L_f$ is triangulated from Theorem~\ref{maintwo}, then it need no longer be true that $L_f$ lifts to $E$-module spectra. Indeed, the cofibre of the canonical map $k(n)\to K(n)$ is the Postnikov section $P_{-1}K(n)$;
since $ H\ZZ/p\wedge P_{-1}K(n)\ne 0$, we find that $P_{-1}K(n)$ is not a homotopy retract of $K(n)\wedge P_{-1}K(n)$ and this implies that $P_{-1}K(n)$ cannot be a left $K(n)$-module. But $P_{-1}K(n)=L_fK(n)$ with $f\colon S\to 0$.
Thus, if $f\colon S\to 0$ and $E=K(n)$, then $L_f$ does not lift to $E$-modules.
\end{ex}

Example~\ref{shocking} also shows that a transferred model structure for $(\M_f)^T$ need not exist for a monad $T$ acting on a model category~$\M$. This is a consequence of part~(b) of Theorem~\ref{MATMTFA}, since in Example~\ref{shocking} we have exhibited a case where $L_f$ does not lift to~$\Ho(\M^T)$. 

\section{Loop spaces and infinite loop spaces}
\label{operads}

In this section we consider simplicial operads (that is, operads taking values in simplicial sets) acting on pointed simplicial sets endowed with the Cartesian product. 
This choice is due to the fact that important monads such as the James construction, the $Q$-construction, or the infinite symmetric product, which will be used in this section, involve products and basepoints in their definition. 

Another reason is that in the unpointed category cellularizations are trivial due to the fact that every space $X$ is a retract of $\map(A,X)$ if $A$ is nonempty, as pointed out in \cite[Remark~2.A.1.1]{DF} and \cite[Remark~3.1.10]{Hirschhorn}. In the case of localizations, there are no essential differences between working with basepoints or without them. In other words, $\map_*(-,-)$ or $\map(-,-)$ can indistinctly be used to test homotopy orthogonality for localizations of pointed connected spaces \cite[Lemma~2.1]{Bo97}, and hence if a map $g$ of pointed spaces is an $f$-equivalence then both $g\times K$ and $g\wedge K$ are $f$-equivalences for every pointed simplicial set~$K$. However, if a pointed space $X$ is $A$-cellular then $X\wedge K$ is $A$-cellular for all $K$ but $X\times K$ need not be (since it has $K$ as a retract).

For a simplicial operad $P$ and every~$n$, the (unpointed) simplicial set $P(n)$ is equipped with an action of the symmetric group~$\Sigma_n$. We assume, as usual, the existence of a unit element $u\in P(1)$, and we will also assume that operads are \emph{reduced}, meaning that $P(0)$ is a single point, denoted by~$*$. 

If $P$ is such an operad and $X$ is a simplicial set with a basepoint $x_0$ then a \emph{$P$-algebra structure} on $X$ is a morphism of operads $P\to\End(X)$ where $\End(X)(n)$ is the based function complex $\map_*(X^n,X)$, as in \cite[\S\,1]{May2009}. 
Here we denote by $X^n$ the $n$\nobreakdash-fold Cartesian product of $X$ with itself, which is meant to be a point if $n=0$, and pick $(x_0,\dots,x_0)$ as basepoint of $X^n$ if $n\ne 0$. The operad $\End(X)$ is reduced and the image of the unit element $u\in P(1)$ 
under the structure map $P(1)\to\map_*(X,X)$ is assumed to be the identity map.

For a simplicial operad $P$ acting on pointed simplicial sets through basepoint-preserving maps, as explained in \cite{May2009}, the $P$-algebras coincide with the algebras over the \emph{reduced} monad $T$ where $TX$ is defined, for a pointed simplicial set $X$, as the quotient of
\begin{equation}
\label{operad2}
\coprod_{n\ge 0} P(n)\times_{\Sigma_n}X^{n}
\end{equation}
by identifiying 
$(w,s_iy)\in P(n)\times X^{n}$
with
$(\sigma_i w,y)\in P(n-1)\times X^{n-1}$
for $1\le i\le n$ and all $w\in P(n)$ and $y\in X^{n-1}$. Here the map $s_i\colon X^{n-1}\to X^{n}$ inserts the basepoint into the $i$-th place and $\sigma_i\colon P(n)\to P(n-1)$ is the $i$-th degeneracy, defined as 
\[
\sigma_iw=\gamma_i(w,u,\dots,u,*,u,\dots,u), 
\]
where
\[
\gamma_i\colon
P(n)\times P(1)^{i-1}\times P(0)\times P(1)^{n-i}\longrightarrow P(n-1)
\] 
is among the multiplication maps of the operad $P$; see \cite[\S\,4]{May2009}. Thus,
in particular, $P(0)$ is identified with $(w,x_0,\dots,x_0)\in P(n)\times X^n$ for every $n$ and all $w\in P(n)$. The unit map $\eta_X\colon X\to TX$ of the monad sends each element $x$ to $(u,x)\in P(1)\times X$. 

The same happens for non-symmetric operads by discarding the $\Sigma_n$-action from~\eqref{operad2}. For example, if $\Ass$ is the (non-symmetric) unital associative operad, for which $A(n)$ is a single point for all~$n$, then the associated reduced monad on pointed simplicial sets is the James construction \cite{James}.

\begin{lemma}
\label{great}
If $T$ is the reduced monad associated with a simplicial operad~$P$ acting on pointed simplicial sets, then $T$ preserves $f$\nobreakdash-equiv\-alences for every map~$f$.
\end{lemma}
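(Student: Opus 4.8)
The plan is to split $T$ into homogeneous summands, replace $f$ by a monomorphism, and reduce the lemma to a single closure property of $f$\nobreakdash-acyclic objects, which is then proved by an induction over the fat diagonal. Throughout, an object $X$ is called \emph{$f$\nobreakdash-acyclic} if $\map(X,Z)\simeq\ast$ for every $f$\nobreakdash-local $Z$; by orthogonality against the $f$\nobreakdash-local objects such objects are closed under homotopy colimits, under smashing with an arbitrary pointed simplicial set, and satisfy two\nobreakdash-out\nobreakdash-of\nobreakdash-three in homotopy cofibre sequences, and a cofibration with $f$\nobreakdash-acyclic cofibre is an $f$\nobreakdash-equivalence (all of these from the fact that $\map(-,Z)$ sends homotopy colimits to homotopy limits and cofibre sequences to fibre sequences, together, for the smash statement, with the standard fact that $Z^{K}$ is again $f$\nobreakdash-local whenever $Z$ is). Both the reduced and the unreduced monad have the form $X\mapsto\bigvee_{n\ge 0}Q(n)\wedge_{\Sigma_{n}}X^{\wedge n}$ for suitable pointed $\Sigma_{n}$\nobreakdash-simplicial sets $Q(n)$ (with $Q(n)=P(n)_{+}$ in the unreduced case, and $Q(n)=P(n)$ with its degeneracies collapsed in the reduced case), so, since $f$\nobreakdash-equivalences are closed under wedges, it suffices to show that each functor $T_{n}X=Q(n)\wedge_{\Sigma_{n}}X^{\wedge n}$ preserves $f$\nobreakdash-equivalences. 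Finally, factoring an arbitrary $f$\nobreakdash-equivalence as a monomorphic $f$\nobreakdash-equivalence followed by a trivial fibration of pointed simplicial sets, and noting that the latter has a section which is itself a monomorphism and a weak equivalence (hence an $f$\nobreakdash-equivalence), one reduces --- since $f$\nobreakdash-equivalences compose and $a$ is an $f$\nobreakdash-equivalence whenever $ab=\mathrm{id}$ with $b$ one --- to showing that $T_{n}$ sends a monomorphism $i\colon A\hookrightarrow B$ that is an $f$\nobreakdash-equivalence to an $f$\nobreakdash-equivalence.

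For such an $i$, put $C=B/A$; the fibre sequence $\map(C,Z)\to\map(B,Z)\to\map(A,Z)$ for $f$\nobreakdash-local $Z$ shows that $C$ is $f$\nobreakdash-acyclic. I would then invoke the classical $\Sigma_{n}$\nobreakdash-equivariant filtration by simplicial subsets $A^{\wedge n}=F_{0}\subseteq\cdots\subseteq F_{n}=B^{\wedge n}$ whose subquotients are $\Sigma_{n}{}_{+}\wedge_{\Sigma_{r}\times\Sigma_{n-r}}\!\bigl(C^{\wedge r}\wedge A^{\wedge(n-r)}\bigr)$. Applying the left adjoint $Q(n)\wedge_{\Sigma_{n}}(-)$, which preserves equivariant monomorphisms and cofibres, turns this into a filtration of $T_{n}B$ relative to $T_{n}A$ with layers $Q(n)\wedge_{\Sigma_{r}\times\Sigma_{n-r}}\!\bigl(C^{\wedge r}\wedge A^{\wedge(n-r)}\bigr)$. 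Since $f$\nobreakdash-equivalences compose, it then suffices to prove that each layer is $f$\nobreakdash-acyclic for $r\ge 1$.

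This is the crux, which I would isolate as follows: if $C$ is $f$\nobreakdash-acyclic, $j\ge 1$, $Y$ is any pointed simplicial set, $\Gamma$ is a finite group acting on $C^{\wedge j}\wedge Y$ so as to permute the smash factors of $C^{\wedge j}$ (possibly through a quotient group), and $K$ is any $\Gamma$\nobreakdash-simplicial set, then $K\wedge_{\Gamma}\bigl(C^{\wedge j}\wedge Y\bigr)$ is $f$\nobreakdash-acyclic; this gives the layers above on taking $\Gamma=\Sigma_{r}\times\Sigma_{n-r}$, $Y=A^{\wedge(n-r)}$ and $K=Q(n)$. Realizing $K$ as a $\Gamma$\nobreakdash-CW complex presents $K\wedge_{\Gamma}(-)$ of that object as a homotopy colimit of orbit pieces $\bigl(C^{\wedge j}\wedge Y\bigr)/\Gamma'$ with $\Gamma'\le\Gamma$, so it is enough to treat $K=\ast$; and dividing out the kernel of the action on $C^{\wedge j}$ (which acts only on $Y$) reduces to the case of a faithful action, $\Gamma\le\Sigma_{j}$. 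Now induct on $j$: for $j=1$, $\Gamma$ is trivial and $C\wedge Y$ is $f$\nobreakdash-acyclic because $C$ is. For $j\ge 2$, use the $\Gamma$\nobreakdash-equivariant cofibre sequence $\Delta\hookrightarrow C^{\wedge j}\to C^{\wedge j}/\Delta$, where $\Delta$ is the fat diagonal --- the simplicial subset of tuples with two equal coordinates; smashing with $Y$ and passing to $\Gamma$\nobreakdash-orbits keeps it a cofibre sequence, since all the maps are equivariant monomorphisms. The subobject term $\bigl(\Delta\wedge Y\bigr)/\Gamma$ is $f$\nobreakdash-acyclic by the inductive hypothesis, because $\Delta$ is assembled by a homotopy colimit from copies of $C^{\wedge i}$ with $1\le i<j$ carrying permutation actions of subgroups of $\Sigma_{i}$. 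For the quotient term, $\Gamma$ now acts freely away from the basepoint on $C^{\wedge j}/\Delta$, hence on $\bigl(C^{\wedge j}/\Delta\bigr)\wedge Y$, so its strict orbits coincide with its homotopy orbits; and $\bigl(C^{\wedge j}/\Delta\bigr)\wedge Y$ is $f$\nobreakdash-acyclic (as $C^{\wedge j}$ and $\Delta$ are, being respectively a smash power and a homotopy colimit of smash powers of the $f$\nobreakdash-acyclic object $C$), so its homotopy orbits are $f$\nobreakdash-acyclic. Both outer terms being $f$\nobreakdash-acyclic forces the middle term $\bigl(C^{\wedge j}\wedge Y\bigr)/\Gamma$ to be $f$\nobreakdash-acyclic.

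The one genuinely delicate point is this last induction, i.e.\ carrying a quotient by a non\nobreakdash-free symmetric group action through the class of $f$\nobreakdash-acyclic objects. When $P$ is $\Sigma_{n}$\nobreakdash-free in every arity --- for instance the associative operad, for which $T$ is the James construction --- the action on $C^{\wedge n}$ is already free, the filtration layers are plain homotopy orbits of $f$\nobreakdash-acyclic objects, and no induction is needed; the real content is the commutative operad, where $T$ is the infinite symmetric product, and more generally any arity with nontrivial isotropy, which is precisely what the fat\nobreakdash-diagonal induction disposes of without ever leaving the class of permutation actions on smash powers. The remaining points --- spelling out the homotopy\nobreakdash-colimit decompositions of $\Delta$ and of $K\wedge_{\Gamma}(-)$, checking the equivariant filtration, and verifying that the degeneracy\nobreakdash-collapsed $Q(n)$ behaves exactly like $P(n)_{+}$ --- are routine bookkeeping.
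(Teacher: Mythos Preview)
Your argument treats the monad as though it were built from smash powers, but the monads in question---the James construction and $SP^{\infty}$ being the guiding examples---are built from the \emph{Cartesian} monoidal structure on pointed simplicial sets: one has $TX=\coprod_{n\ge 0}P(n)\times_{\Sigma_n}X^{n}$, not $\bigvee_{n}Q(n)\wedge_{\Sigma_n}X^{\wedge n}$. The James space $JX$ is not $\bigvee_{n}X^{\wedge n}$, and $SP^{\infty}X$ is not $\bigvee_{n}X^{\wedge n}/\Sigma_n$; only the associated graded pieces of their word-length filtrations have that shape. This is not a cosmetic slip: your whole strategy rests on the closure of $f$\nobreakdash-acyclic objects under smashing with an arbitrary pointed space, and the Cartesian analogue of that closure is simply false---if $C$ is $f$\nobreakdash-acyclic and $Y$ is not, then $Y$ is a pointed retract of $C\times Y$ via $y\mapsto(\ast,y)$ and the projection, so $C\times Y$ cannot be $f$\nobreakdash-acyclic. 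Hence the cofibre-and-filtration route, as written, does not apply to the monad actually under discussion.

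The paper's proof takes a quite different line and never passes through $f$\nobreakdash-acyclic objects. It uses two ingredients you did not invoke: first, Farjoun's theorem that $L_f$ preserves finite products, so that $W\times g^{n}\colon W\times X^{n}\to W\times Y^{n}$ is an $f$\nobreakdash-equivalence whenever $g$ is; second, the Elmendorf-type fact (from \cite[4.A]{DF}) that the strict orbit space $P(n)\times_{\Sigma_n}X^{n}$ is a homotopy colimit, over the opposite of the orbit category of $\Sigma_n$, of the fixed-point diagram $\Sigma_n/G\mapsto P(n)^{G}\times(X^{n})^{G}$. Since $(X^{n})^{G}\cong X^{m}$ for some $m\le n$, each term of this diagram is carried to an $f$\nobreakdash-equivalence by~$g$, and a homotopy colimit of $f$\nobreakdash-equivalences is an $f$\nobreakdash-equivalence. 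Your fat-diagonal induction is in spirit a cofibre-sequence surrogate for this orbit-category decomposition, and it may well prove the analogous lemma for the smash-product monad; but for the Cartesian monad you would need to replace the failed ``acyclics are $\wedge$-closed'' step by something like ``$L_f$ preserves products,'' at which point the paper's direct argument is both shorter and cleaner.
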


\begin{proof}
For a pointed simplicial set $X$, we write $TX$ as a homotopy colimit of the partial sums 
\[
T_kX=\coprod_{0\le n\le k}P(n)\times_{\Sigma_n}X^n
\]
subject to the same identifications as in \eqref{operad2}. Thus, $T_0X=P(0)$, $T_1X\cong P(1)_+\wedge X$, and $T_kX$ is a pointed simplicial set such that
\[
T_kX/T_{k-1}X\cong P(k)_+\wedge_{\Sigma_k}X^{\wedge k},
\]
where $X^{\wedge k}$ denotes $X\wedge\cdots\wedge X$ with $k$ factors.
Hence for each $k$ and every pointed simplicial set $Z$ there is a Kan fibre sequence
\begin{equation}
\label{fibration}
\map_*(P(k)_+\wedge_{\Sigma_k}X^{\wedge k},Z)\longrightarrow
\map_*(T_{k}X,Z)\longrightarrow
\map_*(T_{k-1}X,Z).
\end{equation}

Since the smash product of two $f$-equivalences is an $f$-equivalence, if $g\colon X\to Y$ is an $f$-equivalence then so is the induced map
\begin{equation}
\label{inducedmap}
W\wedge X\wedge \cdots \wedge X\longrightarrow
W\wedge Y\wedge \cdots  \wedge Y
\end{equation}
for every finite number of factors and every pointed simplicial set~$W$, where \eqref{inducedmap} is the identity on the first factor and $g$ on the other factors.

If the operad $P$ is non-symmetric then we can omit the $\Sigma_n$-action and \eqref{fibration} 
proves inductively that if $g\colon X\to Y$ is an $f$-equivalence then $T_kg$ is an $f$-equivalence for every $k$ and consequently $Tg$ is also an $f$-equivalence.

If $P$ is symmetric, we need to use the fact that the quotient $P(k)_+\wedge_{\Sigma_k}X^{\wedge k}$ is a colimit over $\Sigma_k$ (viewed as a small category), and it is also a homotopy colimit if $\Sigma_k$ acts freely on $P(k)$, but not otherwise. However, $P(k)_+\wedge_{\Sigma_k}X^{\wedge k}$ is a homotopy colimit of a (free) diagram indexed by the opposite of the orbit category of $\Sigma_k$, where the value of the diagram at $\Sigma_k/G$ is the fixed-point space 
\[
P(k)^G_+\wedge (X^{\wedge k})^G;
\] 
cf.\,\cite[\S\,4.A.4]{DF}. Since each space $(X^{\wedge k})^G$ is homeomorphic to a product $X^{\wedge m}$ with $m\le k$, we obtain that the map $Tg\colon TX\to TY$ is a homotopy colimit of $f$-equivalences, and therefore it is itself an $f$-equivalence.
\end{proof}

Essentially the same argument works for cellularizations.
We need the assumption that $A$ be connected in order to avoid triviality, since $S^0$ is a retract of every non-connected simplicial set and all simplicial sets are $S^0$-cellular.

\begin{lemma}
\label{cogreat}
If $T$ is the reduced monad associated with a simplicial operad~$P$ acting on pointed simplicial sets and $A$ is connected, then $T$ preserves $A$\nobreakdash-cell\-ular simplicial sets.
\end{lemma}

\begin{proof}
If $X$ is $A$-cellular, then every finite smash product
$
P(k)_+\wedge X^{\wedge k}
$
is $A$-cellular for $k\ge 1$ by \cite[Theorem~2.D.8]{DF}, while $P(0)=*$ by assumption, which is also $A$-cellular. The proof continues similarly as in Lemma~\ref{great}.
\end{proof}

\begin{ex}
The infinite symmetric product \cite{DT}, denoted by $SP^{\infty}$, is the reduced monad associated with the commutative operad.
Its algebras are the commutative monoids. The argument given in the proof of Lemma~\ref{great} was used in \cite[Theorem~1.3]{CRT} to prove that $SP^{\infty}$ preserves $f$\nobreakdash-equiv\-alences for every map~$f$. Moreover, it was shown in \cite[Proposition~1.1]{CRT} that a space $X$ underlies an $SP^{\infty}$-algebra in the pointed homotopy category if and only if $X$ is a GEM, i.e., a weak product of abelian Eilenberg--Mac\,Lane spaces; moreover, in this case the $SP^{\infty}$-algebra structure on $X$ is unique up to isomorphism. Therefore, by Lemma~\ref{great} and part~(a) of Theorem~\ref{easy}, every $f$-localization preserves GEMs (as first shown in \cite[Chapter~4]{DF}) and defines in fact a localization on the homotopy category of GEMs. Similarly, every $A$-cellularization preserves GEMs and defines a colocalization on them by Lemma~\ref{cogreat} and part~(b) of Theorem~\ref{easy}.
\end{ex}

\begin{obs}
Lemma~\ref{great} also holds for the \emph{unreduced} monad $\tilde T$ associated with a simplicial operad~$P$, that is,
\[
\tilde TX=\coprod_{n\ge 0} P(n)\times_{\Sigma_n}X^{n}
\]
without any basepoint identifications, whose algebras are also the $P$-algebras \cite[\S\,4]{May2009}. The proof proceeds with the same argument as in the proof of Lemma~\ref{great}, using the fact that if $X\to Y$ is an $f$-equivalence then $W\times X^n\to W\times Y^n$ is also an $f$-equivalence for every simplicial set $W$ and all~$n$.
However, Lemma~\ref{cogreat} is not true for unreduced monads, since, for an $A$-cellular space $X$, a product $W\times X$ is not $A$-cellular unless $W$ is itself $A$-cellular.
Still, Lemma~\ref{cogreat} holds for unreduced monads if we assume that each $P(n)$ and all fixed-point spaces $P(n)^G$ are contractible or empty for every subgroup $G\subseteq\Sigma_n$. This condition is trivially satisfied if $P(n)$ is a single point for all~$n$ and also if each $P(n)$ is contractible and $\Sigma_n$ acts freely on it. This is the case, for instance, for the commutative operad and its cofibrant approximations.
\end{obs}

We next address liftings of localizations $L_f$ and cellularizations $C_A$ to categories of algebras over simplicial operads. The existence of a transferred model structure for such categories of algebras is guaranteed by results in~\cite{BM2}. 
In order to apply Theorem~\ref{cofumfumfum} and Theorem~\ref{fumfumfum} to the associated monad $T=UF$, we need to prove that the forgetful functor $U$ sends $Ff$-equivalences of $T$\nobreakdash-alg\-ebras to $f$-equivalences of spaces for every basepoint-preserving map~$f$, and that $U$ sends $FA$-cellular $T$-algebras to $A$-cellular spaces for every pointed connected space~$A$. Both statements would be straightforward if $U$ preserved homotopy colimits. However, in general, $U$ only preserves \emph{sifted} colimits (including filtered ones and reflexive coequalizers), since $T$ commutes with these due to the fact that sifted colimits commute with finite products of simplicial sets; cf.\,\cite[Proposition~2.5]{JN}.

To surmount this difficulty, we rely on a method used by Guti\'errez--R\"ondigs--Spitzweck--{\O}stv{\ae}r in \cite[\S\,3]{GRSO} with a similar purpose.
For a $T$-algebra $X$, we consider the \emph{standard simplicial resolution} of $X$ by free $T$-algebras (compare with May's two-sided bar construction \cite[\S\,9]{May}), defined as 
\[
B_nX=(FU)^{n+1}X
\] 
for $n\ge 0$. Thus $B_*X$ is a simplicial $T$-algebra with face and degeneracy maps coming from the unit and the counit of the adjunction between $F$ and~$U$.
Moreover, the counit $FUX\to X$ yields a map $B_*X\to X$ of simplicial $T$-algebras, where $X$ is treated as a constant simplicial $T$-algebra.
The monad $T$ commutes with geometric~realization since $T$ is defined by means of a coend formula, namely \eqref{operad2}, and~geom\-etric realization is itself a coend. It then follows that the forgetful functor $U$ also commutes with geometric realization by the argument given in \cite[Proposition~3.12]{JN}. 
This implies that the map $B_*X\to X$ induces a weak equivalence $|B_*X|\simeq X$ of $T$-algebras, since $U$ reflects weak equivalences and the map $U|B_*X|\to UX$ has a left homotopy inverse induced by the unit $UX\to UFUX$; see \cite[Proposition~3.13]{JN}.

\begin{lemma}
\label{summer}
Let $T$ be the reduced monad associated with a simplicial operad acting on pointed simplicial sets, and let $T=UF$ be its Eilenberg--Moore factorization.
\begin{itemize}
\item[{\rm (a)}]
Let $f$ be a basepoint-preserving map and let $\mathcal{G}$ be a collection of maps of $T$-algebras such that $Ug$ is an $f$-equivalence for every $g\in\mathcal{G}$. Then $Uh$ is an $f$-equivalence for every $h$ in the closure of $\mathcal{G}$ under pointed homotopy colimits.
\item[{\rm (b)}]
Let $A$ be a pointed connected simplicial set, and let $\mathcal{D}$ be a collection of $T$-algebras such that $UD$ is $A$-cellular for every $D\in\mathcal{D}$. Then $UX$ is $A$-cellular for every $X$ in the closure of $\mathcal{D}$ under pointed homotopy colimits.
\end{itemize}
\end{lemma}

\begin{proof}
We first prove~(b).
Suppose inductively that $X\simeq\hocolim_{i\in I} D_i$ where $I$ is a small category and $UD_i$ is $A$\nobreakdash-cell\-ular for every~$i$. If we denote $J_n=\hocolim_{i\in I} B_n(D_i)$, then, since geometric realization commutes with colimits,
\[
|J_*|=|\hocolim_{i\in I} B_*(D_i)|\simeq\hocolim_{i\in I} |B_*(D_i)|\simeq \hocolim_{i\in I} D_i\simeq X.
\] 
Moreover, since $U$ preserves weak equivalences and commutes with geometric realization, we find that $UX\simeq U|J_*|\simeq |UJ_*|$.
Hence, in order to infer that $UX$ is $A$-cellular, it is enough to prove that $UJ_n$ is $A$-cellular for all~$n$. Indeed, as $F$ commutes with colimits,
\[
UJ_n= U\hocolim_{i\in I}\, (FU)^{n+1}D_i\simeq UF\hocolim_{i\in I}\, (UF)^{n}UD_i=T\hocolim_{i\in I}\, T^{n}UD_i.
\] 
Since $UD_i$ is $A$-cellular for every $i$ and $T$ preserves $A$-cellular simplicial sets by Lemma~\ref{cogreat}, we conclude that $UJ_n$ is $A$-cellular, as needed. This proves part~(b). 

The proof of part (a) follows the same argument in the category of maps of $T$-algebras, using that $T$ preserves $f$-equivalences by Lemma~\ref{great}. 
\end{proof}

To avoid ambiguity with ``pushouts of maps'', note that the left-hand square
\[
\xymatrix{
V\ar[r]^f
\ar[d]_a & W \ar[d]^b \\
X\ar[r]^g
& Y}
\hspace{2cm}
\xymatrix@C=4pc{
{\rm id}_V\ar[r]^{({\rm id}_V,\,f)}
\ar[d]_{(a,a)} & f \ar[d]^{(a,b)} \\
{\rm id}_X\ar[r]^{({\rm id}_X,\,g)}
& g}
\]
is a pushout square in a category $\mathcal{C}$ if and only if
the right-hand square is a pushout square in the category of arrows of~$\mathcal{C}$.

\begin{theorem}
\label{maybe}
Let $T$ be the reduced monad associated with a simplicial operad $P$ acting on pointed simplicial sets.
\begin{itemize}
\item[{\rm (a)}]
For every basepoint-preserving map $f\colon V\to W$, the functor $L_f$ lifts to $\Ho(\Ssets_*^T)$.
\item[{\rm (b)}]
For every pointed connected simplicial set $A$ the functor $C_A$ lifts to $\Ho(\Ssets_*^T)$.
\end{itemize}
\end{theorem}

\begin{proof}
Let $T=UF$ be the Eilenberg--Moore factorization of~$T$. 
We aim to apply Theorem~\ref{cofumfumfum} and Theorem~\ref{fumfumfum}. The category $\Ssets_*^T$ of $T$-algebras has a transferred model structure by \cite[Proposition~4.1(c)]{BM2}, which is cofibrantly generated by \cite{Crans} and locally presentable by \cite[\S\,2.78]{AR}, since $T$ preserves filtered colimits. It is right proper as $U$ preserves limits, fibrations, and weak equivalences, and consequently right Bousfield localizations $(\Ssets_*^T)_{FA}$ exist for all~$A$. 

It is not known to the authors if left properness of $\Ssets_*^T$ holds in general, although it does if the given operad $P$ is cofibrant, according to \cite[Theorem~4.3]{Spitzweck}, and also under the assumptions made in \cite[Definition~3.1]{Muro3}, which hold for the unital associative operad among others.
However, for our purposes it will be enough that the model category $\Ssets_*^T$ be \emph{Quillen equivalent} to a left proper combinatorial model category, and this is guaranteed by \cite[Corollary~1.2]{Dugger}. This fact ensures that, for every map~$f$, a localization exists on $\Ho(\Ssets_*^T)$ whose equivalences are the $Ff$-equivalences, and this suffices to provide a lifting of $L_f$ to $\Ho(\Ssets_*^T)$ if the remaining assumptions in Theorem~\ref{fumfumfum} are fulfilled (see Remark~\ref{toostrong}).

Thus, for part (a) we need to prove that the forgetful functor $U$ sends $Ff$-equivalences of $T$\nobreakdash-alg\-ebras to $f$-equivalences of spaces, and for part (b) we want to see that $U$ sends $FA$-cellular $T$-algebras to $A$-cellular spaces. 
We treat the latter first. 
The class of $FA$-cell\-ular $T$-algebras is the smallest class of $T$-algebras containing $FA$ and closed under pointed homotopy colimits \cite[\S\,5.5]{Hirschhorn}. Since $UFA=TA$ is $A$-cellular by Lemma~\ref{cogreat}, the result follows from part (b) of Lemma~\ref{summer} with $\mathcal{D}=\{FA\}$.

As for (a), we note that, since $T$ preserves $f$-equivalences by Lemma~\ref{great}, the $Ff$-equiv\-alences coincide with the $FTf$-equivalences by Proposition~\ref{FAFTAFfFTf}.
What we will next prove, for technical convenience, is that $U$ sends $FTf$-equivalences to $f$-equivalences.
In fact it is enough to prove that $U$ sends all localization maps $l_X\colon X\to L_{FTf}X$ to $f$-equivalences, since if $g\colon X\to Y$ is any $FTf$-equivalence then $l_Y\circ g=L_{FTf}(g)\circ l_X$ and $L_{FTf}(g)$ is a weak equivalence.

Each localization map $l_X$ is constructed as a possibly transfinite composite of homotopy pushouts of generating trivial cofibrations in $\Ssets_*^T$ and \emph{horns} on $FTf$ with $n\ge 0$:
\begin{equation}
\label{horn}
\lambda_{n,f}\colon (FTV\otimes \Delta[n])\;\textstyle\coprod_{FTV\otimes\partial\Delta[n]}\;(FTW\otimes\partial\Delta[n])
\longrightarrow FTW\otimes\Delta[n],
\end{equation}
where we are using the simplicial structure of $\Ssets_*^T$ described in \cite[Proposition~2.14]{JN}. Specifically, for a $T$-algebra $X$ with structure map $\alpha\colon TUX\to UX$ and a simplicial set~$K$, the $T$-algebra $X\otimes K$ is defined by means of a reflexive coequalizer:
\[
\xymatrix@1{
F(TUX\times K)\hspace{1.2cm} \ar @/^1pc/ [r]^{F(\alpha\times K)} \ar @/_1pc/ [r]^{\beta} & \hspace{1.2cm} F(UX\times K) \;\; \ar[r] & \;\; X\otimes K
}
\]
where $\beta$ is adjunct to the map $TUX\times K\to T(UX\times K)$ given by the fact that $T$ is a simplicial functor, and the common section is $F(\eta\times K)$ where $\eta\colon UX\to TUX$ is the unit map of~$T$. In particular, $FUY\otimes K\cong F(UY\times K)$ for all $Y$ and~$K$; cf.\,\cite[Lemma~3.8]{GKR}.

Therefore $U(FTf\otimes K)\cong UF(Tf\times K)=T(Tf\times K)$ is an $f$-equivalence for every $K$, since $T$ preserves $f$-equivalences by Lemma~\ref{great}.

Then Lemma~\ref{summer} tells us that $Uh$ is an $f$-equivalence for every map $h$ that can be constructed from maps of the form $FTf\otimes K$ by means of homotopy pushouts. This implies that $U(\lambda_{n,f})$ is an $f$-equivalence for every horn $\lambda_{n,f}$, since $\lambda_{n,f}\circ g=FTf\otimes\Delta[n]$ with $g$ a pushout of $FTf\otimes\partial\Delta[n]$ along the cofibration $FTV\otimes\partial\Delta[n]\to FTV\otimes\Delta[n]$. Finally, as $l_X$ can be constructed from horns and weak equivalences by means of homotopy pushouts, we conclude that $Ul_X$ is an $f$-equivalence using Lemma~\ref{summer} again.
\end{proof}

In what follows, $\Omega$ denotes the derived loop functor on pointed simplicial sets, that is, $\Omega X=\map_*(\mathbb{S}^1,RX)$, where $\mathbb{S}^1=\Delta[1]/\partial\Delta[1]$ and $R$ is a fibrant replacement functor.
Let $\Ass$ be the (non-symmetric) unital associative operad, for which $\Ass(n)$ is a single point for all~$n$, and let $\varphi\colon A_{\infty}\to\Ass$ be a cofibrant resolution \cite{Muro1,Muro2}.
If we let $\Ass$ act on pointed simplicial sets then the corresponding algebras are the monoids and the associated reduced monad is the James  construction \cite{James}.

The morphism of operads $\varphi\colon A_{\infty}\to\Ass$ yields a Quillen equivalence 
\[
\varphi_{!} : A_{\infty}\text{-alg}\rightleftarrows A\text{-alg}:\varphi^*,
\]
so the homotopy category of monoids is equivalent to the homotopy category of $A_{\infty}$-algebras in pointed simplicial sets.
Moreover, the classifying space functor $B$ is part of an adjunction
\begin{equation}
\label{BOmega}
B:\Ho(A_{\infty}\text{-alg})\rightleftarrows\Ho(\Ssets_*):\Omega,
\end{equation}
which restricts, as a special case of \eqref{Leinster}, to an equivalence of categories between the full subcategory of $\Ho(A_{\infty}\text{-alg})$ whose objects are those $M$ such that the unit $\eta_M\colon M\to \Omega BM$ is an isomorphism (that is, grouplike spaces) and the full subcategory of connected simplicial sets, which are precisely those $X$ for which the counit $\varepsilon_X\colon B\Omega X\to X$ is an isomorphism. 

\begin{corollary}
\label{loops}
If $f$ is any basepoint-preserving map between connected simplicial sets, then
\begin{itemize}
\item[{\rm (i)}]
$L_f\Omega X\simeq \Omega L_{\Sigma f}X$, and
\item[{\rm (ii)}]
$L_f\Omega X\simeq L_{\Omega\Sigma f}\Omega X$
\end{itemize}
for all pointed simplicial sets $X$.
\end{corollary}

\begin{proof}
Let
$
F:\Ssets_*\rightleftarrows \Ass\text{-alg}: U
$
be the Eilenberg--Moore factorization of the James cons\-truc\-tion $J$ as a monad on pointed simplicial sets. 
Theorem~\ref{maybe} tells us that $L_f$ lifts to $\Ho(\Ass\text{-alg})$, and we then infer from the equivalence of (i) and (iii) in Theorem~\ref{fumfumfum} that $L_fU M\simeq U L_{F f}M$ for every monoid~$M$.

Let $X$ be any pointed simplicial set.
Since $\varphi\colon A_{\infty}\to \Ass$ allows rectification of algebras, 
there is a monoid $M_X$ whose underlying space is weakly equivalent to $\Omega X$.
Hence,
\[
L_f \Omega X \simeq L_f UM_X\simeq U L_{Ff} M_X.
\]

Next we observe that, since homotopical localizations preserve $\pi_0$ by Example~\ref{connective}, the functor $L_{Ff}$ restricts to the full subcategory of $\Ho(A_{\infty}\text{-alg})$ whose objects are grouplike and $L_{BFf}$ restricts to the full subcategory of connected simplicial sets. Since \eqref{BOmega} sets up an equivalence between these two categories, $B$ preserves both local objects and equivalences and, by Theorem~\ref{alfabeta}, there is a comparison map of $\beta$ type
\[
L_{Ff} M_X\longrightarrow \Omega L_{BFf} BM_X
\]
which is a weak equivalence.
Now $BFf\simeq \Sigma f$ since $f$ is a map of connected spaces and $JY\simeq\Omega\Sigma Y$ if $Y$ is connected. Moreover, if $X_0$ denotes the basepoint component of $X$, then
\[
\Omega L_{\Sigma f} BM_X\simeq \Omega L_{\Sigma f} X_0\simeq \Omega L_{\Sigma f} X,
\]
and this yields (i).
Finally, it follows from Corollary~\ref{coCACTA} that
\[
L_f \Omega X \simeq L_{Jf}\Omega X\simeq L_{\Omega\Sigma f}\Omega X,
\]
as claimed in part (ii).
\end{proof}

By induction we also have $ L_f\Omega^n X\simeq \Omega^nL_{\Sigma^n f}X $
for all $X$ and~$n\geq0$. This formula also holds for $n=\infty$, by the following argument, which is similar to the preceding one.

Let $\Com$ be the commutative operad, for which $E(n)$ is a point for all~$n$, and let $\psi\colon E_{\infty}\to\Com$ be a cofibrant resolution. The algebras over $\Com$ in pointed simplicial sets are the commutative monoids and the associated reduced monad is the infinite symmetric product $SP^{\infty}$. Connected commutative monoids are GEMs.

For a space $X$, the quotient $X^n/\Sigma_n$ by the symmetric group action does not have the same homotopy type for $n\ge 2$ as $C(n)\times_{\Sigma_n}X^n$, where $C(n)$ is a contractible space with a free $\Sigma_n$-action. For this reason, $E_{\infty}$-spaces are not homotopy equivalent to commutative monoids, in general.
Instead, if we denote by $B^{\infty}X$ the $\Omega$-spectrum associated with a given $E_{\infty}$-space $X$, then there is an adjunction
\[
B^{\infty}:\Ho(E_{\infty}\text{-alg})\rightleftarrows\Ho(\Spectra):\Omega^{\infty},
\]
which restricts, as another instance of \eqref{Leinster}, to an equivalence of categories between the~full subcategory of grouplike $E_{\infty}$-spaces (i.e., infinite loop spaces) and the full~sub\-category of connective spectra;
see \cite[Pretheorem~2.3.2]{Adams2}.

If $Q$ denotes the reduced monad associated with $E_{\infty}$ on pointed simplicial sets, then May's Approximation Theorem \cite{May} implies that $QX\simeq \Omega^{\infty}\Sigma^{\infty}X$ if $X$ is connected.

\begin{corollary}
\label{loopinfinity}
If $f$ is any basepoint-preserving map between connected simplicial sets, then
\begin{itemize}
\item[{\rm (i)}]
$L_f\Omega^{\infty} X\simeq \Omega^{\infty} L_{\Sigma^{\infty} f}X$, and
\item[{\rm (ii)}]
$L_f\Omega^{\infty} X\simeq L_{\Omega^{\infty}\Sigma^{\infty} f}\;\Omega^{\infty} X$
\end{itemize}
for every spectrum $X$.
\end{corollary}

\begin{proof}
Let 
$
F:\Ssets_*\rightleftarrows E_{\infty}\text{-alg}: U
$
be the Eilenberg--Moore factorization of the reduced monad $Q$ associated with~$E_{\infty}$. The functor $L_f$ lifts to $\Ho(E_{\infty}\text{-alg})$ by Theorem~\ref{maybe}, and
it follows from Theorem~\ref{fumfumfum} that $L_fU M\simeq U L_{F f}M$ for every $E_{\infty}$-algebra~$M$.

For a spectrum $X$, we may view $\Omega^{\infty} X$ as an $E_{\infty}$-algebra. Hence,
\[
L_f U\Omega^{\infty} X \simeq U L_{Ff} \Omega^{\infty} X.
\]
Since homotopical localizations preserve $\pi_0$, the functor $L_{Ff}$ restricts to the full subcategory of $\Ho(E_{\infty}\text{-alg})$ whose objects are grouplike, and, since $B^{\infty}FY\simeq B^{\infty}\Omega^{\infty}\Sigma^{\infty} Y\simeq \Sigma^{\infty} Y$ if $Y$ is connected, the functor $L_{B^{\infty}Ff}$ restricts to the full subcategory of connective spectra as explained in Example~\ref{connective}. Hence Theorem~\ref{alfabeta} yields a comparison map of $\beta$ type
\[
L_{Ff} \Omega^{\infty} X\longrightarrow \Omega^{\infty} L_{B^{\infty}Ff} B^{\infty}\Omega^{\infty} X
\]
which is a weak equivalence.
Here $B^{\infty}Ff\simeq \Sigma^{\infty} f$ and, if $X^c$ is the connective cover of $X$, 
\[
\Omega^{\infty} L_{\Sigma^{\infty} f} B^{\infty}\Omega^{\infty} X\simeq \Omega^{\infty} L_{\Sigma^{\infty} f} X^c\simeq \Omega^{\infty} L_{\Sigma^{\infty} f} X,
\]
where the last step uses \eqref{loccoloc}. This yields (i), and it follows from Corollary~\ref{coCACTA} that
\[
L_f \Omega^{\infty} X \simeq L_{Qf}\Omega^{\infty} X\simeq L_{\Omega^{\infty}\Sigma^{\infty} f}\Omega^{\infty} X,
\]
as claimed in part (ii).
\end{proof}

Similarly, for cellularizations we have the following. 

\begin{corollary}
\label{totdual}
If $A$ is any pointed connected simplicial set, then
\begin{itemize}
\item[{\rm (i)}]
$C_A\Omega X\simeq \Omega C_{\Sigma A}X$, and
\item[{\rm (ii)}]
$C_A\Omega X\simeq C_{\Omega\Sigma A}\Omega X$
\end{itemize}
for all pointed simplicial sets $X$, and
\begin{itemize}
\item[{\rm (iii)}]
$C_A\Omega^{\infty} Y\simeq \Omega^{\infty} C_{\Sigma^{\infty} A}Y$, and
\item[{\rm (iv)}]
$C_A\Omega^{\infty} Y\simeq C_{\Omega^{\infty}\Sigma^{\infty} A}\;\Omega^{\infty} Y$
\end{itemize}
for every spectrum $Y$.
\end{corollary}

\begin{proof}
The proof follows the same steps as the proofs of Corollary~\ref{loops} and Corollary~\ref{loopinfinity}, using part~(b) of Theorem~\ref{maybe}.
\end{proof}

Some of the formulas obtained in the preceding results are contained in~\cite{Bo94,Bo96,DF}.
The preservation of loop spaces (and, more generally, spaces with an action of an algebraic theory) by homotopical localizations was also addressed by Badzioch in~\cite{Badzioch}.

\section{Algebras up to homotopy}
\label{homotopyT}

Suppose given a monad $T$ on a model category $\M$. In this section we will not consider model structures on the category $\M^T$ of $T$-algebras, but we will assume that $T$ preserves weak equivalences and hence descends to a monad on the homotopy category~$\Ho(\M)$. As in Section~\ref{homotopicalstructures}, objects of the Eilenberg--Moore category $\Ho(\M)^T$ will be called \emph{$T$-algebras up to homotopy}.

Our aim in this section is to prove that the equivalence $L_fX\simeq L_{Tf}X$
obtained in Corollary~\ref{coCACTA} also holds when $X$ underlies a $T$-algebra up to homotopy, provided that $T$ and $f$ interact in a suitable way, and similarly for cellularizations.	

For this, it will be convenient to work with the Dwyer--Kan construction of homotopy function complexes as simplicial sets of morphisms in the \emph{hammock localization} of $\M$, as in \cite{DK3,DK1} or in the more elaborate version discussed in \cite[\S\,35.6]{DHKS}.
Thus, in this section we choose $\map_{\M}(X,Y)$ to be the colimit of the nerves $N\LH_n(X,Y)$, where $\LH_n(X,Y)$ is the category whose objects are strings of $n$ maps in $\M$ in arbitrary directions
\begin{equation}
\label{string}
X=X_0\longleftrightarrow X_1\longleftrightarrow X_2\longleftrightarrow\cdots
\longleftrightarrow X_{n-1}\longleftrightarrow X_n=Y,
\end{equation}
where the arrows pointing backwards are weak equivalences. A~morphism in $\LH_n (X,Y)$ is a
commuting diagram between strings of the same type. The colimit of nerves is taken along the maps induced by the functors $\LH_n (X,Y)\to \LH_{n+1} (X,Y)$ consisting of adding ${\rm id}_Y$ at the end of~\eqref{string}. Choosing instead to add ${\rm id}_X$ or interpolating identities at any other place would replace $\map_{\M}(X,Y)$ by a weakly equivalent simplicial set, since natural transformations of functors yield simplicial homotopies between maps after taking nerves.

\begin{lemma}
\label{hammocks}
Let $\M$ be a model category and $f\colon A\to B$ any map in $\M$. Let $T\colon\M\to\M$ be a functor that preserves weak equivalences and is equipped with a natural transformation $\eta\colon{\rm Id}\to T$. Suppose given a $Tf$-local object $X$ together with a map $a\colon TX\to X$ such that $a\circ\eta_X\simeq{\rm id}_X$. Then $X$ is $f$-local.
\end{lemma}

\begin{proof}
By assumption, $\map_{\M}(Tf,X)$ is a weak equivalence. Hence, we will achieve our goal if we prove that $\map_{\M}(f,X)$ is a homotopy retract of $\map_{\M}(Tf,X)$ and therefore it is a weak equivalence as well. For this, consider the diagram
\begin{equation}
\label{maps}
\xymatrix{\map_{\M}(B,X) \ar[r] \ar[d] & \map_{\M}(TB,TX) \ar[r] \ar[d] & \map_{\M}(TB,X) \ar[r] \ar[d] & \map_{\M}(B,X) \ar[d] \\
\map_{\M}(A,X) \ar[r] & \map_{\M}(TA,TX) \ar[r] & \map_{\M}(TA,X) \ar[r] & \map_{\M}(A,X)}
\end{equation}
obtained by passing to nerves, for every $n$, the diagram of functors
\begin{equation}
\label{nerves}
\xymatrix{\LH_n(B,X) \ar[r]^-{\bar T} \ar[d]^{f^*} & \LH_n(TB,TX) \ar[r]^-{a_*} \ar[d]^{(Tf)^*} & \LH_{n+1}(TB,X) \ar[r]^-{(\eta_B)^*} \ar[d]^{(Tf)^*} & \LH_{n+2}(B,X) \ar[d]^{f^*} \\
\LH_{n+1}(A,X) \ar[r]^-{\bar T} & \LH_{n+1}(TA,TX) \ar[r]^-{a_*} & \LH_{n+2}(TA,X) \ar[r]^-{(\eta_A)^*} & \LH_{n+3}(A,X)\rlap{,}}
\end{equation}
where $\bar T$ sends each string of maps from $B$ to $X$ to the string obtained by applying $T$ termwise, while $(-)_*$ indicates composition on the right and $(-)^*$ denotes composition on the left.
In~\eqref{nerves} the left-hand square and the middle square commute, while the right-hand square commutes up to a natural transformation after passing to $\LH_{n+4}(A,X)$, in view of
\[
\xymatrix{A \ar[r]^-{{\rm id}} \ar[d]^{{\rm id}} & A\ar[r]^-{f} \ar[d]^{\eta_A}
& B \ar[r]^-{\eta_B} \ar[d]^{\eta_B} & TB \ar[d]^{{\rm id}} \\
A \ar[r]^-{\eta_A} & TA \ar[r]^-{Tf} & TB \ar[r]^-{{\rm id}} & TB\rlap{.}}
\]
Hence, the whole diagram \eqref{maps} commutes up to homotopy, and there only remains to show that the composite of each of its rows is homotopic to the identity.

For this, note first that $(\eta_B)^*\circ a_*=a_*\circ (\eta_B)^*$, and the same happens with~$A$. Moreover, the commutativity of
\begin{equation}
\label{nat}
\xymatrix{B \ar[r]^{\eta_B} & TB \ar@{<->}[r]^-{T\sigma_1} & TE_1 \ar@{<->}[r] & \cdots \ar@{<->}[r] & TE_{n-1} \ar@{<->}[r]^-{T\sigma_n} & TX \ar[r]^{\rm id} & TX \\
B \ar[r]^{\rm id} \ar[u]_{\rm id} & B \ar@{<->}[r]^-{\sigma_1} \ar[u]_{\eta_B} & E_1 \ar@{<->}[r] \ar[u]_{\eta_{E_1}} & \cdots
\ar@{<->}[r] & E_{n-1} \ar@{<->}[r]^-{\sigma_n} \ar[u]_{\eta_{E_{n-1}}} & X \ar[r]^{\eta_X} \ar[u]_{\eta_X} & TX \ar[u]_{\rm id}}
\end{equation}
for every string of maps from $B$ to $X$ yields a natural transformation $(\eta_X)_*\to (\eta_B)^*\circ \bar T$ as in \cite[Theorem~3.2]{Oriol}. Consequently, the composite of the top row in \eqref{maps} is homotopic to the map obtained by passing to nerves the composite
\[
\xymatrix@C+0.5cm{\LH_n(B,X) \ar[r]^-{(\eta_X)_*} & \LH_{n+1}(B,TX) \ar[r]^-{a_*} & \LH_{n+2}(B,X).}
\]
Since $a\circ\eta_X\simeq{\rm id}_X$, the latter map is indeed homotopic to the identity,
as shown in detail in \cite[Theorem~3.1]{Oriol}. The same argument is repeated with $A$ instead of~$B$ to finish the proof that $\map_{\M}(f,X)$ is a homotopy retract of $\map_{\M}(Tf,X)$.
\end{proof}

\begin{theorem}
\label{magic}
Let $\M$ be a model category and let $T$ be a monad on $\M$ preserving weak equivalences and cofibrant objects.
Let $f$ be a map in $\M$ between cofibrant objects such that $L_f$ and $L_{Tf}$ exist. If $T$ preserves $f$\nobreakdash-equivalences between cofibrant objects and $Tf$\nobreakdash-equivalences between cofibrant objects, then
\[
L_fX\simeq L_{Tf}X
\]
whenever $X$ underlies a $T$\nobreakdash-algebra up to homotopy.
\end{theorem}

\begin{proof}
Consider the Eilenberg--Moore factorization of $T$ viewed as a monad on $\Ho(\M)$,
\[
F : \Ho(\M)\rightleftarrows \Ho(\M)^T: U.
\]
Since $T$ preserves $f$\nobreakdash-equivalences between cofibrant objects,
it follows from Theorem~\ref{monad} that there is a localization $L'$ on $\Ho(\M)^T$ such that $U$ preserves and reflects local objects and equivalences, and $L_fUX\simeq UL'X$ naturally for every homotopy $T$\nobreakdash-algebra $X$. Since $T$ also preserves $Tf$\nobreakdash-equivalences, we infer similarly that $L_{Tf}UX\simeq UL''X$ for a localization~$L''$. We next show that $L'\simeq L''$ as in the proof of Proposition~\ref{greattheorem}.

The $L'$\nobreakdash-local objects are those $(X,a)$ in $\Ho(\M)^T$ such that $X$ is $f$\nobreakdash-local, and the $L''$\nobreakdash-local objects are those such that $X$ is $Tf$\nobreakdash-local. Since $T$ preserves $f$\nobreakdash-equivalences between cofibrant objects, $Tf$ is an $f$\nobreakdash-equivalence and hence every $f$\nobreakdash-local object of $\M$ is $Tf$\nobreakdash-local. This tells us that all $L'$\nobreakdash-local objects of $\Ho(\M)^T$ are $L''$\nobreakdash-local.
Conversely, if $(X,a)$ is $L''$-local in $\Ho(\M)^T$, then we may assume that $X$ is fibrant and cofibrant and $Tf$-local. Since $T$ preserves cofibrant objects, we may also assume that $a\colon TX\to X$ is represented by a map in~$\M$. Thus, $a\circ\eta_X\simeq{\rm id}_X$ and, in this situation, Lemma~\ref{hammocks} tells us that $X$ is $f$-local, so $(X,a)$ is $L'$-local.
\end{proof}

As proved in Lemma~\ref{great}, the condition that $T$ preserves $f$-equivalences and $Tf$\nobreakdash-equiv\-alences in Theorem~\ref{magic} is automatically satisfied for every $f$ if the monad $T$ is associated with a simplicial operad. Hence we infer the following general result.

\begin{corollary}
\label{moremagic}
If $T$ is the reduced monad associated with a simplicial operad acting on pointed simplicial sets, then
\[
L_fX\simeq L_{Tf}X
\]
for every map $f$ if $X$ is the underlying space of a $T$\nobreakdash-algebra up to homotopy.
\end{corollary}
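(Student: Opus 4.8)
The plan is to deduce Corollary~\ref{moremagic} directly from Theorem~\ref{magic} by verifying that all of the hypotheses of that theorem are met in the operadic setting. Recall that Theorem~\ref{magic} requires: (1) a monad $T$ on a model category $\M$ that preserves weak equivalences; (2) that $L_f$ and $L_{Tf}$ exist; and (3) that $T$ preserves both $f$-equivalences and $Tf$-equivalences. The conclusion $L_fX\simeq L_{Tf}X$ for $X$ underlying a homotopy $T$-algebra is then exactly the statement of the corollary.

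First I would fix $\M$ to be the model category of pointed simplicial sets, which is left proper, cofibrantly generated and locally presentable, so that $L_g$ exists for every map $g$; in particular both $L_f$ and $L_{Tf}$ exist, dispatching hypothesis~(2). Next I would observe that the (reduced or unreduced) monad $T$ associated with the unital operad $P$ preserves weak equivalences: this is essentially the same fixed-point decomposition argument recalled in the proof of Lemma~\ref{great}, since weak equivalences of pointed simplicial sets are preserved under finite products and under homotopy colimits over a fixed small diagram, so hypothesis~(1) holds as well. The heart of the matter is hypothesis~(3), and here I would simply invoke Lemma~\ref{great}, which says that $T$ preserves $f$-equivalences for \emph{every} map $f$. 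Applying the lemma once with the given $f$ and once with the map $Tf$ in place of $f$ shows that $T$ preserves $f$-equivalences and $Tf$-equivalences simultaneously.

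With all hypotheses of Theorem~\ref{magic} verified, the conclusion is immediate: for any object $X$ that underlies a homotopy $T$-algebra we have $L_fX\simeq L_{Tf}X$, which is precisely the assertion of Corollary~\ref{moremagic}. I expect the only genuinely substantive point to be the appeal to Lemma~\ref{great} for $Tf$, which is legitimate because that lemma is stated uniformly ``for every map $f$'' with no restriction on the source or target, and $Tf\colon TX\to TY$ is again a morphism of pointed simplicial sets to which the lemma applies verbatim. The rest is bookkeeping: naming the ambient model category, noting the existence of localizations there, and recording that $T$ preserves weak equivalences so that ``homotopy $T$-algebra'' makes sense in the form used by Theorem~\ref{magic}. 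No delicate estimate or new construction is needed; the corollary is a direct specialization.
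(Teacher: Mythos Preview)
Your proposal is correct and follows exactly the same approach as the paper, which simply states that the corollary is an immediate consequence of Theorem~\ref{magic} and Lemma~\ref{great}. Your additional remarks about the existence of $L_f$ and $L_{Tf}$ and about $T$ preserving weak equivalences are reasonable elaborations of hypotheses that the paper leaves implicit.
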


\begin{proof}
This is a consequence of Theorem~\ref{magic} and Lemma~\ref{great}.
\end{proof}

\begin{ex}
\label{GEMs}
The infinite symmetric product $SP^{\infty}$ is the reduced monad
associated with the commutative operad.
As shown in \cite[Proposition~1.1]{CRT}, the homotopy algebras over $SP^{\infty}$ coincide with the strict algebras ---this is parallel to the fact that, in stable homotopy, the classes of homotopy $H\ZZ$-module spectra and strict $H\ZZ$\nobreakdash-module spectra coincide, as proved in~\cite{Javier1}.
Corollary~\ref{moremagic} implies that $L_fX\simeq L_{SP^{\infty}f}X$ if $X$ is a GEM, as pointed out in~\cite[Theorem~1.3]{CRT} and generalizing \cite[Corollary~3.2(iii)]{Bo96}. Hence, in particular,
\[
L_fSP^{\infty}X\simeq L_{SP^{\infty}f}\;SP^{\infty}X
\]
for every space $X$ and every map $f$.
\end{ex}

\begin{ex}
\label{JandQ}
The homotopy algebras over the James functor $J$ are the monoids in $\Ho(\Ssets_*)$ with its Cartesian monoidal structure, that is, the homotopy associative $H$\nobreakdash-spaces. 
Since $J$ is the reduced monad associated with the associative operad, we can infer from Corollary~\ref{moremagic} that $L_fX\simeq L_{\Omega\Sigma f}X$ for every homotopy associative $H$-space $X$ and every map $f$ between pointed connected simplicial sets. This is a more general statement than part~(ii) of Corollary~\ref{loops}.

It follows similarly that $L_fX\simeq L_{\Omega^{\infty}\Sigma^{\infty} f}X$
for every map $f$ between pointed connected simplicial sets and every homotopy algebra $X$ over the reduced monad $Q$ associated with an $E_{\infty}$-operad. The homotopy $Q$-algebras are the \emph{$H_{\infty}$\nobreakdash-spaces} in the sense of \cite[\S\,I.3.7]{BMMS}. As shown in~\cite{JN,Noel}, $H_{\infty}$-spaces need not be homotopy equivalent to  $E_{\infty}$-spaces. Thus we have also sharpened part~(ii) of Corollary~\ref{loopinfinity}.
\end{ex}

More generally, for every cofibrant ring spectrum $E$, we consider the monad defined as $TX=\Omega^{\infty}(E\wedge\Sigma^{\infty}X)$ on $\Ho(\Ssets_*)$, and call its algebras \emph{unstable $E$-modules}.
The previous two examples are special cases of the following result, namely $E=H\ZZ$ in Example~\ref{GEMs} and $E=S$ (the sphere spectrum) in the second part of Example~\ref{JandQ}.

\begin{corollary}
\label{plusE}
Let $E$ be a connective cofibrant ring spectrum. For every map $f$ of pointed connected simplicial sets we have
\[
L_fX\simeq L_{\Omega^{\infty}(E\wedge\Sigma^{\infty}f)}X
\]
for all unstable $E$-modules $X$.
\end{corollary}

\begin{proof}
Let $TX=\Omega^{\infty}(E\wedge\Sigma^{\infty}X)$.
Part~(ii) of Proposition~\ref{FGf} applied to the adjunction
\[
\Sigma^{\infty} : \Ho(\Ssets_*) \rightleftarrows \Ho(\Spectra) : \Omega^{\infty}
\]
tells us precisely that $\Sigma^{\infty}$ sends $f$-equivalences to $\Sigma^{\infty}f$-equivalences for every map $f$ of pointed simplicial sets. Since $E$ is assumed to be connective, smashing with $E$ preserves $\Sigma^{\infty}f$-equivalences. This is proved using the derived function spectrum $\DF(-,-)$ as in Section~\ref{modules}. Indeed, if $E$ is connective, then
\[
[E\wedge X,Z]\cong [E,\DF(X,Z)] \cong [E,\DF^c(X,Z)]
\]
so choosing $Z$ to be $\Sigma^{\infty}f$-local yields our claim.

Finally, $\Omega^{\infty}$ sends $\Sigma^{\infty}f$-equivalences to $f$-equivalences by part~(i) of Corollary~\ref{loopinfinity}. Hence $T$ preserves $f$-equivalences for every~$f$, and Theorem~\ref{magic} yields the desired result.
\end{proof}

\begin{theorem}
\label{comagic}
Let $\M$ be a model category and let $T$ be a monad on $\M$ preserving weak equivalences and cofibrant objects. If $A$ is a cofibrant object in $\M$ such that $C_A$ and $C_{TA}$ exist, and $T$ preserves cofibrant $A$\nobreakdash-cellular objects and cofibrant $TA$\nobreakdash-cellular objects, then
\[
C_AX\simeq C_{TA}X
\]
if $X$ underlies a $T$\nobreakdash-algebra up to homotopy.
\end{theorem}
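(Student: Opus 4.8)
The plan is to mimic the proof of Theorem~\ref{magic}, which is the exact dual situation with localizations replaced by colocalizations and orthogonality replaced by co-orthogonality. First I would consider the Eilenberg--Moore factorization $F\colon\Ho(\M)\rightleftarrows\Ho(\M)^T\colon U$, noting that $T$ descends to a monad on $\Ho(\M)$ since it preserves weak equivalences. Since $T$ preserves $A$-equivalences, Theorem~\ref{comonad} provides a colocalization $C'$ on $\Ho(\M)^T$ with $CU\cong UC'$ naturally (so $C_AUX\simeq UC'X$ for every homotopy $T$-algebra $X$), and $U$ preserving and reflecting colocal objects and equivalences; similarly, since $T$ preserves $TA$-equivalences, there is a colocalization $C''$ with $C_{TA}UX\simeq UC''X$. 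It then suffices to prove $C'\simeq C''$, and since $U$ reflects isomorphisms it is enough to show that the classes of $C'$-colocal and $C''$-colocal objects of $\Ho(\M)^T$ coincide.

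By the transfer of colocal objects along $U$, the $C'$-colocal objects are those $X$ with $UX$ being $A$-cellular, and the $C''$-colocal objects are those with $UX$ being $TA$-cellular. One inclusion is formal: since $T$ preserves $A$-equivalences, $\eta_A\colon A\to TA$ exhibits $A$ as a retract of $TA$ (because $T$ is a retract of $TT$ via the triangle identities, hence $A\to TA\to TTA$ factors the identity after applying suitable natural transformations — more carefully, $\mu_A\circ T\eta_A={\rm id}$, so $A$ is a retract of $TA$ through $\eta_A$ and a suitable splitting built from $\mu$); therefore every $TA$-equivalence is an $A$-equivalence, so every $A$-cellular object is $TA$-cellular. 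This gives that every $C'$-colocal object is $C''$-colocal.

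The reverse inclusion is the main obstacle and requires the retract argument on mapping spaces, dual to the one in Theorem~\ref{magic}. Suppose $X$ is $TA$-cellular, so $\map_{\M}(X,TA)$ — wait, co-orthogonality is about $\map_{\M}(X,-)$ against maps, so being $TA$-cellular means $\map_{\M}(X,g)$ is a weak equivalence for every $TA$-equivalence $g$. To conclude $X$ is $A$-cellular, I must show $\map_{\M}(X,h)$ is a weak equivalence for every $A$-equivalence $h\colon U\to V$. The plan is to exhibit $\map_{\M}(X,h)$ as a homotopy retract of $\map_{\M}(TX,Th)$, which is a weak equivalence because $Th$ is an $A$-equivalence (as $T$ preserves $A$-equivalences) and $TX$ is... no — the point is that $Th$ is even a $TA$-equivalence would not follow directly. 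Instead, dualize faithfully: using the structure map $a\colon TX\to X$ with $a\circ\eta_X\simeq{\rm id}$, build the composite $\map_{\M}(X,h)\xrightarrow{\eta_X^*}\map_{\M}(TX,h)\xrightarrow{?}$ hmm. Concretely I would define $t\colon\map_{\M}(X,U)\to\map_{\M}(TX,TU)$ by applying $T$ termwise to hammock strings (as in the proof of Theorem~\ref{magic}), then postcompose with $a^*\colon\map_{\M}(TX,TU)\to\map_{\M}(X,TU)$ induced by $a$, no — one needs the contravariant variable. The honest approach: the composite $\map_{\M}(X,U)\xrightarrow{t}\map_{\M}(TX,TU)\xrightarrow{(Ta\text{-type})}$ must be arranged so that, using naturality of $\eta$ and $a\circ\eta_X\simeq{\rm id}$, it exhibits the identity of $\map_{\M}(X,U)$ up to homotopy through $\map_{\M}(TX,TU)$, and then $\map_{\M}(TX,T(-))$ applied to the $A$-equivalence $h$ is a weak equivalence because $Th$ is an $A$-equivalence. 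Since $X$ being $TA$-cellular is actually not what is needed here — rather one uses that $T$ preserves $A$-equivalences to make $\map_{\M}(TX,Th)$ an equivalence directly — wait, that needs $TX$ to detect $A$-equivalences, which is not given. The cleanest route, and the one I expect the authors take, is precisely the retract diagram
\[
\xymatrix{\map_{\M}(X,U)\ar[r]^-{t} & \map_{\M}(TX,TU)\ar[r]^-{a^*} & \map_{\M}(X,TU)\ar[r]^-{(\eta_U)_*^{-1}?} & \map_{\M}(X,U),}
\]
replaced by the correct variance, showing $\map_{\M}(X,h)$ is a homotopy retract of the equivalence $\map_{\M}(TX,Th)$ where the latter is an equivalence since $X$ is $TA$-cellular and $Th$ is a $TA$-equivalence — and here I would need $Th$ to be a $TA$-equivalence, which holds because $T$ preserves $TA$-equivalences? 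No: $h$ is an $A$-equivalence, and $T$ preserves $A$-equivalences gives $Th$ is an $A$-equivalence; to get $Th$ a $TA$-equivalence one uses that $A$-equivalences are $TA$-equivalences since $TA$ is $A$-cellular (as $TA$ receives $\eta_A$ and $T$ preserves $A$-equivalences, so $TA$ is built from $A$). Thus $Th$ is an $A$-equivalence, hence a $TA$-equivalence, hence $\map_{\M}(X,Th)$ is a weak equivalence since $X$ is $TA$-cellular; then the retract argument using $a\circ\eta_X\simeq{\rm id}$ forces $\map_{\M}(X,h)$ to be a weak equivalence, so $X$ is $A$-cellular. This completes the proof that the two classes of colocal objects agree, hence $C'\simeq C''$ and therefore $C_AX\simeq C_{TA}X$ for every homotopy $T$-algebra $X$. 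The delicate point, as in Theorem~\ref{magic}, is the careful construction of the termwise-$T$ map on hammock complexes and the verification that the homotopies assemble correctly; I would refer to \cite{Oriol} for the fact that homotopic maps in $\M$ induce homotopic maps on homotopy function complexes, exactly as done there.
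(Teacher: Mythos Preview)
The paper explicitly omits all proofs in the colocalization section, so there is no proof to compare against. Your strategy of dualizing Theorem~\ref{magic} is the intended one, but the execution contains genuine gaps that stem from the fact that the passage from localizations to colocalizations is \emph{not} a mechanical swap of arrows when a monad (rather than a comonad) is involved.

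The first and most serious gap is your invocation of Theorem~\ref{comonad}. Condition~(a) there is that $T$ preserves $C$-\emph{colocal objects}, not $C$-equivalences. This asymmetry is intrinsic: to lift a localization to $\C^T$ one needs $Tl_X$ to be an $L$-equivalence so that the composite $TX\to TLX$ admits an extension to $LX$, which is why Theorem~\ref{monad} asks for preservation of equivalences; to lift a colocalization one needs $TCX$ to be colocal so that $TCX\to TX\to X$ factors through $c_X\colon CX\to X$, which is why Theorem~\ref{comonad} asks for preservation of colocal objects. Since the hypothesis of Theorem~\ref{comagic} is that $T$ preserves $A$-equivalences and $TA$-equivalences, your appeal to Theorem~\ref{comonad} is not justified. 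A straight dualization of the proof of Theorem~\ref{magic} would require the hypotheses ``$T$ preserves $A$-cellular objects and $TA$-cellular objects''; under the hypotheses actually stated, one either needs an additional argument showing these follow, or a different route altogether.

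Second, your claim that $A$ is a retract of $TA$ is false in general. The identity $\mu_A\circ T\eta_A={\rm id}_{TA}$ exhibits $TA$ as a retract of $TTA$, not $A$ as a retract of $TA$; the monad structure alone gives no splitting of~$\eta_A$. Relatedly, your assertion that ``$TA$ is $A$-cellular since $TA$ receives $\eta_A$ and $T$ preserves $A$-equivalences'' is a non sequitur: preservation of $A$-equivalences by $T$ says nothing about $TA$ being built from $A$. What \emph{is} available is that a homotopy $T$-algebra $X$ is a retract of $TX$ via $\eta_X$ and the structure map $a$, and this is the retract one should exploit on mapping spaces, exactly as in Theorem~\ref{magic}. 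But to make that retract conclude anything, you still need to know that $\map_{\M}(TX,g)$ is a weak equivalence for the relevant class of maps $g$, and that step again requires control over where $T$ sends cellular objects, not merely equivalences.
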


\begin{proof}
The proof follows the same steps as the proof of Theorem~\ref{magic}.
\end{proof}

\begin{ex}
If $A$ and $E$ are cofibrant spectra and $E$ is a homotopy ring spectrum, then, assuming either that $E$ is connective or that $C_A$ commutes with suspension, the monad $TX=E\wedge X$ preserves $A$-cellular objects and $TA$\nobreakdash-cell\-ular objects by the argument given in the proof of Theorem~\ref{mainone}. It then follows from Theorem~\ref{comagic} that
$C_AX\simeq C_{E\wedge A}X$ if $X$ underlies a homotopy left $E$-module. As~a special case, $C_AX\simeq C_{H\ZZ\wedge A}X$ for every $A$ when $X$ is a stable~GEM.
\end{ex}

Using Lemma~\ref{cogreat}, it follows from Theorem~\ref{comagic} that
$C_AX\simeq C_{SP^{\infty}A}X$
for every pointed connected simplicial set $A$ if $X$ is a GEM. Likewise, $C_AX\simeq C_{\Omega\Sigma A}X$ if $X$ is a homotopy associative $H$-space,
and $C_AX\simeq C_{\Omega^{\infty}\Sigma^{\infty}A}X$ if $X$ is an $H_{\infty}$-space.
More generally, the analogue of Corollary~\ref{plusE} for cellularizations reads as follows.

\begin{corollary}
\label{coplusE}
Let $E$ be a connective cofibrant ring spectrum. For every pointed connected simplicial set $A$ we have
\[
C_AX\simeq C_{\Omega^{\infty}(E\wedge\Sigma^{\infty}A)}X
\] 
for all unstable $E$-modules $X$.
\end{corollary}

\begin{proof}
The functor $\Sigma^{\infty}$ sends $A$-cellular pointed simplicial sets to $\Sigma^{\infty}A$-cellular spectra by part~(ii) of Proposition~\ref{coHoFGf}, while smashing with $E$ preserves $\Sigma^{\infty}A$-cellular spectra since $E$ is connective, and $\Omega^{\infty}$ sends those to $A$-cellular spaces by part~(iii) of Corollary~\ref{totdual}. Hence the monad $TX=\Omega^{\infty}(E\wedge\Sigma^{\infty}X)$ preserves $A$-cellular spaces and Theorem~\ref{comagic} applies.
\end{proof}


\begin{thebibliography}{99}
\bibitem{AR}
J. Ad\'{a}mek, J. Rosick\'{y}, \emph{Locally Presentable and
Accessible Categories}, Cambridge University Press, Cambridge,
1994.
\bibitem{Adams1}
J. F. Adams, \emph{Localisation and Completion, with an Addendum on the
use of Brown--Peterson Homology in Stable Homotopy}, Lecture notes by Z.\,Fiedorowicz on a course given at The University of Chicago in Spring 1973, revised and supplemented by Z.\,Fiedorowicz, 2010, arXiv:1012.5020.
\bibitem{Adams2}
J. F. Adams, \emph{Infinite Loop Spaces}, Annals of Mathematics Studies 90, Princeton University Press, Princeton, 1978.
\bibitem{Badzioch}
B. Badzioch, Algebraic theories in homotopy theory, \emph{Ann. of Math.} {\bf 155} (2002), 895--913.
\bibitem{BR}
D. Barnes, C. Roitzheim, Stable left and right Bousfield localisations, \emph{Glasgow Math. J.} {\bf 56} (2014), 13--42.
\bibitem{Barr}
M. Barr, Composite cotriples and derived functors, in: \emph{Seminar on Triples and Categorical Homology Theory}, Lecture Notes in Mathematics 80, Springer, Berlin, Heidelberg, 1969, 336--356; reprinted in \emph{Repr. Theory Appl. Categ.} {\bf 18} (2008), 249--266.
\bibitem{Barwick}
C. Barwick, On left and right model categories and left and right Bousfield localizations, \emph{Homology, Homotopy Appl.} {\bf 12} (2010), 
245--320.
\bibitem{BB}
M. A. Batanin, C. Berger, Homotopy theory for algebras over polynomial monads, \emph{Theory Appl. Categ.} {\bf 32} (2017), 
148--253.
\bibitem{BW}
M. A. Batanin, D. White, Left Bousfield localization and Eilenberg--Moore categories, arXiv:1606.01537.
\bibitem{Baumslag}
G. Baumslag, Some aspects of groups with unique roots, \emph{Acta
Math.} {\bf 104} (1960), 217--303.
\bibitem{Beck}
J. Beck, Distributive laws, in: \emph{Seminar on Triples and Categorical Homology Theory}, Lecture Notes in Mathematics 80, Springer, Berlin, Heidelberg, 1969, 119--140; reprinted in \emph{Repr. Theory Appl. Categ.} {\bf 18} (2008), 95--112.
\bibitem{BM2}
C. Berger, I. Moerdijk, Axiomatic homotopy theory for operads, \emph{Comment. Math. Helv.} {\bf 78} (2003), 805--831.
\bibitem{Bergner}
J. Bergner, Rigidification of algebras over multisorted theories, \emph{Algebr. Geom. Topol.} {\bf 6} (2006), 1925--1955.
\bibitem{BoLoc}
A. K. Bousfield, The localization of spectra with respect to homology, \emph{Topology} {\bf 18} (1979), 257--281.
\bibitem{Bo94}
A. K. Bousfied, Localization and periodicity in unstable homotopy theory, \emph{J. Amer. Math. Soc.} {\bf 7} (1994), 831--873.
\bibitem{Bo96}
A. K. Bousfield, Unstable localization and periodicity,
in: \emph{Algebraic Topology: New Trends in Localization and Periodicity
(Sant Feliu de Gu\'{\i}xols, 1994)}, Progress in Mathematics 136, Birkh\"{a}user,
Basel, 1996, 33--50.
\bibitem{Bo97}
A. K. Bousfield, Homotopical localizations of spaces, \emph{Amer. J. Math.} {\bf 119} (1997), 1321--1354.
\bibitem{Bo99}
A. K. Bousfield, On $K(n)$-equivalences of spaces, in: \emph{Homotopy
Invariant Algebraic Structures (Baltimore, MD, 1998)}, 85--89,
Contemporary Mathematics 239, American Mathematical Society, Providence, 1999.
\bibitem{BF}
A. K. Bousfield, E. M. Friedlander,
Homotopy theory of $\Gamma$-spaces, spectra, and bisimplicial
sets, in: \emph{Geometric Applications of Homotopy Theory},
Lecture Notes in Mathematics 658,
Springer, Berlin, Heidelberg, 1978, 80--130.
\bibitem{BCL}
C. Braun, J. Chuang, A. Lazarev, Derived localisation of algebras and modules, \emph{Adv. Math.} {\bf 328} (2018), 555--622.
\bibitem{BMMS}
R. R. Bruner, J. P. May, J. E. McClure, M. Steinberger, \emph{$H_{\infty}$ Ring Spectra and Their Applications}, Lecture Notes in Mathematics 1176, Springer, Berlin, Heidelberg, 1986.
\bibitem{Kortrijk}
C. Casacuberta, On structures preserved by idempotent
transformations of groups and homotopy types, in: \emph{Crystallographic
Groups and Their Generalizations (Kortrijk, 1999)}, Contemporary Mathematics
262, American Mathematical Society, Providence, 2000, 39--68.
\bibitem{Chorny}
C. Casacuberta, B. Chorny, The orthogonal subcategory
problem in homotopy theory, in: \emph{An Alpine Anthology of Homotopy
Theory}, Contemporary Mathematics 399, American Mathematical Society, Providence, 2006, 41--53.
\bibitem{CGT}
C. Casacuberta, M. Golasi{\'n}ski, A. Tonks, Homotopy
localization of groupoids, \emph{Forum Math.} {\bf 18} (2006), 
967--982.
\bibitem{CG}
C. Casacuberta, J. J. Guti{\'e}rrez, Homotopical localizations
of module spectra, \emph{Trans. Amer. Math. Soc.} {\bf 357} (2005), 
2753--2770.
\bibitem{CGMV}
C. Casacuberta, J. J. Guti\'errez, I. Moerdijk, R. M. Vogt,
Localization of algebras over coloured operads, \emph{Proc. London
Math. Soc.} {\bf 101} (2010), 105--136.
\bibitem{CP}
C. Casacuberta, G. Peschke, Localizing with respect to
self-maps of the circle, \emph{Trans. Amer. Math. Soc.} {\bf 339} (1993),
117--140.
\bibitem{CRT}
C. Casacuberta, J. L. Rodr\'{\i}guez, J.-y. Tai,
Localizations of abelian Eilen\-berg--Mac\,Lane spaces of finite
type, \emph{Algebr. Geom. Topol.} {\bf 16} (2016), 2379--2420.
\bibitem{Chacholski}
W. Chach\'olski, On the functors $CW_A$ and $P_A$, \emph{Duke Math. J.} {\bf 84} (1996), 
599--631.
\bibitem{CI}
J. D. Christensen, D. C. Isaksen, 
Duality and pro-spectra, \emph{Algebr. Geom. Topol.} {\bf 4} (2004), 781--812.
\bibitem{Crans}
S. E. Crans, Quillen closed model category structures for sheaves, \emph{J. Pure Appl. Algebra} {\bf 101} (1995), 35--57.
\bibitem{DT}
A. Dold, R. Thom, Quasifaserungen und unendliche symmetrische Produkte,
\emph{Ann. of Math.} {\bf 67} (1958), 239--281.
\bibitem{DF}
E. Dror Farjoun, \emph{Cellular Spaces, Null Spaces and Homotopy
Localization}, Lecture Notes in Mathematics 1622, Springer,
Berlin, Heidelberg, 1996.
\bibitem{DFGS}
E. Dror Farjoun, R. G\"obel, Y. Segev,
Cellular covers of groups,
\emph{J. Pure Appl. Algebra} {\bf 208} (2007), 
61--76.
\bibitem{Dugger}
D. Dugger, Combinatorial model categories have presentations, \emph{Adv. Math.} {\bf 164} (2001), 177--201.
\bibitem{DI}
D. Dugger, D. C. Isaksen, Motivic cell structures, \emph{Algebr. Geom.  Topol.} {\bf 5} (2005), 615--652.
\bibitem{DRO}
B. I. Dundas, O. R\"ondigs, and P. A. {\O}stv{\ae}r, 
Motivic functors, \emph{Doc.\ Math.} {\bf 8} (2003), 489--525.
\bibitem{DHKS}
W. G. Dwyer, P. S. Hirschhorn, D. M. Kan, J. H. Smith,
\emph{Homotopy Limit Functors on Model Categories and
Homotopical Categories}, Mathematical Surveys and Monographs 113, American Mathematical Society, Providence, 2004.
\bibitem{DK3}
W. G. Dwyer, D. M. Kan, Calculating simplicial localizations, \emph{J. Pure Appl. Algebra} {\bf 18} (1980), 
17--35.
\bibitem{DK1}
W. G. Dwyer, D. M. Kan, Function complexes in homotopical algebra, \emph{Topology} {\bf 19} (1980), 
427--440.
\bibitem{EKMM}
A. D. Elmendorf, I. Kriz, M. A. Mandell, J. P. May, \emph{Rings, Modules, and Algebras in Stable Homotopy Theory}, Mathematical Surveys and Monographs 47, American Mathematical Society, Providence, 1997.
\bibitem{GKR}
S. Galatius, A. Kupers, O. Randal-Williams, Cellular $E_k$-algebras, arXiv:1805.07184v2.
\bibitem{GRS}
R. G\"obel, J. L. Rodr\'{\i}guez, L. Str\"ungmann, Cellular covers of cotorsion-free modules,
\emph{Fund. Math.} {\bf 217} (2012), 211--231.
\bibitem{Javier1}
J. J. Guti\'{e}rrez, Strict modules and homotopy modules in stable homotopy, \emph{Homology Homotopy Appl.} {\bf 7} (2005), 
39--49.
\bibitem{Javier2}
J. J. Guti\'{e}rrez, Homological localizations of
Eilenberg--Mac Lane spectra, \emph{Forum Math.} {\bf 22} (2010), 
349--356.
\bibitem{Javier3}
J. J. Guti\'{e}rrez, Transfer of algebras over operads along derived Quillen adjunctions, \emph{J. London Math. Soc.} {\bf 86} (2012), 607--625.
\bibitem{Javier4}
J. J. Guti\'errez, Cellularization of structures in stable homotopy categories,
\emph{Math. Proc. Cambridge Philos. Soc.} {\bf 153} (2012), 
399--418.
\bibitem{GRSO}
J. J. Guti\'errez, O. R\"ondigs, M. Spitzweck, P. A. {\O}stv{\ae}r, 
On functorial (co)localization of algebras and modules over operads, arXiv:1812.01715.
\bibitem{GV}
J. J. Guti\'errez, R. M. Vogt, A model structure for coloured operads in symmetric spectra, \emph{Math. Z.} {\bf 270} (2012), 223--239.
\bibitem{HKRS}
K. Hess, M. K\c{e}dziorek, E. Riehl, B. Shipley, A necessary and sufficient condition for induced model structures, \emph{J. Topology} {\bf 10} (2017), 324--369.
\bibitem{HS}
K. Hess, B. Shipley,
The homotopy theory of coalgebras over a comonad,
\emph{Proc. London Math. Soc.} {\bf 108} (2014), 484--516.
\bibitem{Hirschhorn}
P. S. Hirschhorn, \emph{Model Categories and Their Localizations},
Mathem\-atical Surveys and Monographs 99, American Mathematical Society, Providence, 2003.
\bibitem{Hovey}
M. Hovey, \emph{Model Categories}, Mathematical Surveys and
Monographs 63, American Mathematical Society, Providence, 1999.
\bibitem{HSS}
M. Hovey, B. Shipley, J. Smith, Symmetric spectra, \emph{J. Amer.
Math. Soc.} {\bf 13} (2000), 
149--208.
\bibitem{James}
I. M. James, Reduced product spaces, \emph{Ann. of Math.} {\bf 62} (1955), 170--197.
\bibitem{JN}
N. Johnson, J. Noel, Lifting homotopy $T$-algebra maps to strict maps,
\emph{Adv. Math.} {\bf 264} (2014), 593--645.
\bibitem{MacLane}
S. Mac Lane, \emph{Categories for the Working Mathematician},
Graduate Texts in Mathematics 5, Springer, New York, 1971.
\bibitem{May}
J. P. May, \emph{The Geometry of Iterated Loop Spaces}, Lecture Notes in Mathematics 271, Springer, Berlin, Heidelberg, 1972.
\bibitem{May2009}
J. P. May, What precisely are $E_{\infty}$ ring spaces and $E_{\infty}$ ring spectra?,
\emph{Geom. Topol. Monogr.} {\bf 16} (2009), 215--282.
\bibitem{Mislin}
G. Mislin, Localization with respect to $K$-theory, \emph{J. Pure Appl. Algebra} {\bf 10} (1977), 201--213.
\bibitem{Muro1}
F. Muro, Homotopy theory of nonsymmetric operads, \emph{Algebr. Geom. Topol.} {\bf 11} (2011), 1541--1599.
\bibitem{Muro2}
F. Muro, Homotopy theory of nonsymmetric operads II: Change of base category and left properness, \emph{Algebr. Geom. Topol.} {\bf 14} (2014), 229--281. 
\bibitem{Muro3}
F. Muro, Correction to the articles Homotopy theory of nonsymmetric operads, I-II, \emph{Algebr. Geom. Topol.} {\bf 17} (2017), 3837--3852.
\bibitem{Noel}
J. Noel, $H_{\infty}\ne E_{\infty}$, in: \emph{An Alpine Expedition through Algebraic Topology}, Contemporary Mathematics 617, American Mathematical Society, Providence, 2014, 237--240.
\bibitem{PS}
D. Pavlov, J. Scholbach, Admissibility and rectification of colored symmetric operads, \emph{J. Topology} {\bf 11}, 559--601.
\bibitem{Quillen}
D. G. Quillen, \emph{Homotopical Algebra}, Lecture Notes in Mathematics 43, Springer, Berlin, Heidelberg, 1967.
\bibitem{Ravenel}
D. C. Ravenel,
Localization with respect to certain periodic homology theories,
\emph{Amer. J. Math.} {\bf 106} (1984), 351--414.
\bibitem{Oriol}
O. Ravent\'os, The hammock localization preserves homotopies, \emph{Homology, Homotopy Appl.} {\bf 17} (2015), 191--204. 
\bibitem{RO}
O. R\"ondigs, and P. A. {\O}stv{\ae}r, Modules over motivic cohomology, \emph{Adv. Math.} {\bf 219} (2008), 689--727.
\bibitem{RT}
J. Rosick\'y, W. Tholen,
Left-determined model categories and universal homotopy theories,
\emph{Trans. Amer. Math. Soc.} {\bf 355} (2003), 3611--3623.
\bibitem{Schwede} S. Schwede, Symmetric Spectra, http://www.math.uni-bonn.de/people/schwede/SymSpec-v3.pdf (as of 12 April 2012)
\bibitem{SS}
S. Schwede, B. E. Shipley,
Algebras and modules in monoidal model categories,
\emph{Proc. London Math. Soc.} {\bf 80} (2000), 491--511.
\bibitem{Shipley}
B. E. Shipley, A convenient model category for commutative ring
spectra, in: \emph{Homotopy Theory; Relations with Algebraic Geometry,
Group Cohomology, and Algebraic $K$-theory}, Contemporary Mathematics 346,
American Mathematical Society, Providence, 2004, 473--483.
\bibitem{Shulman}
M. Shulman, Comparing composites of left and right derived functors, \emph{New York J. Math.} {\bf 17} (2011), 75--125.
\bibitem{Spitzweck}
M. Spitzweck, Operads, Algebras and Modules in Model
Categories and Motives, PhD Thesis, Universit\"at Bonn, 2001,
arXiv:math/0101102.
\bibitem{White}
D. White, Monoidal Bousfield localizations and algebras over operads,
arXiv:1404.5197.
\bibitem{WY1}
D. White, D. Yau, Bousfield localization and algebras over colored operads, \emph{Appl. Categ. Struct.} {\bf 26} (2018), 153--203.
\bibitem{WY2}
D. White, D. Yau, Right Bousfield localization and operadic algebras, arXiv:1512.07570.
\bibitem{WY3}
D. White, D. Yau, Right Bousfield localization and Eilenberg--Moore categories, arXiv:1609.03635.
\end{thebibliography}
\end{document}